\documentclass[11pt]{amsart}

\usepackage{amsmath, amssymb, amsthm, amsfonts, amsxtra, amscd}

\input xy
\xyoption{all}

\usepackage{hyperref}

\swapnumbers
\numberwithin{equation}{section}

\theoremstyle{plain}
\newtheorem{theorem}[subsubsection]{Theorem}
\newtheorem{lemma}[subsubsection]{Lemma}
\newtheorem{prop}[subsubsection]{Proposition}
\newtheorem{cor}[subsubsection]{Corollary}

\theoremstyle{definition}
\newtheorem{defn}[subsubsection]{Definition}

\newtheorem{remark}[subsubsection]{Remark}
\newtheorem{exam}[subsubsection]{Example}
\newtheorem{ex}[subsubsection]{Exercise}

%\setcounter{tocdepth}{1}

%%%%% page setup %%%%%

\setlength{\textwidth}{460pt}
\setlength{\oddsidemargin}{0pt}
\setlength{\evensidemargin}{0pt}
\setlength{\topmargin}{0pt}
\setlength{\textheight}{620pt}

%%%%%%%%% mathbb %%%%%%%%

\def\AA{\mathbb{A}}

\def\CC{\mathbb{C}}

\def\FF{\mathbb{F}}
\def\GG{\mathbb{G}}

\def\PP{\mathbb{P}}
\def\QQ{\mathbb{Q}}

\def\TT{\mathbb{T}}

\def\ZZ{\mathbb{Z}}

%%%%%%%% mathcal %%%%%%%%

\def\calE{\mathcal{E}}
\def\calF{\mathcal{F}}

\def\calI{\mathcal{I}}

\def\calK{\mathcal{K}}

\def\calM{\mathcal{M}}

\def\calO{\mathcal{O}}
\def\calP{\mathcal{P}}

\def\calV{\mathcal{V}}

\newcommand\cA{\mathcal{A}}

\newcommand\cE{\mathcal{E}}
\newcommand\cF{\mathcal{F}}

\newcommand\cH{\mathcal{H}}
\newcommand\cI{\mathcal{I}}

\newcommand\cK{\mathcal{K}}
\newcommand\cL{\mathcal{L}}

\newcommand\cO{\mathcal{O}}
\newcommand\cP{\mathcal{P}}

\newcommand\cV{\mathcal{V}}

%%%%%%%%% bold face %%%%%%%%%%

\def\bI{\mathbf{I}}

\def\bK{\mathbf{K}}

\def\bP{\mathbf{P}}

\def\bR{\mathbf{R}}

%%%%%%%% frak %%%%%%%%

\newcommand\frg{\mathfrak{g}}
\newcommand\frG{\mathfrak{G}}
\newcommand\tfrG{\widetilde{\mathfrak{G}}}

\newcommand\frk{\mathfrak{k}}
\renewcommand\frm{\mathfrak{m}}

%%%%%%%% tilde %%%%%%%%%

\newcommand\tilA{\widetilde{A}}

\newcommand\tilL{\widetilde{L}}

%%%%%%%% hat %%%%%%%%%

\def\dG{\widehat{G}}
\def\dT{\widehat{T}}

\def\dB{\widehat{B}}

%%%%%%%% check %%%%%%%%

%%%%%%%% standard operators %%%%%%%%

\newcommand\aff{\textup{aff}}

\newcommand\AS{\textup{AS}}

\newcommand{\Bun}{\textup{Bun}}
\newcommand{\chk}{\textup{char}(k)}

\newcommand\cont{\textup{cont}}

\newcommand{\Dyn}{\textup{Dyn}}
\newcommand\ev{\textup{ev}}

\newcommand\Frob{\textup{Frob}}

\newcommand\Gal{\textup{Gal}}

\newcommand\geom{\textup{geom}}

\newcommand{\Gr}{\textup{Gr}}

\newcommand\Hk{\textup{Hk}}
\newcommand\IC{\textup{IC}}
\newcommand\id{\textup{id}}

\newcommand{\ind}{\textup{ind}}

\newcommand\Kl{\textup{Kl}}
\newcommand\Lie{\textup{Lie}}
\newcommand\Loc{\textup{Loc}}

\newcommand\Perv{\textup{Perv}}
\newcommand{\Pic}{\textup{Pic}}
\newcommand\pr{\textup{pr}}

\newcommand\Rep{\textup{Rep}}

\newcommand\sep{\textup{sep}}
\newcommand\Span{\textup{Span}}
\newcommand\Spec{\textup{Spec}}
\newcommand\St{\textup{St}}

\newcommand\Stab{\textup{Stab}}

\newcommand\Sw{\textup{Sw}}

\newcommand{\Tr}{\textup{Tr}}

\newcommand{\Vect}{\textup{Vec}}

\newcommand\Aut{\textup{Aut}}
\newcommand\Hom{\textup{Hom}}

%%%%%%%% Lie groups and algebras %%%%%%%
\newcommand\GL{\textup{GL}}

\newcommand\PGL{\textup{PGL}}

\newcommand\SL{\textup{SL}}

\newcommand\SO{\textup{SO}}
\newcommand\Spin{\textup{Spin}}

\newcommand\Sp{\textup{Sp}}

\newcommand{\Gm}{\GG_m}
\def\Ga{\GG_a}

\newcommand{\Ad}{\textup{Ad}}
\renewcommand\sc{\textup{sc}}

\newcommand\xch{\mathbb{X}^*}
\newcommand\xcoch{\mathbb{X}_*}

\newcommand{\incl}{\hookrightarrow}
\newcommand{\isom}{\stackrel{\sim}{\to}}

\newcommand{\surj}{\twoheadrightarrow}

\newcommand{\Qlbar}{\overline{\QQ}_\ell}

\newcommand{\hotimes}{\widehat{\otimes}}

\renewcommand{\j}[1]{\langle{#1}\rangle}
\newcommand{\wt}[1]{\widetilde{#1}}
\newcommand{\wh}[1]{\widehat{#1}}
\newcommand\quash[1]{}
\newcommand\mat[4]{\left(\begin{array}{cc} #1 & #2 \\ #3 & #4 \end{array}\right)}  % 2-by-2 matrix

\newcommand{\bu}{\bullet}
\newcommand{\ov}{\overline}
\newcommand{\bs}{\backslash}
\newcommand{\tl}[1]{[\![#1]\!]}
\newcommand{\lr}[1]{(\!(#1)\!)}
\newcommand\sss{\subsubsection}
\newcommand\xr{\xrightarrow}
\newcommand\op{\oplus}
\newcommand\ot{\otimes}
\newcommand\bt{\boxtimes}
\newcommand\one{\mathbf{1}}

\renewcommand\c\circ
\newcommand\nth{^{\textup{th}}}

   % sheaf
  % plain group
    % sheaf
\newcommand{\cohog}[2]{\textup{H}^{#1}({#2})}     % plain group
     % compact support
  % Borel-Moore

\newcommand\upH{\textup{H}}

\newcommand{\oll}[1]{\overleftarrow{#1}}
\newcommand{\orr}[1]{\overrightarrow{#1}}

%%%%% Greek %%%%%

\renewcommand\a\alpha
\renewcommand\b\beta
\newcommand\g\gamma
\newcommand\G\Gamma
\renewcommand\d\delta
\newcommand\D\Delta
\newcommand{\e}{\epsilon}
\renewcommand{\th}{\theta}

\newcommand{\ph}{\varphi}
\renewcommand\r\rho
\newcommand{\s}{\sigma}
\renewcommand{\t}{\tau}

\newcommand{\y}{\eta}
\newcommand{\z}{\zeta}
\newcommand{\ep}{\epsilon}

\renewcommand{\l}{\lambda}
\renewcommand{\L}{\Lambda}
\newcommand{\om}{\omega}
\newcommand{\Om}{\Omega}
\newcommand{\Sig}{\Sigma}

%%%%%%% superscripts %%%%%%%%

\newcommand\na{\natural}

\newcommand{\diag}{\textup{diag}}
\newcommand{\Sat}{\textup{Sat}}

\newcommand\Gk{\Gal(\overline{k}/k)}

%% Bars 
\newcommand\Qbar{\overline{\QQ}}

\newcommand{\kbar}{\overline{k}}

\newcommand{\xbar}{\overline{x}}

% underlined

\newcommand\CS{\textup{CS}_{1}}

\newcommand\AG{G(F)\backslash G(\AA_{F})}
\renewcommand{\c}{\circ}

\newcommand\aud{automorphic datum}
\newcommand\gaud{geometric automorphic datum}
\newcommand\auda{automorphic data}

\newcommand\pt{\textup{pt}}
\newcommand\tilw{\widetilde{w}}
\newcommand\vn{\varnothing}

%%%%%%% Spaces %%%%%%%%%%

%opening

\title{Rigidity method for automorphic forms over function fields}
\author{Zhiwei Yun}
\thanks{Supported by the Simons Investigatorship and Packard Fellowship.}
\address{Department of Mathematics, Massachusetts Institute of Technology, 77 Massachusetts Ave, Cambridge, MA 02139}
\email{zyun@mit.edu}
\date{March 5, 2022}
\subjclass[2010]{11F70, 14D24, 14F05}
%\keywords{}

\begin{document}

%\begin{abstract}

%\end{abstract}

\maketitle

\setcounter{tocdepth}{1}

\tableofcontents

These are lectures given at the 2022 Arizona Winter School. It gives an introduction to the rigidity method for constructing automorphic forms for semisimple groups over function fields. The rigidity method leads to explicit constructions of local systems that are Langlands parameters of automorphic forms. The examples of local systems produced in this way have applications to algebraic geometry and number theory.

We will emphasize on the ``engineering'' aspect of the theory, leaving out most of the proofs:  we will give principles on how to design rigid \auda \  and methods for  computing the resulting local systems. For more details and proofs, we refer to \cite{Y-CDM}.

{\bf Acknowledgement}

I thank the organizers of the 2022 Arizona Winter School for inviting me to lecture,  and the University of Arizona and the Clay Institute of Mathematics for supporting my visit. I thank the participants, especially those in my project group, for attending my lectures and giving feedbacks. I am grateful to Benedict Gross and Konstantin Jakob for helpful comments.

\section{Automorphic forms over function fields}

In this section we recall some basic concepts for automorphic forms over a function field. 

\subsection{The setting} \label{ss:setting}

\subsubsection{Function field} Let $k$ be a finite field. Let $X$ be a projective, smooth and geometrically connected curve $X$ over $k$. Let $F=k(X)$ be the field of rational functions on $X$.

Let $|X|$ be the set of closed points of $X$ (places of $F$). For each $x\in |X|$, let $F_{x}$ denote the completion of $F$ at the place $x$. The valuation ring  and residue field of $F_{x}$ are denoted by $\calO_{x}$ and $k_{x}$. The maximal ideal of $\calO_{x}$ is denoted by $\frm_{x}$.  The ring of ad\`eles is the restricted product
\begin{equation*}
\AA_{F}:={\prod_{x\in |X|}}'F_{x}
\end{equation*}
where for almost all $x$, the $x$-component of an element $a\in \AA_{F}$ lies in $\calO_{x}$. There is a natural topology on $\AA_{F}$ making it a locally compact topological ring.

\subsubsection{Groups} For simplicity of the presentation, we restrict ourselves to the following situation: 
\begin{eqnarray*}
&&\mbox{\bf $G$ is a connected split reductive group over $k$;}\\
&&\mbox{\bf In addition, when talking about automorphic forms, $G$ is assumed to be semisimple.}
\end{eqnarray*}

We will fix a split maximal torus $T$ and a Borel subgroup $B\supset T$.  They give a based root system 
$$(\xch(T), \Phi(G,T), \D_{B}, \xcoch(T), \Phi^{\vee}(G,T), \D^{\vee}_{B})$$ 
where $\D_{B}$ are the simple roots and $\D_{B}^{\vee}$ the simple coroots. It makes sense to talk about $G(R)$ whenever $R$ is a $k$-algebra. For example, $G(F)$, $G(\calO_{x})$ and $G(F_{x})$, etc.

The Langlands dual group $\dG$ to $G$ is a connected reductive group over $\Qlbar$, equipped with a maximal torus $\dT$ and a Borel subgroup $\dB\supset \dT$, such that the corresponding based root system 
$$(\xch(\dT), \Phi(\dG,\dT), \Delta_{\dB}, \xcoch(\dT), \Phi^{\vee}(\dG,\dT), \D_{\dB}^{\vee})$$ 
is identified with the based root system $(\xcoch(T), \Phi^{\vee}(G,T), \Delta^{\vee}_{B}, \xch(T), \Phi(G,T), \D_{B})$ obtained from that of $G$ by switching roots and coroots.

\sss{Adelic and level groups}

The group $G(\AA_{F})$ of $\AA_{F}$-points of $G$ can also be expressed as the restricted product
\begin{equation*}
G(\AA_{F})={\prod_{x\in |X|}}'G(F_{x})
\end{equation*}
where most components lie in $G(\calO_{x})$.  This is a locally compact topological group. The diagonally embedded $G(F)$ inside $G(\AA_{F})$ is a discrete subgroup. The quotient $\AG$ is a locally compact space. 

Let $K_{x}\subset G(F_{x})$ be a compact open subgroup, one for each  $x\in |X|$, such that for almost all $x$, $K_{x}=G(\cO_{x})$. Let $K=\prod_{x} K_{x}$ be the compact open subgroup of $G(\AA_{F})$. The double coset space $\AG/K$ has the discrete topology.

\sss{Automorphic forms}
Let $\cA_{K}=C(\AG/K,\Qlbar)$ be the vector space of $\Qlbar$-valued functions on $G(\AA_{F})$ that are left invariant under $G(F)$ and right invariant under $K$. Let $\cA_{K,c}=C_{c}(\AG/K,\Qlbar)\subset \cA_{K}$ be the subspace that are supported on finitely many double cosets.

Elements in $\cA_{K,c}$ will be our working definition of (compactly supported) {\em automorphic forms for $G(\AA_{F})$ with level $K$}. For more precise definition, we refer to  \cite[Definition 5.8]{BorelJacquet}.

\sss{Action of the Hecke algebra}
Let $\cH_{K}$ be the vector space of $\Qlbar$-valued, compactly supported functions on $G(\AA_{F})$ that are left and right invariant under $K$. Similarly one can define the local Hecke algebra $\cH_{K_{x}}$. The vector space $\cH_{K}$ (resp. $\cH_{K_{x}}$) has an algebra structure under convolution, such that the characteristic function of $\one_{K}$ (resp. $\one_{K_{x}}$) is the unit element. Then $\cH_{K}$ is the restricted tensor product of the local Hecke algebras $\cH_{K_{x}}$ with respect to their algebra units. The algebra $\cH_{K}$ acts on $\cA_{K}$ and $\cA_{K,c}$ by the rule:
\begin{equation*}
(f\star h)(x)=\sum_{g\in G(\AA_{F})/K} f(xgK)h(Kg^{-1}K), \quad \forall f\in \cA_{K}, h\in \cH_{K}.
\end{equation*}

\begin{ex} Give a formula for the algebra structure of $\cH_{K}$.
\end{ex}

%\subsubsection{Automorphic forms}
%
%\begin{defn}[See {\cite[Definition 5.8]{BorelJacquet}}]
%\begin{enumerate}
%\item A function $f \in C^{\infty}(\AG,\Qlbar)$ is called an {\em automorphic form} if for some (equivalently any) $x\in |X|$, the $G(F_{x})$-module spanned by right $G(F_{x})$-translations on $f$ is admissible. Denote the space of automorphic forms by $\calA_{G}$. This is a $G(\AA_{F})$-module under right translation.
%\item An irreducible representation $\pi$ of $G(\AA_{F})$ is an {\em automorphic representation} if it is a subquotient of $\calA_{G}$.
%\end{enumerate}
%\end{defn}

\subsubsection{Cusp forms} As our working definition, a  {\em cusp form} for $G(\AA_{F})$ with level $K$ is an element $f\in \cA_{K,c}$ that generates a finite-dimensional $\cH_{K}$-submodule. The official definition of a cusp form uses the vanishing of constant terms and looks quite different. It is shown in \cite[Proposition 8.23]{VLaff} that these two notions are equivalent. 

A cusp form $f\in \cA_{K,c}$ is called a {\em Hecke eigenform} if it is an eigenfunction for $\cH_{K_{x}}$, for almost all $x\in |X|$.

%A function $f\in C^{\infty}(\AG,\Qlbar)$ is a {\em cusp form} if for every parabolic subgroup $P\subset G$ defined over $F$ with unipotent radical $N_{P}$, we have
%\begin{equation*}
%\int_{N_{P}(F)\backslash N_{P}(\AA_{F})}f(ng)dn=0
%\end{equation*}
%for all $g\in G(\AA_{F})$. All cusp forms have compact support in $\AG$. The space of cusp forms is a sub-$G(\AA_{F})$-module $\calA^{\cusp}_{G}$ of all automorphic forms $\calA_{G}$. It is known that $\calA^{\cusp}_{G}$ decomposes discretely into a direct sum of irreducible $G(\AA_{F})$-modules, called {\em cuspidal automorphic representations}.

\subsection{Weil's interpretation}\label{ss:Weil} We allow $G$ to be  reductive in this subsection. Let $K^{\na}=\prod_{x}G(\cO_{x})$ in the following discussion.

\sss{Case of $\GL_{n}$}
When $G=\GL_{n}$,  Weil has given a geometric interpretation of the double coset $\AG/K^{\na}$ in terms of vector bundles of rank $n$ over $X$.  Let $\Vect_{n}(X)$ be the groupoid of rank $n$ vector bundles over $X$. Let us give the map $e: G(\AA_{F})\to \Vect_{n}(X)$.  First, for any finite subset $S\subset |X|$, we shall define a map $e_{S}: \prod_{x\in S}G(F_{x})\to \Vect_{n}(X)$ as follows. Let $j:X-S\incl X$ be the open  inclusion. Let  $(g_{x})_{x\in S}\in \prod_{x\in S}G(F_{x})$ and let $\L_{x}\subset F_{x}^{\op n}$ be the $\cO_{x}$-submodule $g_{x}\cO_{x}^{\op n}$. Define $e_{S}((g_{x})_{x\in S})$ to be the quasi-coherent subsheaf  $\cV\subset j_{*}\cO_{X-S}^{\op n}$ such that, for any affine open $U\subset X$,
\begin{equation*}
\G(U,\cV)=\G(U-S, \cO_{X}^{\op n})\cap (\prod_{x\in |U|\cap S}\L_{x}).
\end{equation*}
Here the intersection is taken inside $\prod_{x\in |U|\cap S}F^{\op n}_{x}$, and $\G(U- S, \cO_{X}^{\op n})$ is diagonally embedded into it by taking completion at each $x\in |U|\cap S$.

\begin{ex}
\begin{enumerate}
\item Show that $\cV$ constructed above is a vector bundle of rank $n$.
\item Suppose $S\subset S'$, $g_{x}\in G(F_{x})$ for each $x\in S$, and $g_{x}\in G(\cO_{x})$ for each $x\in S'- S$. Let $g_{S}=(g_{x})_{x\in S}$ and $g_{S'}=(g_{x})_{x\in S'}$. Then there is a canonical isomorphism $e_{S}(g_{S})=e_{S'}(g_{S'})$. Using this to show the $e_{S}$ for various $S$ give a well-defined map $e:G(\AA_{F})\to \Vect_{n}(X)$.
\item Show that $e$ is left invariant under $G(F)$ and right invariant under $K^{\na}$.
\item Show that $e$ is an equivalence of groupoids.
\end{enumerate}
\end{ex}

\sss{General $G$}
For general $G$, there is a similar interpretation of $\AG/K^{\na}$ in terms of $G$-torsors over $X$. Recall that a (right) $G$-torsor over $X$ is a scheme $Y\to X$ together with a fiberwise action of $G$ that looks like $G\times X$ (with $G$ acting on itself by right translation) \'etale locally over $X$. An isomorphism between $G$-torsors $Y$ and $Y'$ is a $G$-equivariant isomorphism $Y\cong Y'$ over $X$.

\begin{ex} When $G=\GL_{n}$, show that there is an equivalence of categories between $G$-torsors over $X$ and vector bundles of rank $n$ on $X$. You will need descent theory to show that \'etale local triviality is the same as Zariski local triviality for sheaves of $\cO_{X}$-modules.

Similarly, $\SL_{n}$-bundles are the same data as pairs $(\calV,\iota)$ where $\calV$ is a vector bundle of rank $n$ over $X$ and $\iota:\wedge^{n}\calV\isom\calO_{X}$ is a trivialization of the determinant of $\calV$. 
\end{ex}

%\begin{exam} When $G=\GL_{n}$, there is a natural way to assign a vector bundle $\calV$ of rank $n$ over $X$ to a $\GL_{n}$-torsor $\calE$ and vice versa: $\calV=\calE\twtimes{\GL_{n}}\AA^{n}_{k}$ and $\calE=\underline{\Isom}_{X}(\calO_{X}^{n},\calV)$. Therefore $\Bun_{\GL_{n}}$ is  isomorphic to the algebraic stack $\Bun_{n}$ classifying rank $n$ vector bundles over $X$. In particular, for $n=1$, $\Bun_{\GL_{1}}\cong\Bun_{1}\cong\Pic(X)$.

\begin{ex} Show that $\PGL_{n}$-torsors over $X$ are the same as rank $n$ vector bundles over $X$ modulo tensoring with line bundles. For this,  you will need the fact that the Brauer group of $X$ is trivial.
\end{ex}

\begin{ex} For $G=\Sp_{2n}$ and $\SO_{n}$, give an interpretation of $G$-torsors using vector bundles with bilinear forms.
\end{ex}

Let $\Bun_{G}(k)$ be the groupoid of $G$-torsors over $X$: this is a category whose objects are $G$-torsors over $X$ and morphisms are isomorphisms between $G$-torsors. The groupoid $\Bun_{G}(k)$ is in fact the groupoid of $k$-points of an algebraic stack $\Bun_{G}$. For a $k$-algebra $R$ the groupoid $\Bun_{G}(R)$ is the groupoid of $G$-torsors over $X\otimes_{k}R$. 

Weil observed that there is a natural equivalence of groupoids
\begin{equation}\label{e}
e:\AG/K^{\na}\isom\Bun_{G}(k).
\end{equation}
So this is not just a bijection of sets,  but for any double coset $[g]=G(F)gK^{\na}$, the automorphism group of $e([g])$ (as a $G$-torsor) is isomorphic to $G(F)\cap gK^{\na}g^{-1}$.
 
The construction of $e$ is similar to the case of $G=\GL_{n}$, using modification of the trivial $G$-torsor at finitely many points $S$ given by $(g_{x})_{x\in S}\in \prod_{x\in S}G(F_{x})$.

%We give the definition of the map $e$ on the level of sets. For any finite $S\subset |X|$, the double coset $G(\calO_{X-S})\backslash \prod_{x\in S}G(F_{x})/G(\calO_{x})$ is the subset of $G(F)\backslash G(\AA_{F})/K^{\na}$ consisting of the those classes represented by $g=(g_{x})\in G(\AA_{F})$ with $g_{x}\in G(\calO_{x}),\forall x\notin S$. Clearly,
%\begin{equation*}
%G(F)\backslash G(\AA_{F})/\prod_{x\in |X|}G(\calO_{x})=\bigcup_{S\subset|X|,\textup{finite}}G(\calO_{X-S})\backslash \prod_{x\in S}G(F_{x})/G(\calO_{x}).
%\end{equation*}
%Thus it suffices to construct a compatible system of maps
%\begin{equation*}
%e_{S}:G(\calO_{X-S})\backslash \prod_{x\in S}G(F_{x})/G(\calO_{x})\to \Bun_{G}(k).
%\end{equation*}
%For $g=(g_{x})\in \prod _{x\in S}G(F_{x})$, we take the trivial $G$-torsor $\calE^{\triv}_{X-S}=(X-S)\times_{k} G$ over $X-S$ and glue it with the trivial $G$-torsor $\calE^{\triv}_{x}=\Spec\calO_{x}\times G$ over $\Spec\calO_{x}$ for each $x\in S$ along $\Spec F_{x}$. The gluing means to give an isomorphism $\calE^{\triv}_{x}|_{\Spec F_{x}}\isom\calE^{\triv}_{X-S}|_{\Spec F_{x}}$, which is given by $g_{x}$. Changing the trivializations of $\calE^{\triv}_{X-S}$ and $\calE^{\triv}_{x}$ amounts to right multiply $g_{x}$ by an element in $G(\calO_{x})$ and left multiply $g_{x}$ by an element in $G(\calO_{X-S})$. The isomorphism type of the resulting $G$-torsor $\calE_{g}$ after gluing only depends on the double coset $G(\calO_{X-S})g\prod_{x\in S}G(\calO_{x})$, and we define $e_{S}(g)$ to be $\calE_{g}$.

\sss{Birkhoff decomposition} We consider the case $X=\PP^{1}$. 
Grothendieck proves that every vector bundle on $\PP^{1}$ is isomorphic to a direct sum of line bundles $\op_{i}\cO(n_{i})$, and the multiset $\{n_{i}\}$ is well-defined. This implies that the underlying set of $\Bun_{\GL_{n}}(k)$ is in bijection with the $\ZZ^{n}/S_{n}$. 

More generally, there is a canonical bijection of sets 
$$|\Bun_{G}(k)|\cong |\xcoch(T)/W|.$$

By the bijection \eqref{e}, we see that $|\AG/K^{\na}|$ is in bijection with $\xcoch(T)/W$.  We can construct the bijection as follows. 

\begin{ex} Let $t$ be an affine coordinate of $\AA^{1}\subset \PP^{1}$.
\begin{enumerate}
\item Let $G=T$ and $\l\in \xcoch(T)$. Viewing $\l$ as a homomorphism $\l:\Gm\to T$, and $t\in F_{0}$ (local field at $0\in |\PP^{1}|$), let $t^{\l}:=\l(t)\in T(F_{0})$. Now view $t^{\l}$  as an element in $T(\AA_{F})$ that is $t^{\l}$ at the place $0$ and $1$ elsewhere. The assignment $\l\to t^{\l}$ defines a homomorphism $\xcoch(T)\to T(\AA_{F})$. Show that it induces a bijection $\xcoch(T)\isom |T(F)\bs T(\AA_{F})/K^{\na}_{T}|$.

\item For general $G$, the above construction gives a map $\xcoch(T)\to |T(F)\bs T(\AA_{F})/K^{\na}_{T}| \to |\AG/K^{\na}|$. Show that this map is $W$-invariant,  and it induces a bijection between the set of $W$-orbits on $\xcoch(T)$ and the underlying set of $\AG/K^{\na}$.
\end{enumerate}

\end{ex}

\sss{Interpretation of Hecke operators as modifications}
By Weil's interpretation, $\cA_{K^{\na},c}$ can be identified with the space of $\Qlbar$-valued points on the set of isomorphisms classes of $G$-torsors over $X$, i.e.,
\begin{equation*}
\cA_{K^{\na}}=C(\Bun_{G}(k), \Qlbar).
\end{equation*}

Fix $x\in |X|$ and let $K_{x}=G(\cO_{x})$. The algebra $\cH_{K_{x}}$ is called the {\em spherical Hecke algebra} for the group $G(F_{x})$. It has a $\Qlbar$-basis consisting of characteristic functions $\one_{K_{x}gK_{x}}$ of double cosets.  We know that $\cH_{K_{x}}$ acts on $\cA_{K^{\na}}$. We give an interpretation of the action of $\one_{K_{x}gK_{x}}$ on $\cA_{K^{\na}}=C(\Bun_{G}(k), \Qlbar)$ in terms of $G$-torsors.

Take the example $G=\GL_{n}$ and 
$$g_{i,x}=\diag(\underbrace{t_{x},\cdots, t_{x}}_{i}, \underbrace{1,\cdots, 1}_{n-i})$$
($t_{x}$ is a uniformizer at $x$). For two vector bundles $\cE$ and $\cE'$ of rank $n$ on $X$, we use the notation $\cE\xr[x]{i}\cE'$ to mean that $\cE\subset \cE'\subset \cE(x)$ and  $\dim_{k_{x}}(\cE'/\cE)=i$. Then for any $f\in \cA_{K^{\na}}$, viewed as a function on $\Vect_{n}(X)\cong \Bun_{\GL_{n}}(k)$, we have
\begin{equation*}
(f\star\one_{K_{x}g_{i,x}K_{x}})(\cE)=\sum_{ \cE\xr[x]{i}\cE'} f(\cE').
\end{equation*}

\subsection{Level structures}

\sss{Parahoric subgroups} Let $x\in |X|$. Let $I\subset G(\cO_{x})$ be the preimage of a Borel subgroup $B(k_{x})\subset G(k_{x})$ under the reduction map $G(\cO_{x})\to G(k_{x})$. This is an example of an Iwahori subgroup of $G(F_{x})$. General Iwahori subgroups are conjugates of $I$ in $G(F_{x})$. Parahoric subgroups  of $G(F_{x})$ always contain an Iwahori subgroup with finite index.  A precise definition of parahoric subgroups involves a fair amount of Bruhat-Tits theory \cite{BT}. The conjugacy classes of parahoric subgroups under $G^{\sc}(F_{x})$ ($G^{\sc}$ is the simply-connected cover of $G$) are in bijection with
proper subsets of the vertices of the extended Dynkin diagram $\wt\Dyn(G)$ of $G$. Empty subset corresponds to Iwahori subgroups. The set of vertices of the finite Dynkin diagram corresponds to conjugates of $G(\cO_{x})$. 

Extended Dynkin diagram of $G$ is obtained from the Dynkin diagram of $G$ by adding one vertex corresponding to the affine simple root $\a_{0}$, which we mark as a black dot:
\begin{equation*}
\xymatrix{\wt \Dyn(A_{n}) & & & \bu\ar@{-}[dll]\ar@{-}[drr]\\
& \c\ar@{-}[r] & \c\ar@{-}[r] & \cdots \ar@{-}[r]  & \c\ar@{-}[r] &  \c
}
\end{equation*}
\begin{equation*}
\xymatrix{\wt \Dyn(B_{n}) & \c\ar@{-}[dr]\\
& & \c\ar@{-}[r] & \c\ar@{-}[r] & \cdots\ar@{-}[r] & \c\ar@2{->}[r] & \c\\
& \bu\ar@{-}[ur]
}
\end{equation*}
\begin{equation*}
\xymatrix{\wt \Dyn(C_{n})& \bu\ar@2{->}[r] &\c\ar@{-}[r] &\cdots\ar@{-}[r] &\c\ar@2{<-}[r] &\c
}
\end{equation*}
\begin{equation*}
\xymatrix{\wt \Dyn(D_{n})&\c\ar@{-}[dr] &&&&& \c\\
& & \c\ar@{-}[r] & \c\ar@{-}[r] & \cdots\ar@{-}[r] & \c\ar@{-}[ur]\ar@{-}[dr] \\
& \bu\ar@{-}[ur] &&&&& \c
}
\end{equation*}
\begin{equation*}
\xymatrix{
\wt \Dyn(E_{6})& \c\ar@{-}[r] &\c\ar@{-}[r] &\c\ar@{-}[r]\ar@{-}[d] &\c\ar@{-}[r] & \c\\
&&& \c\ar@{-}[d]\\
&&& \bu}
\end{equation*}
\begin{equation*}
\xymatrix{\wt \Dyn(E_{7})& \bu\ar@{-}[r] &\c\ar@{-}[r] &\c\ar@{-}[r] &\c\ar@{-}[r]\ar@{-}[d] &\c\ar@{-}[r] & \c\ar@{-}[r] &\c\\
&&&& \c
}
\end{equation*}
\begin{equation*}
\xymatrix{\wt \Dyn(E_{8})& \c\ar@{-}[r] &\c\ar@{-}[r] &\c\ar@{-}[r]\ar@{-}[d] &\c\ar@{-}[r] &\c\ar@{-}[r] &\c\ar@{-}[r] &\c\ar@{-}[r] &\bu\\
&&& \c
}
\end{equation*}
\begin{equation*}
\xymatrix{\wt \Dyn(F_{4})&\bu\ar@{-}[r] &\c\ar@{-}[r] &\c\ar@2{->}[r] &\c\ar@{-}[r] & \c
}
\end{equation*}
\begin{equation*}
\xymatrix{\wt \Dyn(G_{2})& \bu\ar@{-}[r] &\c\ar@3{->}[r] & \c
}
\end{equation*}

\begin{exam}\label{ex:SLn par} For $G = \SL_{n}$, we can understand parahoric subgroups using chains of lattices. A lattice in $F^{n}_{x}$ is an $\cO_{x}$-submodule of rank $n$. The relative dimension of a lattice $\L$ is defined to be 
\begin{equation*}
\dim[\L:\cO^{n}_{x}]=\ell_{x}(\L/\L\cap \cO^{n}_{x})-\ell_{x}(\cO^{n}_{x}/\L\cap\cO^{n}_{x})
\end{equation*}
where $\ell_{x}(-)$ is the length of a $\cO_{x}$-module (since we are in the function field case, $\ell_{x}(-)=\dim_{k_{x}}(-)$). A {\em periodic lattice chain} is a chain of lattices  $\L_{\bu}=\{\L_{i}\}_{i\in I}$ in $F^{n}_{x}$ indexed by a non-empty subset of $I\subset \ZZ$ such that
\begin{itemize} 
\item If $i<j$ then $\L_{i}\subset \L_{j}$;
\item The relative dimension of $\L_{i}$ is $i$.
\item $I$ is stable under translation by $n\ZZ$, and $\L_{i-n}=\frm_{x}\L_{i}$.
\end{itemize}
To a periodic lattice chain $\L_{\bu}$, its simultaneous stabilizer
\begin{equation*}
P_{\L_{\bu}}=\{g\in \SL_{n}(F_{x})|g\L_{i}=\L_{i}, \forall i\in I\}
\end{equation*}
is a parahoric subgroup of $\SL_{n}(F_{x})$. Conversely, every parahoric subgroup of $\SL_{n}(F_{x})$ arises this way for a unique periodic lattice chain $\L_{\bu}$.

A periodic  lattice chain is {\em complete} if $I=\ZZ$. 
Iwahori subgroups of $\SL_{n}(F_{x})$ are precisely the stabilizers of complete periodic  lattice chains.
\end{exam}

\begin{ex} Assume $\chk\ne2$.  Use lattice chains to describe parahoric subgroups of $G(F_{x})$ when $G=\Sp(V)$ or  $\SO(V)$, where $V$ is a vector space over $F_{x}$ equipped with a symplectic form or symmetric bilinear form $\j{\cdot,\cdot}$. For a lattice $\L\subset V$, denote $\L^{\vee}=\{v\in V|\j{v,\L}\subset \cO_{x}\}$, another lattice in $V$.
\begin{enumerate}
\item Let $G=\Sp(V)$. Let $\L\subset V$ be a lattice such that $\frm_{x}\L^{\vee}=\L$. Then its stabilizer $P_{\L}$ is a parahoric subgroup of $G(F_{x})$ whose conjugacy class corresponds  to the complement of the right end of $\wt\Dyn(G)$.  Show that $\L^{\vee}/\L$ as a $k_{x}$-vector space carries a canonical symplectic form, and give a canonical surjection $P_{\L}\to \Sp(\L^{\vee}/\L)$.

\item  The same construction ``works'' for $\SO(V)$ when $\dim V=2n$.   But to which subdiagram of $\wt\Dyn(G)$ does $P_{\L}$ correspond?

\item Let $G=\SO(V)$ and $\dim V=2n+1$. Let $\L\subset V$ be a lattice such that $\frm_{x}\L^{\vee}\subset \L$ with quotient of length $1$. Then its stabilizer $P_{\L}$ is a parahoric subgroup corresponding to the complement of the right end of $\wt\Dyn(G)$.  Show that $\L^{\vee}/\L$ as a $k_{x}$-vector space carries a canonical symmetric bilinear form, and give a canonical surjection $P_{\L}\to \SO(\L^{\vee}/\L)$.
\end{enumerate}
\end{ex}

\sss{Loop groups}

Later we shall also need to view $G(F_{x})$ and $G(\calO_{x})$ as infinite-dimensional groups over $k$. More precisely, for a $k_{x}$-algebra $R$ we let $L_{x}G(R)=G(R\hotimes_{k_{x}}F_{x})$ and let $L^{+}_{x}G(R)=G(R\hotimes_{k_{x}}\calO_{x})$. Here $R\hotimes_{k_{x}} F_{x}$ or $R\hotimes_{k_{x}} \calO_{x}$ means the completion of the tensor product with respect to the $\frm_{x}$-adic topology on $F_{x}$ or $\calO_{x}$.  If we choose a uniformizer $t_{x}$ of $F_{x}$, then $L_{x}G(R)=G(R\lr{t_{x}})$ and $L^{+}_{x}G=G(\tl{t_{x}})$. The functor $L_{x}G$ (resp. $L^{+}_{x}G$) is representable by a group indscheme (resp.  group scheme of infinite type) over $k_{x}$. There is a reduction map $L_{x}^{+}G\to G\otimes_{k}k_{x}$.

More generally, any parahoric subgroup of $G(F_{x})$ can be equipped with the structure of a pro-algebraic group (an inverse limit of algebraic groups).  Bruhat and Tits \cite{BT} constructed certain smooth models $\calP$ of $G$ over $\calO_{x}$  called Bruhat-Tits group schemes. The parahoric subgroups are exactly of the form $\cP(\cO_{x})$ for Bruhat-Tits group schemes $\cP$.  To such $\cP$ we can attach a pro-algebraic subgroup $\bP\subset L_{x}G$ whose $R$-points is $\calP(R\hotimes_{k_{x}}\calO_{x})$. Then $k_{x}$-points $\bP(k_{x})$ of $\bP$ is a parahoric subgroup of $G(F_{x})$. 

Viewed as a pro-algebraic group, the parahoric subgroup $\bP\subset L_{x}G$ has a maximal reductive quotient $L_{\bP}$ which is a connected reductive group over $k_{x}$. We denote $\bP^{+}=\ker(\bP\to L_{\bP})$ its pro-unipotent radical.

If the  conjugacy class  of $\bP$ corresponds to the subset $J$ of the vertices the extended Dynkin diagram $\wt\Dyn(G)$ of $G$, then the Dynkin diagram of $L_{\bP}$ is given by the subdiagram $\wt\Dyn(G)$ spanned by $J$.

For example,  the preimage $\bI_{x}$ of $B\otimes_{k}k_{x}$ under the reduction map $L_{x}^{+}G\to G\otimes_{k}k_{x}$ is called the {\em standard Iwahori subgroup} of $L_{x}G$ (with respect to $B$),  and any parahoric subgroup of $L_{x}G$ contains a conjugate of $\bI_{x}$.

\subsubsection{$\Bun_{G}$ with level structures}\label{sss:level str} One can generalize $\Bun_{G}$ to $G$-torsors with level structures. Fix a finite set $S\subset |X|$, and for each $x\in S$ let $\bK_{x}\subset L_{x}G$ be a proalgebraic subgroup commensurable with $L_{x}^{+}G$. Then we may talk about $G$-torsors over $X$ with $\bK_{S}$-level structures: these are $G$-torsors $\calE$ over $X$ together with trivializations $\iota_{x}:\calE|_{\Spec\calO_{x}}\cong G|_{\Spec\calO_{x}}$ (for each $x\in S$) up to left multiplication by $\bK_{x}$ (via  the intuitive action if $\bK_{x}\subset L_{x}^{+}G$, but it requires some thought to define the action in general). We shall denote the corresponding moduli stack by $\Bun_{G}(\bK_{S})$. 

Let $K_{x}=\bK_{x}(k_{x})\subset G(F_{x})$ be the corresponding compact open subgroup. Then the isomorphism \eqref{e} generalizes to an equivalence of groupoids
\begin{equation}\label{Weil Bun general}
G(F)\backslash G(\AA_{F})/(\prod_{x\notin S}G(\calO_{x})\times\prod_{x\in S}K_{x})\isom\Bun_{G}(\bK_{S})(k).
\end{equation}

\begin{ex} Let $G=\GL_{n}$ and let $\bK_{x}$ be an Iwahori subgroup of $L_{x}G$ for each $x\in S$. Give an interpretation of $\Bun_{G}(\bK_{S})(k)$ in terms of vector bundles on $X$ with extra structure along $S$.
\end{ex}

\section{Rigid automorphic data}

We will define the notion of an automorphic datum and its geometric version. We will upgrade the notion of automorphic functions to automorphic sheaves, and define what it means for such a \gaud\ to be rigid.

\subsection{Automorphic data}
Let $Z\subset G$ be the center of $G$. This is a finite group scheme over $k$.
\begin{defn} Let $S \subset |X|$ be finite.  An {\em automorphic datum} for $G$ unramified outside $S$ is a collection $\{(K_{x}, \chi_{x})\}_{x\in S}$ where
\begin{itemize}
%\item $\om_{x}: Z(F_{x})\to\Qlbar^{\times}$ is a character;
\item $K_{x}$ is a compact open subgroup of $G(F_{x})$;
\item $\chi_{x}: K_{x}\to \Qlbar^{\times}$ is a finite order character.
\end{itemize}
We denote the automorphic datum by $(K_{S}, \chi_{S})$.
\end{defn}

%They must satisfy the following conditions:
%\begin{eqnarray}
%\label{om prod} \prod_{x\in S}\om_{x}(z)=1, \quad \forall z\in Z(F)=Z(k),\\
%\label{om chi} \omega|_{Z(F_{x})\cap K_{x}}=\chi_{x}|_{Z(F_{x})\cap K_{x}}, \quad \forall x\in S.
%\end{eqnarray}

\begin{defn}\label{def:cen} Let $\omega:Z(F)\backslash Z(\AA_{F})\to\Qlbar^{\times}$ be a central character.  and $(K_{S},\chi_{S})$ be an \aud. We say $\om$ is {\em compatible with } $(K_{S},\chi_{S})$ if 
$\om|_{Z(F_{x})}=1$ for $x\notin S$ and $\om|_{Z(F_{x})\cap K_{x}}=\chi_{x}|_{Z(F_{x})\cap K_{x}}$ for $x\in S$.
\end{defn}

\begin{defn} Let $(K_{S},\chi_{S})$ be an automorphic datum. A $\Qlbar$-valued function $f$ on $\AG$ is called {a \em $(K_{S},\chi_{S})$-typical automorphic form} if
\begin{enumerate}
%\item For $x\in S$ and $z_{x}\in Z(F_{x})$, $f(gz_{x})=\om_{x}(z_{x})f(g)$ for all $g\in \AG$. 
\item For every $x\in S$ and $h_{x}\in K_{x}$,  $f(gh_{x})=\chi_{x}(h_{x})f(g)$ for all $g\in \AG$. 
\item For every $x\notin S$, $f$ is right $G(\cO_{x})$-invariant. 
\end{enumerate}
\end{defn}

Clearly, if $f$ is a nonzero $(K_{S}, \chi_{S})$-typical automorphic form that is eigen under the center $Z(\AA_{F})$ with central character $\omega$, then $\om$ has to be compatible with $(K_{S}, \chi_{S})$.

Let $\cA(K_{S},\chi_{S})$ be the $\Qlbar$-vector space of $(K_{S},\chi_{S})$-typical automorphic forms. Similarly we have the space $\cA_{c}(K_{S},\chi_{S})$ of compactly supported $(K_{S},\chi_{S})$-typical automorphic forms. 

Let $K^{+}_{x}=\ker(\chi_{x})$ for $x\in S$. If we define
\begin{equation*}
K^{+}=\prod_{x\in S}K^{+}_{x}\times \prod_{x\notin S}G(\cO_{x}),
\end{equation*}
then we have $\cA(K_{S},\chi_{S})\subset \cA_{K^{+}}$ and $\cA_{c}(K_{S},\chi_{S})\subset \cA_{K^{+},c}$.

\begin{exam}\label{ex:SL2 3pt} Let $X=\PP^{1}$,  $G=\SL_{2}$ and $S=\{0,1,\infty\}$. For $x\in S$, we let $K_{x}=I_{x}$ be the standard Iwahori subgroup of $G(F_{x})$, i.e.,
\begin{equation*}
I_{x}=\left\{\mat{a}{b}{c}{d}\in G(\cO_{x})\Big|c\in\frm_{x}\right\}.
\end{equation*}
For each $x\in S$, we choose a character $\chi_{x}:k^{\times}\to \Qlbar^{\times}$ and view it as a character of $I_{x}$ by sending the above matrix to $\chi_{x}(\overline{a})$, where $\overline{a}\in k^{\times}$ is the image of $a\in\calO_{x}^{\times}$. 

A central character compatible with $(K_{S}, \chi_{S})$ exists if and only if
\begin{equation}\label{det SL2}
\prod_{x\in S}\chi_{x}(-1)=1,
\end{equation}
in which case it is unique: take $\om_{x}=\chi_{x}|_{Z(F_{x})}$ for $x\in S$ and trivial otherwise. 
\end{exam}

\begin{exam}[Kloosterman \aud]\label{ex:Kl} In \cite{HNY}, an \aud\  for $X=\PP^{1}$ is introduced for any split reductive group and some quasi-split ones, which provided the first examples of rigid automorphic data. Below let $S=\{0,\infty\}$ and assume $G$ to be semisimple and split. 

At $0$ we take $K_{0}$ to be an Iwahori subgroup with quotient torus identified with $T$.  Let $\chi_{0}: T(k)\to \Qlbar^{\times}$ be a character, and view it as a character of $K_{0}$ via $K_{0}\to T(k)$. 

At $\infty$, let $\t=t^{-1}$, a uniformizer of $F_{\infty}\cong k\lr{\t}$. Let $I_{\infty}\subset G(\cO_{\infty})$ be the Iwahori subgroup that is the preimage of $B(k)$ under the reduction map $G(\cO_{\infty})\to G(k)$. Again we can have a natural map $I_{\infty}\to B(k)\to T(k)$; let $I^{+}_{\infty}$ be its kernel. We take $K_{\infty}=I^{+}_{\infty}$.

Let $\D_{B}=\{\a_{1},\cdots, \a_{r}\}$ be simple roots of $G$ with respect to $B$ and $T$. Let $U_{\a_{i}}\subset G$ be the root subgroups, each isomorphic to $\Ga$. Let $-\th\in \Phi(G,T)$ be the lowest root, and $U_{-\th}$ be the root subgroup. Let $U_{\a_{0}}$ be the additive group whose $k$-points are $U_{-\th}(\t \cO_{\infty})/U_{-\th}(\t^{2} \cO_{\infty})$.  There is a canonical surjective map
\begin{equation}\label{proj to roots}
\pr: K_{\infty}=I^{+}_{\infty}\surj\prod_{i=0}^{r}U_{\a_{i}}(k).
\end{equation}
We denote the kernel of this map by $I^{++}_{\infty}$.

To define $\chi_{\infty}$, pick an isomorphism $U_{\a_{i}}\cong \Ga$, and take the sum of these maps to give a homomorphism
\begin{equation*}
\ph: \prod_{i=0}^{r}U_{\a_{i}}(k)\to \Ga(k)=k.
\end{equation*}
Finally, compose with a nontrivial additive character $\psi$ of $k$ to get a character
\begin{equation*}
\chi_{\infty}: K_{\infty}\xr{\pr} \prod_{i=0}^{r}U_{\a_{i}}(k)\xr{\ph} k\xr{\psi} \Qlbar^{\times}.
\end{equation*}
For example, when $G=\SL_{2}$, we have
\begin{equation*}
I_{\infty}^{+}=\left\{\mat{a}{b}{c}{d}\Big|a,d\in 1+\t\cO_{\infty}, b\in \cO_{\infty}, c\in \t\cO_{\infty}\right\}
\end{equation*}
and, after identifying $U_{\a_{0}}$ and $U_{\a_{1}}$ with $\Ga$ in an obvious way, the map $\pr$ maps the above matrix to $(b\mod \t, c/\t \mod \t)\in k^{2}$. The \aud\ $(K_{S},\chi_{S})$ is called the {\em Kloosterman \aud}.

There is a unique central character $\om$ compatible with $(K_{S},\chi_{S})$:  $\om_{0}=\chi_{0}|_{Z(k)}$ and $\om_{\infty}=\chi_{0}^{-1}|_{Z(k)}$ (we identify $Z(F_{0})$ and $Z(F_{\infty})$ with $Z(k)$).   

The datum $(K_{\infty},\chi_{\infty})$ picks up those representations of $G(F_{\infty})$ that contain nonzero eigenvectors under $I^{+}_{\infty}$ on which $I^{+}_{\infty}$ acts through the character $\psi\circ\phi$. These representations were first discovered by Gross and Reeder \cite[\S9.3]{GR}, and are called {\em simple supercuspidal representations}. When $G$ is simply-connected, any such representation is given by compact induction $\ind^{G(F_{\infty})}_{Z(k)I^{+}_{\infty}}(\om_{\infty}\boxtimes\psi\circ\phi)$ for some  character $\om_{\infty}:Z(k)\to \Qlbar^{\times}$.
\end{exam}

\subsection{A naive definition of rigidity}
We would like to call an automorphic datum $(K_{S},\chi_{S})$ {\em rigid} if $\dim \cA_{c}(K_{S},\chi_{S})=1$, i.e., compactly supported $(K_{S},\chi_{S})$-typical automorphic forms are unique up to a scalar. For example, the automorphic datum in Example \ref{ex:SL2 3pt} turns out to be rigid in this sense. 

However, there are several issues with this definition:
\begin{itemize}
\item When $G$ is not simply-connected, $\Bun_{G}$ has several connected components, and it is more natural to require $(K_{S},\chi_{S})$-typical automorphic forms  to be unique up to scalar on each connected component. With this modification, the Kloosterman automorphic data in Example \ref{ex:Kl} are rigid.
\item It is more natural to fix a central character.
\end{itemize}
But these are only technical issues. The more serious issue is the following:
\begin{itemize}
\item The space $\cA_{c}(K_{S},\chi_{S})$ may be small simply because the field $k$ is small. It would be more natural to ask that the dimension of $\cA_{c}(K_{S},\chi_{S})$ be ``independent of $k$''.
\end{itemize}
What does it mean to vary $k$ while keeping the automorphic datum $(K_{S},\chi_{S})$? To make senses of it, we need to reformulate  automorphic datum in geometric terms.

\subsection{Sheaf-to-function correspondence}\label{ss:sheaf to fun}
\subsubsection{The dictionary}
Let $X$ be a scheme of finite type over a finite field $k$ and let $\calF$ be a constructible complex of $\Qlbar$-sheaves for the \'etale topology of $X$. For each closed point $x\in X$, the geometric Frobenius element $\Frob_{x}$ at $x$ acts on the geometric stalk $\calF_{\xbar}$, which is a complex of $\Qlbar$-vector spaces.  We consider the function
\begin{eqnarray*}
f_{\calF, k}: X(k)&\to& \Qlbar\\
x  &\mapsto& \sum_{i\in\ZZ}(-1)^{i}\Tr(\Frob_{x}, \upH^{i}\calF_{\xbar})
\end{eqnarray*}
Similarly we can define a function $f_{\calF,k'}:X(k')\to\Qlbar$ for any finite extension  $k'$ of $k$. The correspondence
\begin{equation*}
\calF\mapsto \{f_{\calF,k'}\}_{k'/k}
\end{equation*}
is called the {\em sheaf-to-function} correspondence. This construction enjoys various functorial properties. For a morphism $\phi: X\to Y$ over $k$, the derived push forward $f_{!}$ transforms into integration of functions along the fibers (this is a consequence of the Lefschetz trace formula for the Frobenius endomorphism); the derived pullback $f^{*}$ transforms into pullback of functions. It also transforms tensor product of sheaves into pointwise multiplication of functions.

\begin{defn}\label{def:char} Let $L$ be a connected algebraic group over $k$ with the multiplication map $m:L\times L\to L$ and the identity point $e:\Spec (k)\to L$. A {\em rank one  character sheaf} $\calK$ on $L$ is a local system of rank one on $L$ equipped with two isomorphisms
\begin{eqnarray*}
\mu: m^{*}\calK\isom\calK\boxtimes\calK,\\
u: \Qlbar\isom e^{*}\calK.
\end{eqnarray*}
These isomorphisms should be compatible in the sense that
\begin{eqnarray*}
\mu|_{L\times\{e\}}=\id_{\calK}\otimes u: \calK=\Qlbar\otimes_{\Qlbar}\calK\isom e^{*}\calK\otimes\calK,\\
\mu|_{\{e\}\times L}=u\otimes\id_{\calK}: \calK=\calK\otimes_{\Qlbar}\Qlbar\isom \calK\otimes e^{*}\calK.
\end{eqnarray*}
\end{defn}

Since $L$ is connected, the isomorphism $\mu$ in Definition \ref{def:char} automatically satisfies the usual cocycle relation on $L^{3}$.  A local system $\calK$ of rank one on $L$ being a character sheaf is a property rather than extra structure on $\calK$. Let $\CS(L)$ be the category (groupoid) of rank one character sheaves $(\calK,\mu,u)$ on $L$, then it carries a symmetric monoidal structure given by the tensor product of character sheaves with the unit object given by the constant sheaf. The groupoid $\CS(L)$ has trivial automorphisms, therefore it is equivalent to its underlying set. Tensor product equips $\CS(L)$ with the structure of an abelian group. 

If $f: L\to L'$ is a homomorphism between connected algebraic groups, then pullback along $f$ induces a group homomorphism $f^{*}: \CS(L')\to \CS(L)$.

One can define the group $\CS(L)$ of rank one character sheaves for $L$ a connected proalgebraic group. In fact, write $L$ as the inverse limit of finite-dimensional quotients $L_{i}$, and define $\CS(L)$ as the direct limit of $\CS(L_{i})$. 

We can similarly define the notion of rank one character sheaves over $\kbar$ and form the group $\CS(L/\kbar)$. The base change map $\CS(L)\to \CS(L/\kbar)$ is injective, and the image consists of $\Gk$-invariants.

%\begin{remark}\label{r: char disconn} One can make a similar definition of rank one character sheaves when $L$ is not necessarily connected, then the cocycle condition has to be imposed on $\mu$ as an extra requirement and $\mu$ itself is an extra datum. For example, if $L$ is a discrete group scheme over $\kbar$ and we fix an isomorphism $\calK\cong\Qlbar$ (as local systems) extending $u$, then $\mu$ (which satisfies the cocycle relation) gives a cocycle $\xi\in Z^{2}(L,\Qlbar^{\times})$ satisfying $\xi_{1,\gamma}=\xi_{\gamma,1}=1$ for all $\gamma\in L$. 
%\end{remark}

For each $\calK\in\CS(L)$, the sheaf-to-function correspondence gives a function  $f_{\calK}:L(k)\to \Qlbar^{\times}$ which is in fact a group homomorphism because of the isomorphism $\mu$. This way we obtain a homomorphism
\begin{equation}\label{fL}
f_{L}:\CS(L)\to \Hom(L(k),\Qlbar^{\times}).
\end{equation}
One can show that $f_{L}$ is always injective. The following result gives descriptions of $\CS(L)$ in various special cases.

\begin{theorem}[{\cite[Appendix A]{Y-CDM}}]\label{th:char sh comm} 
\begin{enumerate}
\item Let $L$ be a connected commutative algebraic group over $k$. Then $f_{L}$ is an isomorphism of abelian groups
\begin{equation*}
f_{L}:\CS(L)\isom \Hom(L(k), \Qlbar^{\times}).
\end{equation*}
\item Let $L$ be a connected reductive group over $k$ and $L^{\sc}\to L$ be the simply-connected cover of its derived group. Then $f_{L}$ induces an isomorphism of abelian groups
\begin{equation*}
\CS(L)\isom \Hom(L(k)/L^{\sc}(k), \Qlbar^{\times}).
\end{equation*}
Let $T$ be a maximal torus in $L$ and $T^{\sc}\subset L^{\sc}$ be its preimage in $L^{\sc}$.  Then we also have
\begin{equation*}
\CS(L)\isom \Hom(T(k)/T^{\sc}(k), \Qlbar^{\times}).
\end{equation*}
\end{enumerate}
\end{theorem}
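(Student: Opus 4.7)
For part (1), since $f_L$ is already known to be injective, it suffices to exhibit a right inverse. The natural source of such a construction is Lang's theorem: the Lang map
\begin{equation*}
\mathrm{Lang}:L\to L,\quad x\mapsto F(x)x^{-1},
\end{equation*}
where $F$ denotes the Frobenius, is an \'etale surjection with kernel $L(k)$, and because $L$ is commutative it is a group homomorphism, so it realizes $L$ as an $L(k)$-torsor over itself. Given $\chi:L(k)\to\Qlbar^{\times}$, I would set $\calK_{\chi}:=(\mathrm{Lang}_{*}\Qlbar)_{\chi}$, the $\chi$-isotypic summand, a rank-one local system on $L$. The homomorphism property of $\mathrm{Lang}$ (which crucially uses commutativity of $L$) produces canonical isomorphisms $m^{*}\calK_{\chi}\isom\calK_{\chi}\boxtimes\calK_{\chi}$ and $\Qlbar\isom e^{*}\calK_{\chi}$ satisfying the axioms of Definition \ref{def:char}. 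A Grothendieck--Lefschetz trace computation on the fiber $\mathrm{Lang}^{-1}(x)$ for $x\in L(k)$ yields $f_{\calK_{\chi}}(x)=\chi(x)$, so $\chi\mapsto\calK_{\chi}$ is a right inverse to $f_{L}$.

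For part (2), I would use the central isogeny $\pi:L^{\sc}\times Z\to L$, where $Z:=Z(L)^{0}$ is the connected center and $A:=\ker(\pi)$ is a finite central group scheme. Pullback along $\pi$ is injective on character sheaves, and a K\"unneth-type argument for character sheaves on products of connected groups gives
\begin{equation*}
\pi^{*}:\CS(L)\hookrightarrow\CS(L^{\sc}\times Z)=\CS(L^{\sc})\oplus\CS(Z).
\end{equation*}
The crucial input is the vanishing $\CS(L^{\sc})=0$ discussed below; granting this, part (1) identifies the right-hand side with $\Hom(Z(k),\Qlbar^{\times})$. A character sheaf on $Z$ descends along $\pi$ iff its pullbacks along the two maps $A\rightrightarrows Z$ coming from $A\hookrightarrow L^{\sc}\times Z$ agree. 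Translating this condition through the sheaf-to-function dictionary together with the Lang-cohomology sequence
\begin{equation*}
1\to A(k)\to L^{\sc}(k)\times Z(k)\to L(k)\to H^{1}(k,A),
\end{equation*}
in which $H^{1}(k,L^{\sc})=H^{1}(k,Z)=0$ by Lang's theorem, I would identify $\CS(L)$ with the characters of $Z(k)$ that are trivial on the image of $A(k)$, which is exactly $\Hom(L(k)/L^{\sc}(k),\Qlbar^{\times})$. The second isomorphism in (2) follows from running the identical argument with the isogeny $T^{\sc}\times Z\to T$ (same kernel $A$), since the natural map $T(k)/T^{\sc}(k)\to L(k)/L^{\sc}(k)$ is an isomorphism by the same Lang cohomology.

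The hard part will be proving $\CS(L^{\sc})=0$. The plan is to pass to $\kbar$ first: on a connected algebraic group $H/\kbar$, a rank-one character sheaf is equivalent to a continuous $\Qlbar^{\times}$-valued character of the abelianized \'etale fundamental group $\pi_{1}^{\et}(H,e)^{\ab}$ (multiplicativity is automatic once the local system is of rank one on a connected group, via the K\"unneth comparison on $H_{1}^{\et}$). For $H=L^{\sc}/\kbar$ simply-connected semisimple, one invokes the classical fact that $\pi_{1}^{\et}(L^{\sc})$ has no nontrivial prime-to-$p$ quotient -- provable via the Bruhat decomposition, which exhibits $L^{\sc}$ as a union of cells isomorphic to affine spaces -- so no continuous $\Qlbar^{\times}$-character exists. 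Descending from $\kbar$ back to $k$ uses that a geometrically trivial character sheaf $\calK$ is determined by the Frobenius-twist at the stalk $e^{*}\calK$, which is rigidified by the unit $u:\Qlbar\isom e^{*}\calK$ to be trivial. This completes the plan.
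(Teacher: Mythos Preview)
Your treatment of part (1) via the Lang isogeny is correct and matches the paper's sketch (the paper does not prove part (2) itself, citing \cite{Y-CDM}). However, your plan for part (2) has a genuine gap: the pullback $\pi^{*}:\CS(L)\to\CS(L^{\sc})\oplus\CS(Z)$ is \emph{not} injective in general. Take $L=\PGL_{2}$ in characteristic $\ne 2$: here $Z=Z(L)^{0}=1$, so $\pi$ is just the \'etale double cover $\SL_{2}\to\PGL_{2}$, and the nontrivial character sheaf on $\PGL_{2}$ built from this cover (corresponding under $f_{L}$ to the sign character of $\PGL_{2}(k)/\Im(\SL_{2}(k))\cong k^{\times}/(k^{\times})^{2}$) pulls back to the trivial sheaf on $\SL_{2}$. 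Your recipe would then output $\CS(\PGL_{2})=0$, contradicting the very statement you are proving. In general, any character of the \'etale part of $A=\ker\pi$ produces a character sheaf on $L$ that $\pi^{*}$ kills; these are precisely the sheaves your reduction cannot see, and they account for the discrepancy between $L(k)/\Im(L^{\sc}(k))$ and $Z(k)/\Im(A(k))$.

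A secondary issue: your argument for $\CS(L^{\sc}/\kbar)=0$ is incomplete. That $\pi_{1}(L^{\sc}/\kbar)$ has trivial prime-to-$p$ quotient does not preclude continuous $\Qlbar^{\times}$-valued characters, since $\Qlbar^{\times}\supset\mu_{p^{\infty}}$ and any affine variety in characteristic $p$ has a large pro-$p$ \'etale fundamental group coming from Artin--Schreier covers. (Relatedly, your K\"unneth claim that every rank-one local system on a connected group is automatically multiplicative also fails in characteristic $p$.) The desired vanishing is nonetheless true, but one must use multiplicativity more directly: any $\calK\in\CS(L^{\sc})$ is invariant under inner automorphisms, so its restriction to each root subgroup $U_{\a}\cong\Ga$ is invariant under scaling by $\kbar^{\times}$ and hence trivial; a big-cell argument then forces $\calK$ itself to be trivial.
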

The construction of an inverse to $f_{L}$ uses the Lang map $L\to L$ sending $g\mapsto F_{L}(g)g^{-1}$, where $F_{L}$ is the Frobenius endomorphism of $L$ (relative to $k$).
  
\begin{exam}\label{ex:Kummer} 
When $L=\Gm$ is the multiplicative group,  local systems in $\CS(\Gm)$ are called {\em Kummer sheaves}.  They are in bijection with characters  $\chi:k^{\times}\to \Qlbar^{\times}$. Let $[q-1]: \Gm\to\Gm$ be the $(q-1)\nth$ power map, which is the Lang map for $\Gm$. Then the Kummer sheaf   associated with the character $\chi$ is $\cK_{\chi}:=([q-1]_{!}\Qlbar)_{\chi}$, the direct summand of $[q-1]_{!}\Qlbar$ on which $\Gm(k)=k^{\times}$ acts through $\chi$.

More generally, when $L$ is a torus, any object in $\CS(L)$ is obtained as a direct summand of $[n]_{!}\Qlbar$ where $[n]:T\to T$ is the $n$-th power morphism and $n$ is prime to $\chk$. If $L$ is a split torus, it suffices to take $n=q-1$. %An object $\calK\in\CS(L/\kbar)$ belongs to $\CS(L)$ if and only if $\calK^{\otimes (q-1)}\cong \Qlbar$, or equivalently it is a direct summand of $[q-1]_{!}\Qlbar$. Objects in $\CS(L)$ are also called {\em Kummer sheaves}. 
\end{exam}

\begin{exam}\label{ex:AS}
When $L=\Ga$ is the additive group,  local systems in $\CS(\Ga)$ are called {\em Artin-Schreier sheaves}.  They are in bijection with characters  $\psi:k\to \Qlbar^{\times}$.  Let $\l_{\Ga}:\Ga\to \Ga$ be the $\Ga(k)$-torsor given by $a\mapsto a^{q}-a$. Then the Artin-Schreier sheaves associated with $\psi$ is $\AS_{\psi}:=(\l_{\Ga,!}\Qlbar)_{\psi}$, the direct summand of $\l_{\Ga,!}\Qlbar$ on which $\Ga(k)=k$ acts through $\psi$.

More generally, let $L=V$ be a vector space over $k$ viewed as a vector group. Fix a nontrivial characters  $\psi:k\to \Qlbar^{\times}$. Then objects in $\CS(V)$ are of the form $\AS_{\phi}:=\phi^{*}\AS_{\psi}$ for a unique covector $\phi\in V^{*}$, viewed as a homomorphism $\phi:V\to \Ga$.
\end{exam}

%\begin{remark}\label{r:char var}
%(1) Definition \ref{def:char} makes sense for any base field $k$. In this generality we have an alternative way to describe rank one character sheaves. Let $1\to C\to \tilL\xrightarrow{v} L\to1$ be a central extension of algebraic groups with $\tilL$ connected and $C$ a {\em discrete} finite group scheme over $k$.For any character $\chi_{C}:C\to \Qlbar^{\times}$, the corresponding direct summand $(v_{!}\Qlbar)_{\chi_{C}}\in\CS(L)$. Conversely, all objects in $\CS(L)$ arise this way.
%\end{remark}

\begin{ex} Let $L$ be a reductive group over $k$ such as $\GL_{n}$, $\PGL_{n}$ or $\SO_{n}$.  Describe elements in $\CS(L)$ using coverings of $L$.
\end{ex}

\subsection{Geometric automorphic data}\label{ss:geom data} We resume with the setting in \S\ref{ss:setting}. Also, for notational simplicity,  in the rest of the notes, we assume
\begin{equation}\label{Sk}
\mbox{$S\subset |X|$ is a finite set consisting of $k$-rational points.}
\end{equation}

\begin{defn}\label{def:geom data} A pair $(\bK_{S},\calK_{S})$ is a {\em geometric automorphic datum} with respect to $S$ if
\begin{enumerate}
\item $\bK_{S}$ is a collection $\{\bK_{x}\}_{x\in S}$, where  $\bK_{x}\subset L_{x}G$ is a connected proalgebraic subgroup contained in some parahoric subgroup.
\item $\calK_{S}$ is a collection $\{\calK_{x}\}_{x\in S}$ where $\calK_{x}\in\CS(\bK_{x})$. (i.e.,  $\calK_{x}$ is the pullback of a rank one character sheaf from a finite-dimensional quotient of $\bK_{x}$.)
\end{enumerate}
\end{defn}

\begin{remark}
\begin{enumerate}
\item It is possible to include a geometric central character into the definition of a geometric \aud, but we omit it here. For example, in case $\bK_{x}$ contains $Z$ for all $s\in S$, one datum to add is an trivialization of the character sheaf
\begin{equation*}
\bigotimes_{x\in S} \cK_{x}|_{Z} \in \CS(Z).
\end{equation*}
This is a geometric analogue of a compatibility condition in Definition \ref{def:cen},  but it is extra datum rather than a condition.

\item A geometric automorphic datum $(\bK_{S},\calK_{S})$ gives rise to an \aud\ $(K_{S},\chi_{S})$ where $K_{x}=\bK_{x}(k)$, and $\chi_{x}$ is the character of $K_{x}$ determined by $\cK_{x}$ under the sheaf-to-function correspondence (see \eqref{fL}). Since $k$ is a finite field, the character sheaf $\calK_{x}$ is uniquely determined by its associated character $\chi_{x}:K_{x}=\bK_{x}(k)\to\Qlbar^{\times}$. %Therefore we also call the pair $(\bK_{S},\chi_{S})$ a geometric automorphic datum, provided $\chi_{x}$ does arise from an object in $\CS(\bK_{x})$ for all $x\in S$.

\item Typical examples of  $\bK_{x}$ are the {\em Moy-Prasad groups}. See \S\ref{sss:MP}.
\end{enumerate}
\end{remark}

\subsubsection{Base change of geometric automorphic data}\label{sss:bc} Let $k'/k$ be a finite extension. Let $S'$ be the preimage of $S$ in $X\otimes_{k}k'$ (by \eqref{Sk},  $S'$ also consists of $k'$-rational points).  Given a geometric automorphic datum $(\bK_{S}, \cK_{S})$, we may define a corresponding geometric automorphic datum $(\bK_{S'}, \chi_{S'})$ for $G$ and the function field $F'=F\otimes_{k}k'$. For each $y\in S'$ with image $x\in S$, let $\bK_{y}=\bK_{x}\otimes_{k}k'$, and let $\cK_{y}\in\CS(\bK_{y})$ be the  pullback of  $\calK_{x}$  along the projection $\bK_{y}\to \bK_{x}$. We have the space $\cA_{c}(k'; K_{S'},\chi_{S'})$ of compactly supported $(K_{S'},\chi_{S'})$-typical automorphic forms defined for the situation $X', G\ot_{k}k', K_{S'}$ and $\chi_{S'}$ (the last two come from $\bK_{S'}$ and $\cK_{S'}$).

\begin{defn} A  geometric automorphic datum $(\bK_{S}, \cK_{S})$ is called {\em weakly rigid}, if there is a constant $N$ such that for {\em every} finite extension $k'/k$,
\begin{equation*}
\dim_{\Qlbar} \cA_{c}(k'; K_{S'}, \chi_{S'})\le N,
\end{equation*}
and,  for some finite extension $k'/k$, $\cA_{c}(k'; K_{S'}, \chi_{S'})\ne0$.
%\begin{enumerate}
%\item It is called {\em strongly rigid}, if for {\em every} finite extension $k'/k$,  there are at most one $(\bK_{S'},\cK_{S'}$)-typical automorphic representation of $G(\AA_{F'})$  (where $F'=F\otimes_{k}k'$) with any given central character, and if for {\em some} finite extension $k'/k$, a $(\bK_{S'},\cK_{S'}$)-typical automorphic representation exists.

%\item 
%\end{enumerate}
\end{defn}

\subsection{Automorphic sheaves} 
Suggested by Weil's interpretation in \S\ref{ss:Weil} and the sheaf-to-function correspondence \S\ref{ss:sheaf to fun},  we shall seek to upgrade the function space $\cA_{c}(K_{S},\chi_{S})$ into a category of sheaves on the moduli stack of $G$-torsors over $X$ with level structures. This idea was due to Drinfeld who worked out the case $G=\GL_{2}$, and for general $G$ it was formulated as the geometric Langlands correspondence by Laumon \cite{Laumon} and Beilinson--Drinfeld \cite{BD}.

\subsubsection{The category of automorphic sheaves}\label{sss:auto sheaves}  Consider the situation in \S\ref{ss:geom data}. Let $\bK^{+}_{x}\subset\bK_{x}$ be a connected normal subgroup of finite codimension such that the rank one character sheaf $\calK_{x}$ on $\bK_{x}$ is pulled back from the quotient $L_{x}=\bK_{x}/\bK^{+}_{x}$. Let $\Bun_{G}(\bK_{S})$ and $\Bun_{G}(\bK^{+}_{S})$ be the moduli stack of $G$-torsors over $X$ with the respective level structures, as defined in \S\ref{sss:level str}.  The morphism $\Bun_{G}(\bK^{+}_{S})\to \Bun_{G}(\bK_{S})$ is an $L_{S}:=\prod_{x\in S}L_{x}$-torsor.  The tensor product $\calK_{S}:=\boxtimes_{x\in S}\calK_{x}$ is an object in $\CS(L_{S})$. It makes sense to talk about $\Qlbar$-complexes of sheaves on $\Bun_{G}(\bK^{+}_{S})$ which are $(L_{S},\calK_{S})$-equivariant.

Without giving the detailed definition, we have the full subcategory 
\begin{equation*}
D_{c}(\bK_{S},\cK_{S})\subset D^{b}_{(L_{S},\calK_{S})}(\Bun_{G}(\bK^{+}_{S}), \Qlbar)
\end{equation*}
consisting of bounded constructible $\Qlbar$-complexes on $\Bun_{G}(\bK^{+}_{S})$ equipped with $(L_{S},\calK_{S})$-equivariant structures that are supported on finite-type substacks of $\Bun_{G}(\bK^{+}_{S})$. Objects in the category $D_{c}(\bK_{S},\cK_{S})$ are called {\em $(\bK_{S},\cK_{S})$-typical  automorphic sheaves} with respect to the geometric automorphic datum $(\bK_{S},\cK_{S})$.

Let
\begin{eqnarray*}
K:=\prod_{x\notin S}G(\calO_{x})\times\prod_{x\in S}K_{x};\\
K^{+}:=\prod_{x\notin S}G(\calO_{x})\times\prod_{x\in S}K^{+}_{x}
\end{eqnarray*}
According to \eqref{Weil Bun general}, we have an $L_{S}(k)$-equivariant equivalence of groupoids 
$$\Bun_{G}(\bK^{+}_{S})(k)\cong \AG/K^{+},$$
that allows us to identify $(L_{S}(k), \chi_{S})$-eigenfunctions on both sides.  Therefore the sheaf-to-function correspondence  gives a map
\begin{equation*}
\textup{Ob }D_{c}(\bK_{S},\cK_{S})\to \cA_{c}(K_{S}, \chi_{S}). 
\end{equation*}
Similarly, for the base-changed situation to $k'/k$ including $k'=\ov k$, we get the category $D_{c}(k';\bK_{S'},\chi_{S'})$ for the base changed situation and a map
\begin{equation*}
\textup{Ob }D_{c}(k';\bK_{S'},\cK_{S'})\to \cA_{c}(k';K_{S'}, \chi_{S'}).
\end{equation*}

\subsubsection{Relevant points} Consider a point $\calE\in \Bun_{G}(\bK_{S})(\kbar)$, which represents  a $G$-torsor over $X_{\kbar}$ with $\bK_{x}$-level structures at $x\in S$. The automorphism group $\Aut(\calE)$ of the point $\calE$ is an affine algebraic group over $\kbar$. For each  $x\in S$, restricting an automorphism of $\calE$ gives an element in $\bK_{x}\otimes_{k}\kbar$ (this depends on the choice a trivialization of $\calE$ around $x$), and thus a homomorphism
\begin{equation}\label{evS}
\ev_{S,\calE}:\Aut(\calE)\to \prod_{x\in S} \bK_{x}\ot \ov k\to \prod_{x\in S}L_{x}\ot \ov k=:L_{S}\ot\kbar
\end{equation}
which is well-defined up to conjugacy if one changes the trivialization of $\calE$ around $x$.

\begin{defn}\label{def:relevant} Let $(\bK_{S},\cK_{S})$ be a geometric automorphic datum. A point $\calE\in \Bun_{G}(\bK_{S})(\kbar)$ is called {\em relevant to $(\bK_{S},\cK_{S})$} if the restriction of $\ev_{S,\calE}^{*}\calK_{S}$ to the neutral component $\Aut(\calE)^{\c}$ of $\Aut(\calE)$ is trivial as an element in $\CS(\Aut(\calE)^{\c})$. Otherwise the point $\calE$ is called {\em irrelevant}.
\end{defn}

We have a similar characterization of relevant points in terms of automorphic sheaves.
\begin{lemma}\label{l:irr sh} Let $\calE\in \Bun_{G}(\bK_{S})(\kbar)$. Then $\cE$ is relevant for the geometric \aud\ $(\bK_{S},\cK_{S})$ if and only if there exists $\calF\in D_{c}(\bK_{S},\cK_{S})$ such that the geometric stalk of $\cF$ at $\cE$ is nonzero. The same is true when ``stalk'' is replaced by ``costalk''.
\end{lemma}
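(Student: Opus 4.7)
The plan is to use the $L_S$-orbit of a lift of $\cE$ and extract the (co)stalk via the character-sheaf structure. First I would fix a lift $\tilde\cE \in \Bun_G(\bK_S^+)(\kbar)$ of $\cE$ and form the orbit map $a: L_S\ot\kbar \to \Bun_G(\bK_S^+)\ot\kbar$, $\ell \mapsto \ell\cdot\tilde\cE$. Setting $H := \Stab_{L_S\ot\kbar}(\tilde\cE) = \Im(\ev_{S,\cE})$, I would note that $\ev_{S,\cE}$ maps $\Aut(\cE)^\c$ onto $H^\c$, so the pullback $\CS(H^\c)\hookrightarrow\CS(\Aut(\cE)^\c)$ is injective and relevance of $\cE$ is equivalent to $\cK_S|_{H^\c}$ being trivial. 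Using multiplicativity of $\cK_S$ to translate across cosets of $H^\c$, this is in turn equivalent to $\cK_S|_H$ being a trivial local system. For simplicity I would shrink $\bK_S^+$ so that $\Aut_{\Bun_G(\bK_S^+)}(\tilde\cE)=1$, whence $a^{-1}(\tilde\cE)=H$ as a scheme.

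For the direction ``relevance implies existence'', I would take $\cF := a_!\cK_S$. The Cartesian square coming from associativity,
\begin{equation*}
\xymatrix{L_S\times L_S \ar[r]^-{m} \ar[d]_-{\id\times a} & L_S \ar[d]^-{a}\\ L_S\times \Bun_G(\bK_S^+) \ar[r]^-{\mathrm{act}} & \Bun_G(\bK_S^+)}
\end{equation*}
together with $m^*\cK_S\cong\cK_S\bt\cK_S$ and the projection formula, would yield $\mathrm{act}^*(a_!\cK_S)\cong\cK_S\bt(a_!\cK_S)$, placing $\cF$ in $D_c(\bK_S,\cK_S)$. Base change would then identify $\cF_{\tilde\cE}=R\G_c(H,\cK_S|_H)$, equal to $R\G_c(H,\Qlbar)$ under relevance, which is nonzero (e.g.\ by the top compactly supported class). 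For costalks I would replace $a_!$ by $a_*$ and use right-adjoint base change to get $i^!a_*\cK_S=R\G(H,\Qlbar)\ne 0$ at $\tilde\cE$.

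For the converse, suppose $V:=\cF_{\tilde\cE}\ne 0$ for some $\cF\in D_c(\bK_S,\cK_S)$. Equivariance of $\cF$ together with $L_S$-equivariance of $a$ (for left translation on $L_S$) forces $a^*\cF$ to be $(L_S,\cK_S)$-equivariant on $L_S$, hence isomorphic to $V\ot\cK_S$ as a complex with $V=(a^*\cF)_e$. Restricting to $H$, I would obtain $V\ot\cK_S|_H\cong V_H$ (the constant complex with value $V$), because $a|_H$ is the constant map at $\tilde\cE$ under the trivial-stabilizer assumption. Comparing in a cohomological degree where $V^i\ne 0$ shows $(\cK_S|_H)^{\op\dim V^i}$ is a trivial local system, which forces $\cK_S|_H$ to be trivial; in particular $\cK_S|_{H^\c}$ is trivial and $\cE$ is relevant. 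The costalk version follows dually by running the analogous argument with $i^!\cF$ in place of $\cF_{\tilde\cE}$.

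The main obstacle will be the reduction to trivial stabilizers $\Aut_{\Bun_G(\bK_S^+)}(\tilde\cE)=1$: one has to verify that shrinking $\bK_S^+$ is harmless both for the category $D_c(\bK_S,\cK_S)$ and for the notion of (co)stalk at $\cE$ (via smooth pullback). Without this reduction the argument needs to keep track of the $B\ker(\ev_{S,\cE})$-gerbe that $a|_H$ classifies, and exploit the $\cK_S$-twisted equivariance of $\cF$ to trivialize it in the comparison step on $H$.
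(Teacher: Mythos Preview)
Your plan is correct (the stabilizer reduction you flag is indeed available: $\Aut(\tilde\cE)=\ker(\ev_{S,\cE})$ is an affine group of finite type, and deepening $\bK_S^+$ at one point of $S$ eventually kills it; the resulting change of $L_S$ only replaces the $L_S$-torsor by a further torsor, so $D_c(\bK_S,\cK_S)$ and the (co)stalk at $\cE$ are unaffected up to smooth pullback). But your route is genuinely different from the paper's.

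The paper never passes to $\Bun_G(\bK_S^+)$ or to an orbit map. It stays on $\Bun_G(\bK_S)$, takes the residual gerbe $i_\cE:\pt/A\hookrightarrow\Bun_G(\bK_S)$ with $A=\Aut(\cE)$, and observes that for any $\cF\in D_c(\bK_S,\cK_S)$ the complex $i_\cE^*\cF$ (or $i_\cE^!\cF$) is naturally an object of the $(A,\cL)$-equivariant derived category of a point, where $\cL=\ev_{S,\cE}^*\cK_S$. The lemma is then the standard dichotomy for that category: it vanishes when $\cL|_{A^\c}$ is nontrivial, and is equivalent to $\xi$-twisted representations of $\pi_0(A)$ (hence nonzero) when $\cL|_{A^\c}$ is trivial; in the latter case $i_{\cE!}$ of any nonzero object supplies the desired $\cF$. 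This avoids your stabilizer reduction entirely, since the gerbe $\pt/A$ already absorbs the automorphisms, and it feeds directly into the later description (Lemma~\ref{l:relpt twrep}) of $\Perv_c(\kbar;\bK_S,\cK_S)_\cE$ via $\Rep_\xi(\Gamma_\cE,\Qlbar)$. Your argument, on the other hand, is more elementary in that it never invokes the structure of twisted equivariant categories on a point, and it produces the eigensheaf $\cF=a_!\cK_S$ explicitly as a pushforward from $L_S$; the price is the extra bookkeeping with $H=\Im(\ev_{S,\cE})$ and the reduction step.
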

\begin{proof}[Proof sketch] Let $A=\Aut(\cE)$ and $i_{\cE}: \pt/A\incl \Bun_{G}(\bK_{S})$ be the embedding. Let $\cL:=\ev^{*}_{S,\cE}\cK_{S}\in \CS(A)$. Then $i^{*}_{\cE}\cF$ is an $(A,\cL)$-equivariant sheaf on a point. If $\cL|_{A^{\c}}$ is nontrivial, such a sheaf must be zero. If $\cL|_{A^{\c}}$ is trivial, such a sheaf can be understood using twisted representations of $\pi_{0}(A)$, see \S\ref{ss:tw rep}. 
\end{proof}

\begin{defn} A geometric \aud\ $(\bK_{S}, \cK_{S})$ is called {\em geometrically rigid}, if for every connected component of $\Bun_{G}(\bK^{+}_{S})$, there is exactly one relevant $\ov k$-point.
\end{defn}

The following lemma gives another characterization of relevant points in terms of automorphic functions.
\begin{lemma}\label{l:irr} 
 Let $[g]\in\AG/K=\Bun_{G}(\bK_{S})(k)$,  viewed as a $\kbar$-point of $\Bun_{G}(\bK_{S})$. 
\begin{enumerate}
\item If $[g]$ is irrelevant, then any $f\in \cA_{c}(K_{S},\chi_{S})$ must vanish on the double coset $G(F)gK$. Similar statement holds when $k$ is replaced by a finite extension $k'$.
\item If $[g]$ is relevant, then for some finite extension $k'/k$, there exists a nonzero $f\in \cA_{c}(k'; K_{S'},\chi_{S'})$ supported on the double coset preimages of $G(F')gK'$ ($K'$ is defined as $K$ for the base changed situation).
\end{enumerate}
\end{lemma}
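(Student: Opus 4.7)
For part (1), the plan is direct. Use Weil's identification $A(k) := \Aut(\cE_k)(k) = G(F) \cap gKg^{-1}$ from \S\ref{ss:Weil}. For each $\gamma \in A(k)$, writing $\gamma g = g \cdot (g^{-1}\gamma g)$ with $g^{-1}\gamma g \in K$ and combining the left $G(F)$-invariance of $f$ with its right $(K_S,\chi_S)$-equivariance yields
\begin{equation*}
f(g) \,=\, f(\gamma g) \,=\, f(g \cdot g^{-1}\gamma g) \,=\, \chi_S(g^{-1}\gamma g)\, f(g) \,=\, \chi_\cE(\gamma)\, f(g),
\end{equation*}
where $\chi_\cE := \chi_S\circ \ev_{S,\cE}\colon A(k) \to \Qlbar^\times$. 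Hence $f(g)=0$ as soon as some $\gamma \in A(k)$ makes $\chi_\cE(\gamma) \ne 1$. Irrelevance of $[g]$ says that $\ev_{S,\cE}^*\cK_S|_{A^\circ}$ is a nontrivial object of $\CS(A^\circ)$, so by the injectivity of $f_L$ noted just after \eqref{fL} (valid on any connected algebraic group, in particular on $A^\circ$), the associated character on $A^\circ(k) \subset A(k)$ is also nontrivial; this provides the desired $\gamma$. Then $f(g)=0$, and propagating by left $G(F)$-invariance and right $K$-equivariance gives $f\equiv 0$ on $G(F)gK$. The same argument works verbatim over any finite extension $k'/k$, using $A^\circ(k)\subset A^\circ(k')$.

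For part (2), I would pass through the sheaf-to-function dictionary of \S\ref{ss:sheaf to fun}. By relevance and Lemma \ref{l:irr sh}, fix $\cF_0 \in D_c(\bK_S,\cK_S)$ with nonzero geometric stalk at $\cE$. The fiber $\pi^{-1}(\cE) \hookrightarrow Y^+ := \Bun_G(\bK^+_S)$ is a locally closed, $L_S$-stable substack, and since $L_S$ is connected Lang's theorem makes $\pi^{-1}([g])$ a trivial $L_S$-torsor over $\Spec k$; in particular $[g]$ lifts to the canonical $k$-point $[\tilde g] := G(F)gK^+ \in Y^+(k)$. Setting $j\colon\pi^{-1}(\cE)\hookrightarrow Y^+$ and replacing $\cF_0$ by $\cF := j_!\,j^*\cF_0$ preserves the $(L_S,\cK_S)$-equivariance, retains the nonzero stalk at $[\tilde g]$, and forces $\cF$ to vanish outside $\pi^{-1}(\cE)$. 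The sheaf-to-function correspondence then produces $f_{\cF,k'}\in\cA_c(k';K_{S'},\chi_{S'})$ which is automatically supported on the preimages of $G(F')gK'$ in $\AG'/K'^+$, for every finite $k'/k$.

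It remains to achieve $f_{\cF,k'}([\tilde g])\ne 0$ for some $k'$. The stalk $V := \cF_{[\tilde g]_{\kbar}}$ is a nonzero finite-dimensional $\Qlbar$-vector space on which $\Frob_k$ acts invertibly (invertibility following from the fact that $\cF|_{\pi^{-1}(\cE)}$ is lisse, being $\cK_S$ tensored with a twisted representation of the finite group $\pi_0(A)$). Writing the nonzero Frobenius eigenvalues as $\lambda_1,\ldots,\lambda_d$, one has $f_{\cF,k'}([\tilde g])=\sum_i \lambda_i^m$ with $m=[k':k]$; simultaneous vanishing of these power sums for $m=1,\ldots,d$ would, by Newton's identities, force every $\lambda_i=0$, contradicting invertibility. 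Some $m$ therefore realizes $f_{\cF,k'}([\tilde g])\ne 0$.

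The main obstacle is in part (2), specifically bridging from ``nonzero geometric stalk'' (supplied by Lemma \ref{l:irr sh}) to ``nonzero function value over some $k'$''. The two key ingredients are the $j_!\,j^*$ truncation that concentrates $\cF$ on the desired fiber while preserving the equivariance and the nonzero stalk, and the Newton-identity observation that any nonzero invertible Frobenius module has a nonvanishing trace after some finite base extension. Part (1) is, by comparison, the classical isotropy-character obstruction.
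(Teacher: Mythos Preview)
The paper does not include a proof of this lemma, so there is nothing to compare against directly; I will just evaluate your argument.

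Part (1) is correct and is the standard isotropy--character obstruction. One small point worth making explicit: irrelevance is formulated over $\kbar$, so a priori you only know that the pullback character sheaf is nontrivial on $A^{\circ}_{\kbar}$. Since $\cE$ is a $k$-point, $A^{\circ}$ is defined over $k$, and triviality of a character sheaf is preserved under base change; hence the sheaf is already nontrivial over $k$, and injectivity of $f_{L}$ applies as you say.

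Part (2) has the right strategy but one genuine imprecision. You take $\cF_{0}\in D_{c}(\bK_{S},\cK_{S})$ from Lemma~\ref{l:irr sh} and set $\cF=j_{!}j^{*}\cF_{0}$. The restriction $j^{*}\cF_{0}$ to the single orbit $\pi^{-1}(\cE)$ is an $(L_{S},\cK_{S})$-equivariant \emph{complex}; each cohomology sheaf is indeed lisse (by transitivity of the $L_{S}$-action), but the stalk $\cF_{[\tilde g]_{\kbar}}$ is a complex $V^{\bullet}$, not a single vector space. Your Newton--identities argument shows that if $\Frob_{k}$ acts with nonzero eigenvalues on a single nonzero vector space, some power has nonzero trace; but for a complex the relevant quantity is the alternating sum $\sum_{i}(-1)^{i}\Tr(\Frob_{k}^{m},H^{i}V^{\bullet})$, which can vanish for all $m$ if the even and odd parts happen to carry the same Frobenius eigenvalues. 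The fix is immediate: replace $\cF$ by $j_{!}\,\cH^{i}(j^{*}\cF_{0})$ for any $i$ with $\cH^{i}(j^{*}\cF_{0})\ne 0$. This is still $(L_{S},\cK_{S})$-equivariant (the equivariance structure passes to cohomology sheaves), is now a single sheaf, and its stalk at $[\tilde g]$ is a single nonzero vector space, so your Frobenius--trace argument goes through. Alternatively, one can invoke Lemma~\ref{l:relpt twrep} directly to produce $\cF_{0}$ as a shifted local system from the start.
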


From this lemma,  we get the following result, which explains the relation between geometric rigidity and weakly rigidity.

\begin{theorem} Let $(\bK_{S},\cK_{S})$ be a geometric automorphic datum. 
\begin{enumerate}
\item The geometric \aud\ $(\bK_{S},\cK_{S})$ is weakly rigid if and only if there is a finite nonzero number of relevant $\kbar$-point on $\Bun_{G}(\bK_{S})$.
\item In particular, if $(\bK_{S},\cK_{S})$  is geomertically rigid, then it is weakly rigid.
\item If $(\bK_{S},\cK_{S})$ is weakly rigid, then for any finite extension $k'/k$, any $(K'_{S}, \chi_{S'})$-typical automorphic form $f$ is cuspidal. 
\end{enumerate}
\end{theorem}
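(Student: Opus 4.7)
The plan is to establish the three parts in order, using Lemma \ref{l:irr} as the main tool for (1) and (2), and reducing (3) to the classical structural theory of automorphic forms over function fields.

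For (1), I decompose $\cA_c(k'; K'_{S'}, \chi_{S'})$ as a direct sum over double cosets $G(F')gK'$, equivalently over the $k'$-points of $\Bun_G(\bK_S)$ via Weil's interpretation \eqref{Weil Bun general}. By Lemma \ref{l:irr}(1), summands corresponding to irrelevant double cosets vanish. Each relevant summand has dimension bounded by $|\pi_0(\Aut(\cE))|$, since a typical function supported on $G(F')gK'$ is determined by its value at $g$ subject to a twisted invariance under the component group of the arithmetic automorphism group; this bound is a geometric invariant independent of $k'$. Because relevance is a Galois-invariant condition on $\kbar$-points, the number of relevant $k'$-points is at most the total number of relevant $\kbar$-points. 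Finitely many relevant $\kbar$-points thus yield a uniform dimension bound, while Lemma \ref{l:irr}(2) provides non-vanishing after a suitable base change. Conversely, if the set of relevant $\kbar$-points were infinite, one chooses $k'$ large enough that arbitrarily many become $k'$-rational, then applies Lemma \ref{l:irr}(2) (after possibly enlarging $k'$ again) to each to obtain non-zero typical forms with mutually disjoint supports, forcing the dimensions to grow without bound.

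Part (2) is an immediate corollary. Since $G$ is semisimple, $\pi_0(\Bun_G) \cong \pi_1(G)$ is finite; since each $\bK_x$ is connected, so is the quotient $L_x = \bK_x/\bK_x^+$, and thus the $L_S$-torsor $\Bun_G(\bK_S^+) \to \Bun_G(\bK_S)$ has connected fibers, so both stacks share the same finite set of connected components. Geometric rigidity yields exactly one relevant $\kbar$-point per component, hence a finite nonzero total; (1) applies.

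For (3), I proceed in two steps. First, I invoke the classical structural fact for $G$ semisimple over a function field, which underlies the cited equivalence \cite[Proposition 8.23]{VLaff}: every compactly supported automorphic form is cuspidal, so any $f \in \cA_c(k'; K'_{S'}, \chi_{S'})$ is automatically a cusp form, with the uniform bound $\dim \cA_c \leq N$ guaranteeing that the Hecke orbit remains finite-dimensional. Second, and the harder step, I need to show that under weak rigidity the full typical space $\cA(K'_{S'}, \chi_{S'})$ coincides with its compactly supported subspace $\cA_c(K'_{S'}, \chi_{S'})$. The strategy is to argue by contradiction: a non-compactly-supported typical form would arise from a non-trivial Eisenstein contribution induced from a proper parabolic $P = MN$; by varying the $M$-cuspidal inducing datum across base-changes $k'/k$, one would produce an unbounded family of typical forms whose residues or truncations populate $\cA_c(k'; K'_{S'}, \chi_{S'})$ with dimension growing in $[k':k]$, contradicting the uniform bound $N$.

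The principal obstacle lies in the second step of (3): rigorously connecting the existence of a single non-cuspidal typical form on $G$ to an unboundedly growing family in $\cA_c$ after base change. This demands a careful parabolic-induction analysis, identifying $(K_S, \chi_S)$-typical vectors inside representations parabolically induced from Levi subgroups and controlling how Eisenstein series transport them back to $G$; getting the growth estimate right against the weak-rigidity bound is where most of the technical work of the proof will concentrate.
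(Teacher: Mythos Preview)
Your approach to parts (1) and (2) is essentially what the paper intends (it gives no detailed proof, only the sentence ``from this lemma we get the following result''). One small imprecision: for a single $k'$-rational double coset $G(F')gK'$, the space of typical functions supported there has dimension at most $1$, not $|\pi_0(\Aut(\cE))|$; the factor $|\pi_0(\Aut(\cE))|$ instead bounds the number of $k'$-forms (twists) of a given relevant $\bar k$-point $\cE$, via $|H^1(k',\Aut(\cE))|\le |\pi_0(\Aut(\cE))|$ (Lang's theorem kills the contribution of the neutral component). With that correction your uniform bound $N=\sum_{\cE}|\pi_0(\Aut(\cE))|$ over relevant $\bar k$-points is correct.

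For part (3), however, you have gone significantly astray. Your first step asserts that ``every compactly supported automorphic form is cuspidal''; this is false (the characteristic function of a single double coset is a counterexample), and \cite[Proposition 8.23]{VLaff} does not say this---it says that \emph{vanishing of constant terms} is equivalent to \emph{compactly supported together with finite-dimensional Hecke orbit}. Your second step, the Eisenstein-series growth argument, is therefore built on a wrong foundation and is in any case unnecessary.

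The intended argument is direct. First, Lemma~\ref{l:irr}(1) applies verbatim to any $f\in\cA(K'_{S'},\chi_{S'})$, not only to $\cA_c$: the vanishing of $f$ on an irrelevant double coset is a local statement at that coset, using only injectivity of $\CS(\Aut(\cE)^\circ)\to\Hom(\Aut(\cE)^\circ(k'),\Qlbar^\times)$. Hence any typical $f$ is supported on the finitely many relevant $k'$-points, so $\cA(K'_{S'},\chi_{S'})=\cA_c(K'_{S'},\chi_{S'})$ automatically. Second, the spherical Hecke algebra at each $x\notin S'$ commutes with the right $K'_y$-action for $y\in S'$ and preserves compact support, so it preserves the finite-dimensional space $\cA_c(K'_{S'},\chi_{S'})$. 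Thus every $f$ generates a finite-dimensional $\cH_{K'}$-submodule and is cuspidal by the paper's working definition. No parabolic induction or growth estimate is needed.
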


\begin{exam} Consider the geometric \aud\ arising from Example \ref{ex:SL2 3pt}. The moduli stack $\Bun_{G}(\bI_{S})$ in this case classifies $(\calV,\iota,\{\ell_{x}\}_{x\in S})$ where $\calV$ is a rank two vector bundle over $X$, $\iota:\wedge^{2}(\calV)\cong\calO_{X}$ and $\ell_{x}$ is a line in the fiber $\calV_{x}$.

If $\chi_{S}$ satisfies the following genericity condition:
\begin{equation}\label{chi gen}
\mbox{for any map $\ep:S\to\{\pm1\} $, $\prod_{x\in S}\chi_{x}^{\ep(x)}$ is a nontrivial character on $k^{\times}$,}
\end{equation}
then $(\bI_{S}, \cK_{S})$ is geometrically rigid. This can be shown by doing the following Exercise. The only relevant point in $\Bun_{G}(\bI_{S})$ corresponds to the trivial bundle $\calO^{2}_{X}$ with three lines $\{\ell_{x}\}_{x\in S}$ in generic position (i.e., three distinct lines in $\ov k^{2}$).

\begin{ex} Consider the analogue of the \aud $(\bI_{S}, \chi_{S})$ as in Example \ref{ex:SL2 3pt} but for an arbitrary finite subset $S\subset \PP^{1}(k)$. Let $\cE$ be a rank two bundle over $\PP^{1}_{\ov k}$ with trivialized determinant and $\ell_{x}\in \cE_{x}$ be a line for each $x\in S$. We call $(\cE, \{\ell_{x}\}_{x\in S})$ {\em decomposable}, if $\cE$ can be written as a direct sum of line bundles $\cL'\op\cL''$ (over $\PP^{1}_{\ov k}$) such that for each $x\in S$, $\ell_{x}$ is either $\cL'_{x}$ or $\cL''_{x}$.
\begin{enumerate}
\item Show that if $(\cE, \{\ell_{x}\}_{x\in S})$ is decomposable, then the image of $\ev_{S,\cE}: \Aut(\cE, \{\ell_{x}\}_{x\in S})\to \GG_{m,\ov k}^{S}$ contains a subtorus of the form $\GG_{m,\ov k}\incl \GG_{m,\ov k}^{S}$ where each coordinate is either the identity map of $\Gm$ or the inversion map.
\item If $\chi_{S}$ satisfies \eqref{chi gen} and  $(\cE, \{\ell_{x}\}_{x\in S})$ is decomposable, then it is irrelevant for $(\bI_{S}, \chi_{S})$.
\item When $|S|\le 2$, show that all $(\cE, \{\ell_{x}\}_{x\in S})$ are decomposable.
\item When $|S|=3$, show that all $(\cE, \{\ell_{x}\}_{x\in S})$ are decomposable unless $\cE\cong \cO^{2}$.
\item Show that $(\cO^{2}, \{\ell_{x}\}_{x\in S})$ is indecomposable if and only if the set $\{\ell_{x}\}_{x\in S}$ contains at least $3$ distinct lines in $\ov k^{2}$. 
\item Consider the analogous situation for $G=\PGL_{2}$ and $|S|=3$. Which $(\cE, \{\ell_{x}\}_{x\in S})$ are indecomposable? 
\end{enumerate}

\end{ex}

\end{exam}

\begin{exam} Consider the geometric \aud\ arising from Example \ref{ex:Kl}. Let's assume $G$ is simply-connected so that $\Bun_{G}(\bK_{S})$ is connected. 

Let $B$ and $B^{op}$ be opposite Borel subgroups of $G$ containing $T$. Let $I_{0}\subset G(F_{0})$ be the Iwahori  that is the preimage of $B^{op}(k)\subset G(k)$; let $I_{\infty}^{+}\subset G(F_{\infty})$ be the Iwahori  that is the preimage of $B(k)\subset G(k)$.

Let $\G_{0}=I_{0}\cap G(\ov k[\t,\t^{-1}])$ ($\t=0$ corresponds to $t=\infty$), $\G^{+}_{\infty}=I_{\infty}^{+}\cap \G^{+}_{\infty}$. Then $\G_{0}\cap \G_{\infty}^{+}=\{1\}$. The $\ov k$-points of  $\Bun_{G}(\bK_{S})$ are in bijection with double cosets
\begin{equation}\label{Birk}
\G^{+}_{\infty} \bs G(\ov k[\t,\t^{-1}])/ \G_{0}.
\end{equation}
There is a bijection between the affine Weyl group $W_{\aff}=\xcoch(T)\rtimes W$ of $G$ and the double cosets \eqref{Birk}. To each $(\l,w)\in W_{\aff}$, the corresponding coset representative is $\t^{\l}\dot w$ for any lifting $\dot w$ to $N_{G}(T)$.

We show that the unit coset is the only relevant point for the Kloosterman \aud. Let $\tilw=(\l,w)\in W_{\aff}$ and let $\cE_{\tilw}\in \Bun_{G}(\bK_{S})(\ov k)$ be corresponding point. Then $\Aut(\cE_{w})=\G^{+}_{\infty}\cap \Ad(\tilw)\G_{0}$ is a finite-dimensional unipotent group.

To describe $\Aut(\cE_{w})$ we need some terminology from affine Lie algebras. The Lie algebra $\frg\ot \ov k[\t,\t^{-1}]$ of $G(\ov k[\t,\t^{-1}])$ decomposes as a direct sum of affine root spaces under the action of $T$ and the scaling on $\t$. For $\a\in \Phi(G,T)$, we say that $\frg_{\a}\t^{n}$ is the affine root space of the affine root $\a+n$. We say $\a+n>0$ if and only if $n>0$ or $(n=0)\wedge(\a>0)$. There is an action of $W_{\aff}$ on affine roots:  $(\l,w)$ sends $\a+n$ to $w\a+n+\j{\l,w\a}$. The extra affine simple root $\a_{0}=-\th+1$.

Then $\Aut(\cE_{w})$ is the unipotent group whose Lie algebra is spanned by the affine root spaces $\frg_{\a}\t^{n}$ such that
\begin{equation*}
\a+n>0, \tilw^{-1}(\a+n)<0.
\end{equation*}

When $\tilw\ne1$, we show $\cE_{\tilw}$ is irrelevant. one of the affine simple roots $\a_{i}$ will be sent to a negative affine root under $\tilw^{-1}$. Therefore $\Lie\Aut(\cE_{\tilw})$ contains $\a_{i}$, and the root subgroup $U_{\a_{i}}\subset \Aut(\cE_{\tilw})$. The evaluation map $\ev_{S}$ only maps nontrivially to $K_{\infty}$. Composing with $\ph$ to $\prod_{j}U_{\a_{j}}$, we see that
\begin{equation*}
\Aut(\cE_{\tilw})\xr{\ev_{\infty}}K_{\infty}=I^{+}_{\infty}\xr{\ph}\prod_{j}U_{\a_{j}}
\end{equation*}
 maps $U_{\a_{i}}$ identically to the $U_{\a_{i}}$-factor in the product. Therefore the Artin-Schreier sheaf $\AS_{\psi}$ pulls back nontrivially to $\Aut(\cE_{\tilw})$ (which is connected). This shows $\cE_{\tilw}$ is irrelevant.
\end{exam}

\begin{ex} For the Kloosterman \aud\ and  $G=\SL_{2}$, give an interpretation of $\Bun_{G}(\bK_{S})$ in terms of bundles with extra data. Classify points in this moduli stack, and show ``by hand'' that only the open point is relevant.
\end{ex}

\section{Constructing rigid \auda}

We will give several principles on how to construct  \auda\ that have a good chance of being weakly rigid. We also give a survey of most of the examples of rigid \auda\ in the literature.

\subsection{Numerical condition for rigidity}
All known examples of weakly rigid \gaud\ $(\bK_{S}, \cK_{S})$ satisfy
\begin{equation*}
\dim\Bun_{G}(\bK_{S})=0.
\end{equation*}
When $(\bK_{S}, \cK_{S})$ is geometrically rigid and its relevant points have finite stabilizer, the above equality automatically holds.

Suppose $X=\PP^{1}$. We give a formula for $\dim\Bun_{G}(\bK_{S})$.  Let $\frk_{x}\subset \frg(F_{x})$ be the Lie algebra of $\bK_{x}$. We have the relative dimension $[\frg(\cO_{x}): \frk_{x}]$ over $k_{x}$ (see Example \ref{ex:SLn par}).

\begin{lemma} Let $g_{X}$ be the genus of $X$. We have
\begin{equation*}
\dim\Bun_{G}(\bK_{S})=\sum_{x\in S}[\frg(\cO_{x}): \frk_{x}]+(g_{X}-1)\dim G
\end{equation*}
\end{lemma}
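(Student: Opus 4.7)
My plan is to split the formula into two pieces: the bare dimension $(g_X-1)\dim G$ of $\Bun_G$, and a local contribution $[\frg(\cO_x):\frk_x]$ from each point of $S$. The argument runs in three steps, of which only the third requires any real care.

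\emph{Step 1.} I would first compute $\dim\Bun_G=(g_X-1)\dim G$ via the standard deformation-theoretic calculation. The tangent complex at a $G$-torsor $\cE$ is $\bR\Gamma(X,\ad\cE)[1]$, where $\ad\cE=\cE\times^{G}\frg$. By Riemann--Roch,
\[
\chi(X,\ad\cE)=\deg(\ad\cE)+(1-g_X)\dim G.
\]
Because $G$ is split reductive, the character $\det\circ\Ad\colon G\to\Gm$ is trivial (roots pair as $\pm\alpha$ of sum zero, and the torus acts trivially on its own Lie algebra), so $\det\ad\cE\cong\cO_X$ and $\deg\ad\cE=0$. Hence $\dim\Bun_G=-\chi(X,\ad\cE)=(g_X-1)\dim G$.

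\emph{Step 2.} Next I would relate $\Bun_G(\bK_S)$ to $\Bun_G$. Since $\bK_x$ sits in a parahoric, it is commensurable with $L_x^+G$; choose a connected proalgebraic subgroup $\bL_x\subset\bK_x\cap L_x^+G$ of finite codimension in each (e.g.\ a sufficiently deep congruence subgroup), and write $\bL_S=\{\bL_x\}_{x\in S}$. The two forgetful morphisms
\[
\Bun_G(\bL_S)\to\Bun_G(\bK_S),\qquad \Bun_G(\bL_S)\to\Bun_G
\]
are, respectively, a $\prod_{x\in S}\bK_x/\bL_x$-torsor and a $\prod_{x\in S}L_x^+G/\bL_x$-torsor. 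Computing $\dim\Bun_G(\bL_S)$ in two ways gives
\[
\dim\Bun_G(\bK_S)=\dim\Bun_G+\sum_{x\in S}\bigl(\dim(L_x^+G/\bL_x)-\dim(\bK_x/\bL_x)\bigr).
\]

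\emph{Step 3.} Finally I would identify each summand with $[\frg(\cO_x):\frk_x]$. Extending the lattice-length formula of Example~\ref{ex:SLn par} to Lie algebras, define
\[
[\frg(\cO_x):\frk_x]:=\dim_{k_x}\bigl(\frg(\cO_x)/(\frg(\cO_x)\cap\frk_x)\bigr)-\dim_{k_x}\bigl(\frk_x/(\frg(\cO_x)\cap\frk_x)\bigr).
\]
A direct check shows that if $\bL_x$ is shrunk inside $\frg(\cO_x)\cap\frk_x$, the quantity $\dim(L_x^+G/\bL_x)-\dim(\bK_x/\bL_x)$ is unchanged (both terms grow by $\dim(\frg(\cO_x)\cap\frk_x)/\frl_x)$), so the right-hand side of Step~2 equals $\sum_x[\frg(\cO_x):\frk_x]$. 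Combined with Step~1 this gives the stated formula.

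The only genuine subtlety is Step~3: making $[\frg(\cO_x):\frk_x]$ well defined when $\bK_x$ is not contained in $L_x^+G$, and verifying that the choice of intermediate $\bL_x$ drops out. Step~1 is classical and Step~2 is pure torsor bookkeeping.
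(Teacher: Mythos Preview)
The paper states this lemma without proof, so there is nothing to compare your argument against. Your three-step outline is the standard argument and is correct: Riemann--Roch on $\ad\cE$ gives $\dim\Bun_G=(g_X-1)\dim G$ (using that the adjoint representation of a reductive group has trivial determinant), and the torsor comparison through a common deep-enough level $\bL_S\subset\bK_S\cap L^+_SG$ produces the local correction terms, which by definition coincide with $[\frg(\cO_x):\frk_x]$.

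One small remark on your Step~2: the paper's Definition~\ref{def:geom data} only requires $\bK_x$ to be contained in some parahoric, not literally commensurable with $L_x^+G$; but the very statement of the lemma presupposes $[\frg(\cO_x):\frk_x]<\infty$, so commensurability is implicit and your choice of $\bL_x$ as a deep congruence subgroup is justified.
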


Therefore when $X=\PP^{1}$, $\dim\Bun_{G}(\bK_{S})=0$ is equivalent to
\begin{equation}\label{dim condition}
\sum_{x\in S}[\frg(\cO_{x}): \frk_{x}]=\dim G.
\end{equation}
This is a numerical condition that the \aud\ wants to satisfy.

\begin{exam} Suppose $X=\PP^{1}$ and all $\bK_{S}$ are parahoric subgroups with Levi quotient $L_{x}$. Assume $\bK_{x}$ contains the standard Iwahori $\bI_{x}$. Then 
$$[\frg(\cO_{x}): \frk_{x}]=\dim L^{+}_{x}G/\bI_{x}- \dim \bK_{x}/\bI_{x}=\dfrac{\dim G-\dim L_{x}}{2}=\dfrac{|\Phi( G,T)|-|\Phi (L_{x},T)|}{2}.$$ 

One can read of the root system of $L_{x}$ from the extended Dynkin diagram $\wt\Dyn(G)$, so the condition \eqref{dim condition} is easy to verify in this case.

Here are some interesting examples in exceptional groups when $S=\{0,1,\infty\}$ and $\bK_{x}$ are parahoric subgroups. In each case,  we describe the subset of the extended Dynkin diagram correponding to the parahoric subgroups $\bK_{0}, \bK_{1}$ and $\bK_{\infty}$.

%are the parahorics that correspond to the {\em complement} of the vertices $v_{0}, v_{1}$ and $v_{\infty}$ respectively. We color these vertices in the extended Dynkin diagram by black.
\begin{enumerate}
\item Tetrahedron Example.  Let $G=G_{2}$:

\begin{equation*}
\xymatrix{ \stackrel{\a_{0}}\c\ar@{-}[r] &\stackrel{\a_{1}}\c\ar@3{->}[r] & \stackrel{\a_{2}}\c}
\end{equation*}
\begin{itemize}
\item $\bK_{0}$ corresponds to $\{\a_{0}, \a_{2}\}$ so $L_{0}\cong \SO_{4}$;
\item  $\bK_{1}$ and $\bK_{\infty}$ correspond to  single vertices (not necessarily the same one) so $L_{1}$ and $L_{\infty}$ are isogenous to $\GL_{2}$.
\end{itemize}

\item Octahedron Example. Let $G=F_{4}$:
\begin{equation*}
\xymatrix{\stackrel{\a_{0}}\c\ar@{-}[r] &\stackrel{\a_{1}}\c\ar@{-}[r] &\stackrel{\a_{2}}\c\ar@2{->}[r] &\stackrel{\a_{3}}\c\ar@{-}[r] & \stackrel{\a_{4}}\c
}
\end{equation*}
\begin{itemize}
\item $\bK_{0}$ corresponds to $\{\a_{0}, \a_{2}, \a_{3}, \a_{4}\}$ so that $L_{0}$ is isogenous to $\SL_{2}\times \Sp_{6}$;
\item  $\bK_{\infty}$ corresponds to  $\{\a_{0}, \a_{1}, \a_{3}, \a_{4}\}$ so that $L_{\infty}$ is isogenous to $\SL_{3}\times \SL_{3}$;
\item $\bK_{1}$ corresponds to $\{\a_{1}, \a_{3}, \a_{4}\}$ so that $L_{1}$ is isogenous to $\GL_{2}\times \SL_{3}$.
\end{itemize}

\item Icosahedron Example. Let $G=E_{8}$:

\begin{equation*}
\xymatrix{ \stackrel{\a_{1}}\c\ar@{-}[r] &\stackrel{\a_{3}}\c\ar@{-}[r] &\stackrel{\a_{4}}\c\ar@{-}[r]\ar@{-}[d] &\stackrel{\a_{5}}\c\ar@{-}[r] &\stackrel{\a_{6}}\c\ar@{-}[r] &\stackrel{\a_{7}}\c\ar@{-}[r] &\stackrel{\a_{8}}\c\ar@{-}[r] &\stackrel{\a_{0}}\c\\
&&\stackrel{\a_{2}}\c& 
}
\end{equation*}

\begin{itemize}
\item $\bK_{0}$ corresponds to the complement of $\a_{1}$ so that $L_{0}$ is isogenous to $\Spin_{16}$;
\item  $\bK_{\infty}$ corresponds to the complement of  $\a_{2}$ so that $L_{\infty}$ is isogenous to $\SL_{9}$;
\item $\bK_{1}$ corresponds to the complement of  $\a_{5}$ so that $L_{1}$ is isogenous to $\SL_{5}\times \SL_{5}$.
\end{itemize}
\end{enumerate}
\end{exam}

How are these examples related to Platonic solids? It turns out that in each example above, $\dim L_{x}=|\Phi|/n_{x}$ for some positive integer $n_{x}$ ($x\in S$). The triple $(n_{0},n_{\infty}, n_{1})$ in the three cases are:
\begin{enumerate}
\item $G=G_{2}$: $(n_{0},n_{\infty}, n_{1})=(2,3,3)$; 
\item $G=F_{4}$: $(n_{0},n_{\infty}, n_{1})=(2,3,4)$; 
\item $G=E_{8}$: $(n_{0},n_{\infty}, n_{1})=(2,3,5)$.
\end{enumerate}
  
\begin{ex} Construct more examples with all $\bK_{S}$ are parahoric subgroups satisfying \eqref{dim condition}.
\end{ex}

\subsection{Designing rigid \auda}
The choice of local data $(\bK_{x},\cK_{x})$ is guided under the local Langlands correspondence by the structure theory of $p$-adic groups on the one hand, and local Galois representations on the other hand.

\sss{Moy Prasad filtration}\label{sss:MP}
For a parahoric subgroup $\bP$ in the loop group $LG=G\lr{t}$, Moy and Prasad defined a natural filtration on it indexed by $\frac{1}{m}\ZZ_{\ge0}$: 
\begin{equation*}
\bP=\bP_{0}\supset \bP_{\ge 1/m}\supset \bP_{\ge 2/m}\supset\cdots.
\end{equation*}
Here the number $m$ can be calculated as follows. Let $\{n_{i}\}_{0\le i\le r}$ be the usual labelling on the extended Dynkin diagram:
\begin{equation*}
n_{0}=1, \quad \th=\sum_{i=1}^{r}n_{i}\a_{i}.
\end{equation*}
Let $J\subset \{0,1,\cdots, r\}$ corresponds to the subset of vertices in $\wt\Dyn(G)$ corresponding to $\bP$. Then
\begin{equation*}
m=\sum_{i\in J}n_{i}.
\end{equation*}

Let us assume $\bP$ contains the standard Iwahori subgroup $\bI$.  Let $x_{\bP}\in \xcoch(T)_{\QQ}$ be determined by the condition
\begin{equation*}
\j{\a_{i},x}=\begin{cases} 0, &  i\in J \\ 1/m, & i\notin J\end{cases} \quad 1\le i\le r.
\end{equation*}
This is the barycenter of the facet in the building of $LG$ corresponding to $\bP$. Then the Moy-Prasad filtration on $\bP$ can be described as follows:
\begin{enumerate}
\item $\bP_{\ge 1/m}=\bP^{+}$ is the pro-unipotent radical of $\bP$.
\item The quotient $V_{i/m}=\bP_{\ge i/m}/\bP_{\ge (i+1)/m}$ is a vector group over $k_{x}$. The affine roots that appear in $V_{i/m}$ are those  $\a+n$ (where $\a\in \Phi(G,T)\cup\{0\}$, $n\in\ZZ$) such that
\begin{equation*}
\j{\a, x_{\bP}}+n=i/m.
\end{equation*}
Here we also allow $\a$ to be zero, in which case the affine root space for $n$ is $t^{n}\Lie(T)$.
\end{enumerate}

\begin{ex} Describe the Moy-Prasad filtration for the standard Iwahori subgroup.
\end{ex}

\subsubsection{Local monodromy}\label{sss:loc mon} Let $x\in S$.  Under the local Langlands correspondence, an irreducible representation $\pi_{x}$ of $G(F_{x})$ should give rise to a homomorphism $\r_{x}: \Gal(F^{\sep}_{x}/F_{x})\to \dG(\Qlbar)$. We will discuss the global Langlands correspondence in \S\ref{ss:global Langlands}. If an automorphic representation $\pi$ of $G(\AA_{F})$ gives rise to a homomorphism $\r: \Gal(F^{\sep}/F)\to \dG(\Qlbar)$, then  $\r_{x}$ should be the restriction of $\r$ to the decomposition group $\Gal(F^{\sep}_{x}/F_{x})$ at $x$. If $\r$ comes from a $\dG$-local system $E$ on $U=X-S$, then $\r_{x}$ records the $\dG$-local system on $\Spec F_{x}$ obtained by restricting $E$ to the formal punctured disk at $x$.

The local Galois group $\Gal(F^{\sep}_{x}/F_{x})$ is an extension of the form
\begin{equation*}
1\to \calI_{x}\to \Gal(F^{\sep}_{x}/F_{x})\to\Gal(\kbar/k_{x})\to 1. 
\end{equation*}
The normal subgroup $\calI_{x}$ of $\Gal(F^{\sep}_{x}/F_{x})$ is  the {\em inertia group} at $x$. We have a normal subgroup $\calI^{w}_{x}\lhd\calI_{x}$ called the {\em wild inertia group} such that the quotient $\calI^{t}_{x}:=\calI_{x}/\calI^{w}_{x}$ is the maximal prime-to-$p$ quotient of $\calI_{x}$, called the {\em tame inertia group}. We have a canonical isomorphism of $\Gal(\kbar/k_{x})$-modules
\begin{equation*}
\calI^{t}_{x}\isom\varprojlim_{(n,p)=1}\mu_{n}(\kbar).
\end{equation*}
The $\dG$-local system $\rho$ is said to be {\em tame at $x\in S$} if $\rho_{x}|_{\calI_{x}}$ factors through the tame inertia group $\calI^{t}_{x}$. 

For representation $\s$ of $\Gal(F^{\sep}_{x}/F_{x})$ on an $n$-dimensional $\Qlbar$-vector space $V$, there is a multiset of $n$ non-negative rational numbers called the {\em slopes} of $\s$. The slopes measure how wildly ramified $\r_{x}$ is. Tame representations have all slopes equal to $0$. The sum of all slopes (with multiplicities) is a non-negative  integer called the Swan conductor $\Sw(\s)$. The Artin conductor of $\s$ is defined as
\begin{equation*}
a(\s)=\dim V/V^{\s(\cI_{x})}+\Sw(\s).
\end{equation*}
Of particular importance to us is when $\s=\Ad(\r_{x})$, which is the composition $\r_{x}: \Gal(F^{\sep}_{x}/F_{x})\to \dG(\Qlbar)\to\GL(\wh\frg)$.

\sss{Matching \auda \ with local monodromy}\label{sss:matching loc mono}

The local Langlands correspondence (which is still a conjecture beyond $\GL_{n}$) allows one to predict the inertial part of the local Galois representation from the datum $(\bK_{x},\cK_{x})$; conversely, if we want to design an \aud\ to match a given  representation $\r_{x}|_{\cI_{x}}: \cI_{x}\to \dG(\Qlbar)$, the local Langlands correspondence tells us which $(\bK_{x}, \cK_{x})$ to choose.

We give some principles of this sort.
\begin{enumerate}
\item If $(\bK_{S},\cK_{S})$ is weakly rigid, and it corresponds to a local $\dG$-Galois representation $\r_{x}$ at $x\in S$,  then we expect the matching of local numerical invariants:
\begin{equation*}
[\frg(\cO_{x}): \frk_{x}]\stackrel{?}{=}a(\Ad(\r_{x})).
\end{equation*}
For intuition behind this expectation, see \S\ref{sss:coh rig}.

\item If $\bK_{x}$ is a parahoric subgroup and $\cK_{x}$ is pulled back from the  Levi quotient $L_{x}$, then $\r_{x}$ should be tame. Moreover, one can determine the semisimple part of $\r_{x}|\cI_{x}$ as follows.  We may assume $T\subset L_{x}$. Restrict $\cK_{x}\in \CS(L_{x})$ to $T$ we get a Kummer local system which is in bijection with characters $\chi: T(k_{x})\to\Qlbar^{\times}$. Equivalently, $\chi$ can be viewed as a homomorphism $k_{x}^{\times}\ot_{\ZZ}\xcoch(T)\to\Qlbar^{\times}$, hence giving  $\chi': k_{x}^{\times}\to \Hom(\xch(T), \Qlbar^{\times})=\dT(\Qlbar)$. By local class field theory we have $\cI_{x}^{ab}\cong \cO_{x}^{\times}$ hence a canonical tame quotient $\cI_{x}\surj k_{x}^{\times}$. Then $\chi'$ can be viewed as a homomorphism
\begin{equation*}
\wt\chi': \cI_{x}\surj k_{x}^{\times}\xr{\chi'} \dT(\Qlbar).
\end{equation*}
Then $\wt\chi'$ is conjugate to the semisimple part of $\r_{x}|_{\cI_{x}}$ .

How about the unipotent part of $\r_{x}|_{\cI_{x}}$? When $(\bK_{S}, \cK_{S})$ is weakly rigid, the unipotent part of $\r_{x}|_{\cI_{x}}$ tends to be as nontrivial as possible, given that it commutes with its semisimple part which is determined by $\cK_{x}$ as described above.

\item If all slopes of $\r_{x}$ are $\le \l$ for some $\l\in \QQ$, then there should exist a parahoric  subgroup $\bP\subset L_{x}G$ such that $\bK_{x}$ contains $\bP_{\ge \l}$. Moreover, one can take $\bK^{+}_{x}$ to contain $\bP_{>\l}$.

For example, when $G=\PGL_{n}$, we will see in \S\ref{ss:Kl} that the Galois representation arising from the Kloosterman \aud\  is the classical Kloosterman sheaf of Deligne. Its slopes at $\infty$ are all equal to $1/n$. On the other hand, $\bK_{\infty}=\bI^{+}_{\infty}$ is the $1/n$ step in the Moy-Prasad filtration of $\bI_{\infty}$.  The more epipelagic examples \ref{ex:ep} also follow this pattern.

\item More generally, we expect $\bK_{x}$ to be a {\em Yu group}, following a construction of J-K. Yu \cite{Yu}. Roughly, if $\l_{0}<\l_{1}<\l_{2}<\cdots$ are the different slopes that appear in the local adjoint Galois representation $\Ad (\r_{x})$, one should try to find a sequence of twisted Levi subgroups
\begin{equation*}
G^{0}\subset G^{1}\subset G^{2}\subset \cdots \subset G
\end{equation*}
and Moy-Prasad groups $\bP^{i}_{\ge\l_{i}/2}\subset L_{x}G^{i}$ (again this is oversimplified!), and take  $\bK_{x}$ to be the subgroup generated by the $\bP^{i}_{\ge\l_{i}/2}$.  There is also a recipe for matching the wild part of $\cK_{x}$ (which corresponds to characters of $\bP^{i}_{\ge\l_{i}}/\bP^{i}_{>\l_{i}}$) with the restriction of $\r_{x}$ to the wild inertia.
\end{enumerate}

\subsection{More examples}

\begin{exam}[Hypergeometric \auda] In \cite{KY}, Kamgarpour and Yi constructed \auda\ realizing Katz's hypergeometric local systems.  Let $G=\PGL_{n}$, $X=\PP^{1}$. There are two kinds of such data corresponding to tame and wild hypergeometric sheaves.

The tame case generalizes Example \ref{ex:SL2 3pt}. In this case, $S=\{0,1,\infty\}$. Both $\bK_{0}$ and $\bK_{\infty}$ are taken to be an Iwahori subgroup. Let $\cK_{0}$ and $\cK_{\infty}$ be arbitrary Kummer sheaves on the quotient tori of $\bK_{0}$ and $\bK_{\infty}$. Take $\bK_{1}\subset L^{+}_{1}G$ to be parahoric subgroup that is the preimage of a parabolic subgroup with block sizes $(n-1,1)$ under the evaluation map $L^{+}_{1}G\to G$ (there are two conjugacy classes of such parabolic subgroups, but their preimages in $L^{+}_{1}G$ are conjugate under $\PGL_{n}(F_{1})$). Let $\cK_{1}$ be any Kummer sheaf pulled back from the abelianization map $\bK_{1}\to \Gm$.

In the wild case, take $S=\{0,\infty\}$. Let $\bK_{0}$ be an Iwahori subgroup and $\cK_{0}$  be a Kummer sheaf pulled back from its quotient torus. 	To define $(\bK_{\infty}, \cK_{\infty})$, we recall that the hypergeometric sheaf of rank $n$ we are trying to get under the Langlands correspondence will have $d$ slopes equal to $1/d$ (for some $1\le d\le n$), and $n-d$ slopes equal to $0$. Therefore it is natural to ask $\bK_{\infty}$ to contain $\bP^{+}=\bP_{1/d}$ for some parahoric subgroup with the first nontrivial filtration step $1/d$, together with a bit more to accommodate a Kummer sheaf on $\Gm^{n-d}$ for the tame part. Let $V=k^{n}$ and think of $\PGL_{n}$ as $\PGL(V)$. Choose a partial flag
$$V_{\bu}: \quad 0=V_{0}\subset V_{\le 1}\subset V_{\le 2}\subset\cdots V_{\le d}=V $$
with $V_{i}=V_{\le i}/V_{\le i-1}$ nonzero. Fix a decomposition $V_{i}=\ell_{i}\op V'_{i}$ ($1\le i\le d$). Let $\bP\subset L_{\infty}G$ be the preimage of the parabolic $P_{V_{\bu}}\subset \PGL(V)$ stabilizing the above partial flag. Then its Levi quotient $L_{\bP}\cong (\prod_{i=1}^{d}\GL(V_{i}))/\D\Gm$. Let $\bP^{++}=\bP_{2/d}$. Then the next subquotient in the Moy-Prasad filtration has the form
\begin{equation*}
\bP^{+}/\bP^{++}\cong \bigoplus_{i=1}^{d}\Hom(V_{i},V_{i-1})
\end{equation*}
as an $L_{\bP}$-module. Here $V_{0}$ is understood to be $V_{d}$. 

Let $\ph: \bP^{+}/\bP^{++}\to \Ga$ to be a linear function whose component $\ph_{i}: V_{i}\to V_{i-1}$ sends the line $\ell_{i}$ isomorphically to $\ell_{i-1}$, and is zero on $V'_{i}$. Let $L_{\ph}$ be the stabilizer of $\ph$ under $L_{\bP}$; then $L_{\ph}\cong \prod_{i}\GL(V'_{i})$. Let $B_{\ph}\subset L_{\ph}$ be a Borel subgroup. Then its quotient torus $T_{\ph}$ has dimension $n-d$. Finally, let $\bK_{\infty}$ be the preimage of $B_{\ph}$ under the quotient $\bP\to L_{\bP}$. 

To define $\cK_{\infty}$, we first extend $\ph^{*}\AS_{\psi}\in \CS(\bP^{+})$ to a rank one character sheaf $\cK_{\ph}$ on $\bK_{\infty}$ (which is possible because $B_{\ph}$ fixes $\ph$). Let $\cK_{\r}$ be the pullback of any Kummer sheaf on $T_{n-d}$ via $\bK_{\infty}\to B_{\ph}\to T_{\ph}$. Finally take $\cK_{\infty}=\cK_{\ph}\ot\cK_{\r}\in\CS(\bK_{\infty})$. 

The construction of $(\bK_{\infty},\cK_{\infty})$ in the wild case can be generalized to other types of groups, see Example \ref{ex:eu}.

For a specific choice of the partial flag $V_{\bu}$ where the dimensions of $V_{i}$ are distributed as evenly as possible, it was proved in \cite{KY} that $(\bK_{S},\cK_{S})$ is geometrically rigid.
\end{exam}

\begin{exam}[Airy \aud] In the work of Kamgarpour, Jakob and Yi \cite{JKY}, a series of geometrically rigid examples with $X=\PP^{1}$ and $S=\{\infty\}$ are introduced. When $G=\SL_{2}$ the corresponding local system, or rather its de Rham version, is the $D$-module given by the Airy equation. 

We explain the construction for  $G=\SL_{2}$. Let $K_{\infty}$ be the subgroup of $I_{\infty}$ consisting of the following matrices (where $a_{i}, b_{i}, c_{i}\in k$)
\begin{equation*}
\mat{1+a_{1}\t+\cdot }{b_{0}+b_{1}\t+\cdot }{b_{0}\t+c_{2}\t^{2}+\cdots}{1-a_{1}\t+\cdots}
\end{equation*}
There is a homomorphism $\ph: K_{\infty}\to k$ sending the above matrix to $b_{1}+c_{2}$ (check this is a group homomorphism!). Let $\chi_{\infty}$ be the pullback of a nontrivial additive character via $\ph$. It is clear how to turn $(K_{\infty}, \chi_{\infty})$ into a geometric \aud \ $(\bK_{\infty}, \cK_{\infty})$.

A more conceptual way of defining $K_{\infty}$ is the following. First take the Moy-Prasad filtration $I_{3/2}$ of $I_{\infty}$ and consider the homomorphism $\ph_{3/2}: I_{3/2}\to I_{3/2}/I_{2}\cong k^{2}\xr{sum} k$. We may ask if $\ph_{3/2}$ can be extended to a larger subgroup of $I^{+}=I_{1/2}$. First $\ph_{3/2}$ can be extended to $\ph_{1}: I_{1}\to k$ by letting it to be trivial on the diagonal matrices.  Next, we may view $\ph_{1}$ as an element (not uniquely determined) in the dual of the Lie algebra $\frg\lr{\t}$, which can be identified with $\frg\lr{\t}\frac{d\t}{\t}$ using the Killing form and the residue pairing. We choose $\wt\ph_{1}=\mat{0}{\t^{-2}}{\t^{-1}}{0}\frac{d\t}{\t}$. Let $H(F_{\infty})$ be the centralizer of $\wt\ph_{1}$ under the adjoint action of $G(F_{\infty})$. We see that
\begin{equation*}
H(F_{\infty})=\left\{\mat{a}{b}{b\t}{a}\Big|a^{2}-b^{2}\t=1, a,b\in F_{\infty}\right\}.
\end{equation*}
In fact $b\in \cO_{\infty}$ and $a\in 1+\t\cO_{\infty}$. Finally $K_{\infty}=H(F_{\infty})I_{1}$.  
\end{exam}

\begin{exam}[Epipelagic \auda]\label{ex:ep}
In \cite{Y-Epi} we give generalizations of Kloosterman sheaves. Let $X=\PP^{1}$ and $S=\{0,1\}$. Let $\bP_{\infty}\subset L_{\infty}G$ be a parahoric subgroup of specific types.  These parahoric subgroups are singled out by Reeder and Yu \cite{RY} to define their  {\em epipelagic representations}. Their conjugacy classes are in bijection with regular elliptic conjugacy classes in the Weyl group $W$. Consider the next step $\bP^{++}_{\infty}=\bP_{\infty, 2/m}$ in the Moy-Prasad filtration of $\bP_{\infty}$, and $V_{\bP}:=\bP^{+}_{\infty}/\bP^{++}_{\infty}$ is a finite-dimensional representation of the Levi $L_{\bP}$. 

We take $\bK_{\infty}=\bP^{+}_{\infty}$. To define $\cK_{\infty}$, we pick a linear function $\ph: V_{\bP}\to k$ satisfying the stability condition: it has closed orbit and finite stabilizer under $L_{\bP_{\infty}}$. Define $\cK_{\infty}$ to be the pullback of $\AS_{\psi}$ under the composition
\begin{equation*}
\bK_{\infty}=\bP^{+}_{\infty}\surj V_{\bP}\xr{\ph} \Ga.
\end{equation*}

For $\bK_{0}$, we take it to be the parahoric subgroup $\bP_{0}\subset L_{0}G$ opposite to $\bP_{\infty}$: the intersection $\bP_{0}\cap \bP_{\infty}$ is a common Levi subgroup $L_{\bP}$ of both. We can take any  $\cK_{0}\in \CS(L_{\bP})$. The geometric \aud\ $(\bK_{S}, \cK_{S})$ is geometrically rigid. To check this, one has to use the stability property of $\ph$.

One can let $\phi$ varies in an open subset of the dual space of $V_{\bP}$ and get a family version of geometric \aud. The resulting $\dG$-local systems ``glue'' together to give a $\dG$-local system over an open subset of $V^{*}_{\bP}\times(\PP^{1}-\{0,\infty\})$.
\end{exam}

\begin{exam}[Euphotic \auda, {\cite{JY}}]\label{ex:eu} This is a further generalization of the epipelagic \auda\ that weakens the assumption that $\ph\in V_{\bP}^{*}$ is stable. Let $X=\PP^{1}$ and $S=\{0,\infty\}$. Assume $G$ is simply-connected.

We start with any parahoric $\bP_{\infty}$ and any linear function $\ph$ on $V_{\bP}=\bP^{+}_{\infty}/\bP_{\infty}^{++}$ with a closed orbit under $L_{\bP}$. Let $L_{\ph}$ be the stabilizer of $\ph$ in $L_{\bP}$, and let $B_{\ph}\subset L_{\ph}$ be a Borel subgroup. Then $\bK_{\infty}=\bP^{+}_{\infty}B_{\ph}$. We take $\cK_{\infty}$ to be the tensor product of $\ph^{*}\AS_{\psi}$ (extended to a character sheaf on $\bK_{\infty}$ by letting it be trivial on $B_{\ph}$) and a Kummer sheaf $\cK_{\chi}$ from the quotient torus of $B_{\ph}$.

At $0$ we take $\bK_{0}$ to be a parahoric subgroup contained in the parahoric $\bP_{0}$ opposite to $\bP_{\infty}$. It corresponds to the choice of a parabolic subgroup $Q\subset L_{\bP}$. We require that
\begin{equation}\label{Q open orbit}
\mbox{The group $B_{\ph}$ acts on  $L_{\bP}/Q$ with an open orbit with finite stabilizer.}
\end{equation}
This is to match the numerical condition $\dim\Bun_{G}(\bK_{S})=0$. Let $\cK_{0}$ be trivial.

Such \aud\ $(\bK_{S}, \cK_{S})$ is not guaranteed to be weakly rigid. In \cite{JY},  for $P_{\infty}=G(\cO_{\infty})$, we give a complete list of pairs $(\ph, Q)$  (where $\ph\in\frg^{*}$ is semisimple, and $Q\subset G$ is a parabolic subgroup) satisfying the condition \eqref{Q open orbit}. In \cite{JY} we prove that in these cases, for a generic choice of the Kummer sheaf $\cK_{\chi}$ on $B_{\ph}$, $(\bK_{S}, \cK_{S})$ is weakly rigid, but not always geometrically rigid.
\end{exam}

\begin{exam} We resume the setup of Example \ref{ex:eu}. We give a series of examples of  euphotic \auda\ that are weakly rigid but not geometrically rigid. 

Consider the case $G=\SO(V)$ with $\dim_{k} V=2n+1$ and $\bP_{\infty}=G(\cO_{\infty})$. Write $V=I\op k\cdot e_{0}\op I^{*}$ where $I$ is a maximal isotropic subspace of $V$ and $(e_{0},e_{0})=1$. Let $\ph\in V_{\bP_{\infty}}^{*}\cong \frg$ be the element $\id_{I}\op 0\op (-\id_{I^{*}})$. Then the centralizer $G_{\ph}=L_{\ph}\cong \GL(I)$. Choose a Borel subgroup $B_{\ph}\subset G_{\ph}=\GL(I)$ and denote it by $B_{I}$, so that $\bK_{\infty}=G(\cO_{\infty})^{+}B_{I}$. Let $\bK_{0}\subset G(\cO_{0})$ be the preimage of the parabolic subgroup $Q_{I}\subset G$, the stabilizer of $I$. Then $\Bun_{G}(\bK_{\infty}, \bK_{0})$ contains an open substack isomorphic to $B_{I}\bs G/Q_{I}$ (this is the locus where the underlying $G$-bundle is trivial). For $\cE$ in this open substack, the local system $\ph^{*}\AS_{\psi}$ is always trivial  on the image of $\ev_{\infty,\cE}:\Aut(\cE)\to \bK_{\infty}$. So $\cE$ is relevant if and only if $\cK_{\chi}$ restricts to the trivial local system on the image of $\ev_{\infty,\cE}$. In particular, $\cE$ is relevant if $\Aut(\cE)^{\c}$ is  unipotent. If $\cE$ corresponds to a point $gQ_{I}\in G/Q_{I}$, then $\Aut(\cE)$ is the stabilizer of $gQ_{I}$ under the left translation by $B_{I}$. So we should look for points on $G/Q_{I}$ with unipotent stabilizers under $B_{I}$.

Assume $n=2m$ is even in the following discussion.
The partial flag variety $G/Q_{I}$ classifies $n$-dimensional isotropic subspaces of $V$. Consider the ``big cell'' $Y\subset G/Q_{I}$ consisting of isotropic $J\subset V$ such that $J$ projects isomorphically to $I$. Then $Y\cong I^{*}\op \wedge^{2}(I^{*})$. There is an open  $B_{I}$-orbit $O\subset \wedge^{2}(I^{*})$. We will exhibit $2^{m}$ $B_{I}$-orbits on $I^{*}\times O$ with unipotent stabilizers, and hence all of them are relevant for the euphotic \aud\ $(\bK_{S}, \chi_{S})$. 

The Borel subgroup $B_{I}$ is the stabilizer of a flag $0\subset I_{1}\subset\cdots\subset I_{n-1}\subset I_{n}=I$. A point $\om\in O\subset\wedge^{2}(I^{*})$ is a symplectic form on $I$ that is non-degenerate on $I_{2i}$ for each $i=1,\cdots, m$. Fix such an $\om$, and we get an orthogonal decomposition $I=N_{1}\op N_{2}\op\cdots \op N_{m}$  under $\om$ with $\dim N_{i}=2$ such that $I_{2i}=N_{1}\op\cdots\op N_{i}$. Each $N_{i}$ has a line $\ell_{i}\subset N_{i}$ that is the image of $I_{2i-1}$. We have $\Stab_{B_{I}}(\om)\cong \prod_{i=1}^{m}\SL(N_{i})$.  The $B_{I}$-orbits on $I^{*}\times O$ are in bijection with $\prod_{i=1}^{m}\SL(N_{i})$-orbits on $I^{*}=\op_{i=1}^{m}N_{i}^{*}$. Let $\e: \{1,\cdots, m\}\to \{0,1\}$ be any map. Let $I^{*}_{\ep}\subset I^{*}$ be the set of vectors $(\xi_{i})_{1\le i\le m}$ with $\xi_{i}\in N^{*}_{i}-\{0\}$, such that $\xi_{i}(\ell_{i})=0$ if $\e(i)=0$ or $\xi_{i}(\ell_{i})\ne0$ if $\e(i)=1$. Then $I^{*}_{\e}$ is  a $\prod_{i=1}^{m}\SL(N_{i})$-orbit with unipotent stabilizer, which corresponds to a $B_{I}$-orbit on $I^{*}\times O$ with unipotent stabilizer.
\end{exam}

\begin{exam}\label{ex:mot} In \cite{Y-motive}, we constructed a geometrically rigid \aud\ for $X=\PP^{1}_{k}$ and $S=\{0,1,\infty\}$ for groups $G$ of type $A_{1}, D_{2n}, E_{7}, E_{8}, G_{2}$ that only involve mutiplicative characters. One interesting observations is that the construction makes sense over any field $k$ with $\chk\ne2$.  In particular, in \cite{Y-motive} it was applied to $k=\QQ$ to obtain the first examples of motives with $E_{7}$ and $E_{8}$ as motivic Galois groups, and to solve the inverse Galois problem for $E_{8}(\FF_{\ell})$ for sufficiently large prime $\ell$.

Assume $G$ is simply-connected, and  the longest element $w_{0}$ in the Weyl group $W$ of $G$ acts by inversion on $T$.

%Equivalently, this means that the Chevalley involution of $G$ is inner. Recall that a Chevalley involution of $G$ is an involution $\tau$ such that $\dim G^{\tau}$ has the minimal possible dimension, namely $\#\Phi^{+}$ (the number of positive roots of $G$). All Chevalley involutions are conjugate to each other under $G^{\ad}(\kbar)$. 

%When the Chevalley involution of $G$ is not inner, i.e., $G$ is of type $A_{n}$ ($n\geq2$), $D_{2n+1}$ or $E_{6}$, one should consider a quasi-split form of $G$ over the function field $F=k(t)$ ramified at $0$ and $\infty$. We do not discuss it here.

Up to conjugacy, there is a unique parahoric subgroup $\bP\subset L_{0}G$ such that its reductive quotient $L_{\bP}$ is isomorphic to the fixed point subgroup $G^{\tau}$ of a Cartan involution corresponding to the split real form of $G$. For example, we can take $\bP$ to be the parahoric subgroup corresponding to the facet containing the element $\rho^{\vee}/2$ in the $T$-apartment of the building of $L_{0}G$ ($\rho^{\vee}$ is half the sum of positive coroots of $G$). 

The Dynkin diagram of the reductive quotient $L_{\bP}\cong G^{\tau}$ of $\bP$ is obtained by removing one or two nodes from the extended Dynkin diagram of $G$. We tabulate the type of $L_{\bP}$ and the nodes to be removed in each case.

\begin{tabular}{|c|c|c|}
\hline
$G$ & $L_{\bP}$ & nodes to be removed\\ \hline
$B_{2n}$ & $B_{n}\times D_{n}$ & the $(n+1)$-th counting from the short node \\ \hline
$B_{2n+1}$ & $B_{n}\times D_{n+1}$ & the $(n+1)$-th counting from the short node \\ \hline
$C_{n}$ & $A_{n-1}\times\Gm $ & the two ends \\ \hline
$D_{2n}$ & $D_{n}\times D_{n}$ & the middle node \\ \hline 
$E_{7}$ & $A_{7}$ & the end of the leg of length 1 \\ \hline
$E_{8}$ & $D_{8}$ & the end of the leg of length 2 \\ \hline
$F_{4}$ & $A_{1}\times C_{3}$ & second from the long node end\\ \hline
$G_{2}$ & $A_{1}\times A_{1}$ & middle node \\ \hline
\end{tabular}

{\bf Fact}:  If $G$ is not of type $C$, then $L^{\sc}_{\bP}\to L_{\bP}$ is a double cover. Even if $G$ is of type $C_{n}$, $L_{\bP}\cong\GL_{n}$ still admits a unique nontrivial double cover.

Therefore, in all cases, there is a canonical nontrivial double cover $v:\wt{L}_{\bP}\to L_{\bP}$. In particular,
\begin{equation}\label{double L}
(v_{!}\Qlbar)_{\textup{sgn}}\in \CS(L_{\bP})
\end{equation}
(here $\textup{sgn}$ denotes the nontrivial character of $\ker(v)(k)=\{\pm1\}$). 
%We have an exact sequence
%\begin{equation*}
%1\to\mu_{2}(k)\to\wt{L}_{\bP}(k)\to L_{\bP}(k)\to\cohog{1}{k,\mu_{2}}=\{\pm1\}.
%\end{equation*}
%The character $\chi$ corresponding to $\calK_{0}$ is given by the last arrow above.

Let $\bK_{0}=\bP\subset L_{0}G$ be a parahoric subgroup of the type defined above. Let $\cK_{0}$ be the pullback of the local system \eqref{double L}. Let $\bK_{\infty}=\bP_{\infty}\subset L_{\infty}G$ be the parahoric subgroup of the same type as $\bP$. Let $\bK_{1}=\bI_{1}\subset L_{1}G$ be an Iwahori subgroup. Let $\cK_{\infty}$ and $\cK_{1}$ be trivial. 

The central character compatible with $(K_{S}, \chi_{S})$ above is unique, and it exists if and only if $\chi|_{Z(k)}=1$, which can always be achieved by passing to a quadratic extension $k'/k$.

In \cite{Y-motive}, it is shown that if $G$ is either simply-laced or of type $G_{2}$ (again assume $w_{0}=-1$, so $G$ is of type $A_{1}, D_{2n}, E_{7}, E_{8}$ and $G_{2}$), then the geometric \aud\ $(\cK_{S},\cK_{S})$ defined above is weakly rigid. Indeed, there is a unique relevant $\ov k$-point in $\Bun_{G}(\bK_{S})$ which is the unique open point. 

The moduli stack $\Bun_{G}(\bP_{0},\bP_{\infty})$ contains an open substack isomorphic to $\pt/L_{\bP}$. The preimage of $\pt /L_{\bP}$ in $\Bun_{G}(\bK_{S})$ is isomorphic to $L_{\bP}\backslash G/B$. Since $L_{\bP}$ is a symmetric subgroup of $G$, it acts on the flag variety of $G$ with an open orbit $O\subset  G/B$. We thus get an open point
\begin{equation*}
j: L_{\bP}\backslash O\incl \Bun_{G}(\bK_{S}).
\end{equation*}
When $G$ is either simply-laced or of type $G_{2}$, this turns out to be the unique relevant point.

The stabilizer $A_{u}$ of $L_{\bP}$ on any point $u\in O$ is canonically isomorphic to $T[2]$. The category $D_{c}(\bK_{S},\cK_{S})$ is more interesting. Taking the preimage of $A_{u}$ in the double cover $\tilL_{\bP}$ we get a central extension
\begin{equation*}
1\to \mu_{2}\to \tilA_{u}\to T[2]\to1.
\end{equation*}
For simplicity let's assume $G=E_{8}$. Then $\tilA_{u}$ is a finite Heisenberg $2$-group with $\mu_{2}$ as the center. It has a unique irreducible $\Qlbar$-representation $V$ with nontrivial central character. This representation gives an irreducible local system on the preimage of the open point $L_{\bP}\backslash O$ in $\Bun_{G}(\bK^{+}_{S})$, whose extension to $\Bun_{G}(\bK^{+}_{S})$ by zero gives an object $\calF\in D_{c}(\kbar;\bK_{S},\calK_{S})$. It turns out $\cF$ is the unique simple perverse sheaf in $D_{c}(\kbar;\bK_{S},\calK_{S})$. 

When $G$ is not of type $E_{8}$, the center of $\tilA_{u}$ is larger. One can construct a simple  perverse sheaf for each central character of $\tilA_{u}$ whose restriction to $\mu_{2}$ is nontrivial. 
\end{exam}

\section{Computing eigen local systems}\label{s:Hk}

In this section we explain how to get $\dG$-local systems out of  rigid \auda.

\subsection{From eigenforms to local systems}\label{ss:global Langlands}
We give a very brief review of how Langlands correspondence works over function fields in the automorphic to Galois direction. 

\sss{Eigenvalues}
Given a Hecke eigenform $f$, then for $x\in |X|-S$ (where $S$ is a finite set of places), we have a character $\s_{x}: \cH_{G(\cO_{x})}\to \Qlbar$  of the spherical Hecke algebra $\cH_{G(\cO_{x})}$ such that 
\begin{equation*}
f\star h_{x}=\s_{x}(h_{x})f, \quad \forall h_{x}\in \cH_{G(\cO_{x})}.
\end{equation*}

We recall the Satake isomorphism:
\begin{equation*}
\cH_{G(\cO_{x})}\cong \Qlbar[\xcoch(T)]^{W}.
\end{equation*}
Each homomorphism $\s_{x}: \cH_{G(\cO_{x})}\to \Qlbar$ gives rise to a $W$-orbit on $\dT$, which is the same datum as a semisimple conjugacy class $[\phi_{x}]\in \dG$.  What property does the collection $\{\phi_{x}\}_{x\in |X|-S}$ satisfy?

In the simplest case $G=\PGL_{2}$ and $\dG=\SL_{2}$, $\s_{x}$ is determined by its value at the characteristic function of $G(\cO_{x})\mat{t_{x}}{0}{0}{1}G(\cO_{x})$; this value is equal to the number $a_{x}=\Tr(\phi_{x})$.  The $\{a_{x}\}$ is the function field analogues of Fourier coefficients $a_{p}$ for a Hecke eigen modular form for $\PGL_{2}$ over $\QQ$. Can we get arbitrary collection of traces $\{a_{x}\}_{x\in |X|-S}$ from eigenforms?

The Langlands correspondence predicts that the collection $\{\phi_{x}\}$ of conjugacy classes in $\dG$ must come from a single global object, namely a $\dG$-local system on $X-S$. 

\sss{Local systems} A rank $n$ $\Qlbar$-local system on $U$ for the \'etale topology is slightly technical to define: it arises from an inverse system of locally constant $\ZZ/\ell^{m}\ZZ$ sheaves on $U$ free of rank $n$.  Let $\Loc(U,\Qlbar)$ be the tensor category of $\Qlbar$-local systems on $U$  of varying rank. This is a tensor category linear over $\Qlbar$.

Let $u\in U$ be a geometric point $u\in U$. We have the \'etale fundamental group $\pi_{1}(U,u)$, which is a profinite group. A rank $n$ local system $E$ on $U$ with $\Qlbar$-coefficients gives rise to a continuous representation of $\pi_{1}(U,u)$ on a $n$-dimensional $\Qlbar$-vector space (with $\ell$-adic topology), called the monodromy representation of $E$. This representation is constructed by looking at the action of $\pi_{1}(U,u)$ on the stalk $E_{u}$.

The above discussion gives an equivalence of tensor categories
\begin{equation}\label{fiber u}
\om_{u}: \Loc(U,\Qlbar)\isom \Rep_{\cont}(\pi_{1}(U,u),\Qlbar).
\end{equation}

Note that $\pi_{1}(U,u)$ is a quotient of the absolute Galois group $\Gal(F^{\sep}/F)$ of the function field $F=k(X)$. Therefore, a rank $n$ local system on $U$ gives rise to an $n$-dimensional continuous representation of $\Gal(F^{\sep}/F)$.

%We shall denote by $\pi^{\geom}_{1}(U,u)$ the fundamental group of $\pi_{1}(U\otimes_{k}\kbar,u)$, and call it the 
%{\em geometric fundamental group} of $U$ (with respect to the base point $u$). This is a normal subgroup of $\pi_{1}(U,u)$ which fits into an exact sequence (\cite[V, Proposition 6.13]{SGA1})
%\begin{equation*}
%1\to \pi^{\geom}_{1}(U,u)\to\pi_{1}(U,u)\to\Gal(k^{s}/k)\to1.
%\end{equation*}

\sss{$\dG$-local systems} With the choice of a geometric point $u\in U$, a $\dG$-local system on $U$ is a continuous homomorphism
\begin{equation}\label{rho to dG}
\rho:\pi_{1}(U,u)\to \dG(\Qlbar).
\end{equation}
Therefore, a $\GL_{n}$-local system is the same datum as a rank $n$ local system on $U$. 

A more canonical definition can be given using the Tannakian formalism.  Let $\Rep(\dG,\Qlbar)$ be the tensor category of algebraic representations of $\dG$ on finite-dimensional $\Qlbar$-vector spaces.  Then a $\dG$-local system on $U$ is the same datum as a tensor functor
\begin{equation}\label{ten loc}
E: \Rep(\dG,\Qlbar)\to \Loc(U,\Qlbar).
\end{equation}

The two notions of $H$-local systems are equivalent. Given a representation $\rho$ as in \eqref{rho to dG}, we define a tensor functor $E$ by assigning to $V\in\Rep(H,\Qlbar)$ the local system with monodromy representation
\begin{equation*}
\rho_{V}:\pi_{1}(U,u)\xrightarrow{\rho}\dG(\Qlbar)\to\GL(V).
\end{equation*}
Conversely, given a tensor functor $E$ as in \eqref{ten loc}, using the equivalence \eqref{fiber u}, $E$ can be viewed as a tensor functor $\Rep(\dG,\Qlbar)\to \Rep_{\cont}(\pi_{1}(U,u),\Qlbar)$. Tannakian formalism \cite{DM} then implies that such a tensor functor comes from a group homomorphism $\rho$ as in \eqref{rho to dG}, well-defined up to conjugacy. 

\begin{exam} For $\dG=\SO_{n}$, a $\dG$-local system on $U$ is the same thing as a rank $n$ local system $E$ on $U$ together with a self-adjoint isomorphism $b:E\isom E^{\vee}$.\end{exam}

Below we will get more technical because we will be using sheaf theory on algebraic stacks such as $\Bun_{G}$ and its Hecke correspondences to compute the $\dG$-local systems attached to an eigenform. The ideas come from the geometric Langlands program originated from the works of Drinfeld, Deligne, Laumon, etc. The main result is Theorem \ref{th:eigencat}, which roughly says that for rigid \aud, one can construct the corresponding local system explicitly.

%We will be working with derived categories of $\Qlbar$-sheaves and perverse sheaves on algebraic stacks. For foundational material we refer to the articles of Laszlo and Olsson \cite{LO} and Y.Liu and W.Zheng \cite{LZ}. {\bf Note: All sheaf-theoretic functors are derived}.

\subsection{The Satake category}\label{Sat} 
The Satake category is an upgraded version of the spherical Hecke algebra $\cH_{G(\calO_{x})}$ under the sheaf-to-function correspondence. In this subsection we  $G$ is split reductive.

Let $LG$ and $L^{+}G$ be the group objects over $k$ whose $R$-points are $G(R\lr{t})$ and $G(R\tl{t})$ respectively.  The quotient $\Gr=LG/L^{+}G$ is  called the {\em affine Grassmannian} of $G$, and it is an infinite union of projective schemes.  Then $L^{+}G$ acts on $\Gr$ via left translation. The $L^{+}G$-orbits on $\Gr$ are indexed by  dominant coweights $\l\in\xcoch(T)^+$. The orbit containing the element $t^{\l}\in T(k\lr{t})$ is denoted by $\Gr_{\l}$ and its closure is denoted by $\Gr_{\leq\l}$. We have $\dim\Gr_{\l}=\j{2\rho,\l}$, where $2\rho$ is the sum of positive roots in $G$. Each $\Gr_{\leq\l}$ is a projective but usually singular variety over $k$. We denote the intersection complex of $\Gr_{\leq\l}$ by $\IC_{\l}$.

%: this is the middle extension of the shifted and twisted constant sheaf $\Qlbar[\j{2\rho,\l}](\j{\rho,\l})$ on $\Gr_{\l}$ (note that $\j{\rho,\l}\in\ZZ$ since $G$ is simply-connected).

Let $\Sat_{\ov k}=\Perv_{(L^{+}G)_{\ov k}}(\Gr_{\ov k})$ be the category of $(L^{+}G)_{\ov k}$-equivariant perverse sheaves on $\Gr_{\ov k}=\Gr\ot_{k}\ov k$ (with $\Qlbar$-coefficients) supported on finitely many $(L^{+}G)_{\ov k}$ orbits. In \cite{Lu}, \cite{Ginz} and \cite{MV}, it was shown that when $k$ is algebraically closed, $\Sat_{\ov k}$ carries a natural tensor structure, such that the global cohomology functor $h=H^*(\Gr_{\ov k},-):\Sat_{\ov k}\to\Vect(\Qlbar)$ is a fiber functor. It is also shown that the Tannakian group of the tensor category $\Sat_{\ov k}$ is the Langlands dual group $\dG$. The Tannakian formalism gives the {\em geometric Satake equivalence} of tensor categories
\begin{equation*}
\Sat_{\ov k}\cong\Rep(\dG,\Qlbar).
\end{equation*}

Similarly we define $\Sat_{k}= \Perv_{L^{+}G}(\Gr)$; this is also a tensor category with fiber functor $H^*(\Gr_{\ov k},-)$, but its Tannakian group is larger than $\dG$ (one can show its Tannakian group is the algebraic envelope of $\dG\times\Gk$). Define $\Sat^{0}_{k}\subset\Sat_{k}$
to be the full subcategory consisting of direct sums of the {\em normalized} $\IC_{\l}$'s whose restriction to $\Gr_{\l}$ is the Tate twisted sheaf $\Qlbar[\j{2\rho,\l}](\j{\rho, \l})$ (it requires choosing a square root of $q$ in $\Qlbar^{\times}$ if $\rho\notin \xch(T)$).  It turns out that $\Sat^{0}_{k}$ is closed under the tensor structure, and we have an equivalence of tensor categories
\begin{equation*}
\Sat^{0}_{k}\cong\Rep(\dG,\Qlbar).
\end{equation*}
For $V\in\Rep(\dG,\Qlbar)$, we denote the corresponding object in $\Sat_{k}^{0}$ by $\IC_{V}$.

\begin{exam} For $G=\GL_{n}$, $\Gr(k)$ parametrizes $\cO=k\tl{t}$-lattices in $k\lr{t}^{n}$. For  $\l=(1,\cdots, 1,0,\cdots, 0)$ with $i$ $1$'s and $n-i$ 0's, $\Gr_{\l}=\Gr_{\le \l}\cong \Gr(n,i)$. In fact, $\Gr_{\l}(k)$ consists of lattices $\L$ such that $t^{-1}\cO^{n}\supset \L\supset \cO^{n}$ and $\dim_{k}(\L/\cO^{n})=i$.
\end{exam}

\begin{exam} For $G=\Sp(V)$, $V$ a symplectic space of dimension $2n$, $\Gr(k)$ parametrizes $\cO=k\tl{t}$-lattices $\L$ in $V\ot k\lr{t}$ such that the symplectic form restricts to a perfect pairing on $\L$ (these are called self-dual lattices).  Let $\L_{0}=V\ot\cO$ be the standard self-dual lattice. For $\l=(1,0,\cdots,0)\in\ZZ^{n}$, $\Gr_{\l}(k)$ consists of self-dual lattices $\L$ such that $\dim_{k}(\L/\L\cap \L_{0})= 1$. The orbit closure $\Gr_{\le\l}=\Gr_{\l}\cup \{\L_{0}\}$. There is a map $\Gr_{\l}\to \PP(V)$ sending $\L$ to the line in $V$ that is the image of $\L\mod t^{-1}\subset t^{-1}V$. The fiber over a point $\ell\in \PP(V)$ is the line $\Hom(\ell, V/\ell^{\bot})\cong \ell^{\ot -2}$ (where $\ell^{\bot}$ is defined using the symplectic form on $V$). Therefore $\Gr_{\l}$ is isomorphic to the total space of $\cO(2)$ over $\PP^{2n-1}$.
\end{exam}

\begin{ex} For $G=\SO(V)$ with $\dim V=2n$ or $2n+1$, describe $\Gr_{\l}$ for $\l=(1,0,\cdots, 0)\in \ZZ^{n}$.
\end{ex}

\subsection{Geometric Hecke operators} We consider the situation of \S\ref{sss:auto sheaves}. In particular, we have a geometric automorphic datum $(\bK_{S},\calK_{S})$, and moduli stacks $\Bun_{G}(\bK_{S})$ and $\Bun_{G}(\bK^{+}_{S})$.

\subsubsection{Hecke correspondence} Consider the following diagram
\begin{equation}\label{diag Hk}
\xymatrix{& \Hk_{G}(\bK^{+}_{S})\ar[dl]_{\oll{h}}\ar[dr]^{\orr{h}}\ar[rr]^{\pi} & & U:=X-S \\
\Bun_{G}(\bK^{+}_{S}) & & \Bun_{G}(\bK^{+}_{S})}
\end{equation}
Here, the stack $\Hk_{G}(\bK^{+}_{S})$ classifies the data $(x, \calE,\calE', \tau)$ where $x\in U:=X-S$, $\calE,\calE'\in\Bun_{G}(\bK^{+}_{S})$ and $\tau:\calE|_{X-\{x\}}\isom\calE'|_{X-\{x\}}$ is an isomorphism of $G$-torsors over $X-\{x\}$ preserving the $\bK^{+}_{x}$-level structures at each $x\in S$. The morphisms $\oll{h}$, $\orr{h}$ and $\pi$ send $(x,\calE,\calE',\tau)$ to $\calE,\calE'$ and $x$ respectively.

For $x\in U$, we denote its preimage under $\pi$ by $\Hk_{G,x}(\bK^{+}_{S})$. We have an evaluation morphism
\begin{equation}\label{evx}
\ev_{x}: \Hk_{G,x}(\bK^{+}_{S})\to L^{+}_{x}G\backslash L_{x}G/L^{+}_{x}G.
\end{equation}
In fact, for a point $(x,\calE,\calE',\tau)\in\Hk_{G,x}(\bK^{+}_{S})$, if we fix trivializations of $\calE$ and $\calE'$ over $\Spec\calO_{x}$, the isomorphism $\tau$ restricted to $\Spec F_{x}$ is an isomorphism between the trivial $G$-torsors over $\Spec F_{x}$, hence given by a point $g_{\tau}\in L_{x}G$. Changing the trivializations of $\calE|_{\Spec\calO_{x}}$ and $\calE'|_{\Spec\calO_{x}}$ will result in left and right multiplication of $g_{\tau}$ by elements in $L_{x}^{+}G$. Therefore we have a well-defined morphism $\ev_{x}$ as above between stacks.

As $x$ moves along $U$, we may identify the target of \eqref{evx} as $L^{+}G\backslash LG/L^{+}G$ by choosing a local coordinate $t$ at $x$. Modulo the ambiguity caused by the choice of the local coordinates, we obtain a well defined morphism 
\begin{equation}\label{ev Hk}
\ev: \Hk_{G}(\bK^{+}_{S})\to \left[\frac{L^{+}G\backslash LG/L^{+}G}{\Aut^{+}}\right],
\end{equation}
where $\Aut^{+}$ is the group scheme over $k$ of continuous ring automorphisms of $k\tl{t}$, and it acts on $LG$ and $L^{+}G$ via its action on $k\tl{t}$.

\subsubsection{Geometric Hecke operators}
For each object $V\in\Rep(\dG,\Qlbar)$, the corresponding object $\IC_{V}\in\Sat$ under the geometric Satake equivalence defines a complex on the quotient stack $\left[\frac{L^{+}G\backslash LG/L^{+}G}{\Aut^{+}}\right]$.  The geometric Hecke operator associated with $V$ is the functor
\begin{eqnarray*}
\TT_{V}: D_{(L_{S},\calK_{S})}(\Bun_{G}(\bK^{+}_{S})\times U)&\to& D_{(L_{S},\calK_{S})}(\Bun_{G}(\bK^{+}_{S})\times U)\\
\calF &\mapsto& (\orr{h}\times\pi)_{!}\left((\oll{h}\times\pi)^{*}\calF\otimes\ev^{*}\IC_{V})\right).
\end{eqnarray*}

The composition of these functors are compatible with the tensor structure of $\Sat$: there is a natural isomorphism of functors
\begin{equation*}
\TT_{V}\circ\TT_{W}\cong \TT_{V\otimes W}, \quad \forall V,W\in\Rep(\dG,\Qlbar)
\end{equation*}
which is compatible with the associativity constraint of the tensor product in $\Rep(\dG,\Qlbar)$ and the associativity of composition of functors $\TT_{V_{1}}\circ\TT_{V_{2}}\circ\TT_{V_{3}}$ in the obvious sense.

\begin{theorem}[{\cite[Corollary A.4.2]{JY}}, imprecise statement]\label{th:eigencat} Let $(\bK_{S}, \cK_{S})$ be a geometrically rigid \gaud.  Then one can decompose $\Perv_{c}(\ov k;\bK_{S}, \cK_{S})$ (perverse sheaves in $D_{c}(\ov k;\bK_{S}, \cK_{S})$) into a finite direct sum of subcategories stable under the geometric Hecke operators, such that each direct summand gives rise to a $\dG$-local system on $U$.
\end{theorem}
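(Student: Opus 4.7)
The plan is to show that geometric rigidity automatically promotes every simple object of $\Perv_{c}(\ov k;\bK_{S},\cK_{S})$ into a Hecke eigensheaf, and then to assemble the eigenvalues into a tensor functor $\Rep(\dG,\Qlbar)\to \Loc(U,\Qlbar)$ which Tannakian formalism converts into a $\dG$-local system.

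First I would analyze the structure of $\Perv_{c}(\ov k;\bK_{S},\cK_{S})$ itself. By geometric rigidity, each connected component of $\Bun_{G}(\bK_{S}^{+})$ contains a unique relevant $\ov k$-point; call these $\cE_{1},\dots,\cE_{N}$. Lemma \ref{l:irr sh} forces every object of $D_{c}(\ov k;\bK_{S},\cK_{S})$ to have stalks and costalks concentrated on $\{\cE_{i}\}$. Combined with the $(L_{S},\cK_{S})$-equivariance structure this reduces $\Perv_{c}(\ov k;\bK_{S},\cK_{S})$ to a finite direct sum of categories of twisted finite-dimensional representations of $\pi_{0}(\Aut(\cE_{i}))$, with the twist determined by $\ev_{S,\cE_{i}}^{*}\cK_{S}$. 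In particular this is a finite semisimple $\Qlbar$-linear category, which I will call $\cC$.

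Second I would exploit the action of the geometric Hecke operators $\TT_{V}$ on $\cC\boxtimes \Loc(U,\Qlbar)$. By geometric Satake, convolution with $\IC_{V}\in \Sat^{0}_{k}$ is $t$-exact, preserves the $(L_{S},\cK_{S})$-equivariance, and satisfies $\TT_{V}\c\TT_{W}\cong \TT_{V\ot W}$ together with the usual symmetry and associativity constraints. Because $\cC$ is finite semisimple and this action is by a symmetric monoidal category, I can decompose $\cC$ into finitely many Hecke-stable blocks $\cC=\bigoplus_{a}\cC_{a}$ by isotypic decomposition under the (commutative) $\Sat^{0}_{k}$-action. Next, on each block I would fix a simple object $\cF_{a}\in \cC_{a}$ and prove an ``eigensheaf identity'' $\TT_{V}(\cF_{a})\cong \cF_{a}\boxtimes E^{a}_{V}$ for some complex $E^{a}_{V}$ on $U$. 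Stability of $\cC_{a}$ under $\TT_{V}$ and finiteness of its simples imply that $(\orr{h}\times\pi)_{!}((\oll{h}\times\pi)^{*}\cF_{a}\ot \ev^{*}\IC_{V})$ decomposes fiberwise over $U$ into copies of $\cF_{a}$; assembling the multiplicities over $U$ gives $E^{a}_{V}$. The tensor identities transfer to give a tensor functor $V\mapsto E^{a}_{V}$, and Tannakian reconstruction (after lisseness is established) produces the advertised $\dG$-local system on $U$.

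The main obstacle will be the last step: upgrading $E^{a}_{V}$ from a constructible complex to an honest lisse sheaf on $U$, and controlling its tensor behavior. The lisseness uses the universal local acyclicity of $\IC_{V}$ with respect to the projection $\pi$ on the Hecke stack, combined with a local triviality analysis of the map $\orr{h}$ along $U$; morally one must realize the Hecke operator as pulled back from the Beilinson--Drinfeld Grassmannian so that variation in $x\in U$ is manifestly controlled. The other delicate point is that a Hecke-stable block need not be $\Sat^{0}_{k}$-\emph{isotypic} a priori; one must refine the decomposition of $\cC$ far enough that a Schur-type argument pins down $E^{a}_{V}$ uniquely from $\cF_{a}$. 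Once these are in place, compatibility of the constraints in $\Sat^{0}_{k}\cong \Rep(\dG,\Qlbar)$ with the composition of Hecke functors makes $V\mapsto E^{a}_{V}$ a symmetric monoidal functor, and Tannakian duality \cite{DM} delivers the $\dG$-local system, completing the argument as in \cite[Appendix A]{JY}.
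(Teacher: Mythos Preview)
Your outline correctly identifies that geometric rigidity makes $\cC:=\Perv_{c}(\ov k;\bK_{S},\cK_{S})$ a finite semisimple category (this is Lemma~\ref{l:relpt twrep} component-by-component), and that the endgame is Tannakian.  But the central step --- decomposing $\cC$ so that each simple becomes a Hecke eigensheaf --- does not follow from the tools you invoke, and this is exactly where the paper points to the $E_{2}$-action.

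The problem is your sentence ``stability of $\cC_{a}$ under $\TT_{V}$ and finiteness of its simples imply that \dots\ decomposes fiberwise over $U$ into copies of $\cF_{a}$.''  It does not: it decomposes into copies of \emph{all} simples of $\cC_{a}$, as Proposition~\ref{p:loc} makes explicit via the matrix $({}_{\z}E^{\l}_{\y})_{\z,\y}$.  Knowing only that $\cC$ is a (finite semisimple) module category over the symmetric monoidal category $\Rep(\dG)$ is not enough to force one-dimensional blocks.  For instance $\Rep(H)$ for any closed subgroup $H\subset\dG$ is an indecomposable $\Rep(\dG)$-module category on which the action is not by a single tensor character.  Your ``isotypic decomposition under the (commutative) $\Sat^{0}_{k}$-action'' is not a well-defined operation on module categories, and the acknowledged need to ``refine the decomposition far enough'' has no mechanism in your proposal.

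What the actual argument uses, as the paper says right after the statement, is that the Hecke action \emph{spreads over $U$}: modifying at two moving points $x,y\in U$ and letting them collide on the Beilinson--Drinfeld Grassmannian upgrades the $\Rep(\dG)$-action on $\cC$ to an $E_{2}$-action compatible with the commutativity constraint of $\Sat$.  Concretely, this forces the endofunctors $\TT_{V}$ (and all natural transformations among them) to land in a \emph{commutative} algebra of endofunctors of $\cC$, so the induced ring of endomorphisms of each simple is commutative and Schur's lemma then does apply; simultaneously the same factorization structure packages the eigenvalues into local systems on $U$ and supplies the symmetric monoidal constraints on $V\mapsto E_{V}$.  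You gesture at Beilinson--Drinfeld only for lisseness; in fact it is the engine behind the decomposition itself.  Without invoking the multi-point Hecke/$E_{2}$ structure, the argument has a genuine gap at its most important step.
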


The proof uses the structure of an $E_{2}$-action of $\Rep(\dG)$ on $\Perv_{c}(\ov k;\bK_{S}, \cK_{S})$ that ``spreads over'' the curve $U$. Such a structure comes from versions of geometric Hecke operators modifying at two or more points on $U$.

A special case is when $\Perv_{c}(\ov k;\bK_{S}, \cK_{S})$ has a single simple object $\cF$ (which we may assume is defined over $k$). In this case, Theorem \ref{th:eigencat} says the following: $\cF$  is a {\em Hecke eigensheaf} in the following sense. For every $V\in \Rep(\dG,\Qlbar)$, there is an isomorphism
\begin{equation*}
\varphi_{V}: \TT_{V}(\calF\boxtimes {\Qlbar})\cong\calF\boxtimes E_{V}
\end{equation*}
for some local system $E_{V}\in \Loc(U,\Qlbar)$. Moreover the assignment $V\mapsto E_{V}$ gives a tensor functor $E:\Rep(\dG,\Qlbar)\to \Loc(U)$, hence a $\dG$-local system $E$.

\subsection{Computing local systems --  a simple case}\label{ss:comp loc simple} 
Fix a geometric automorphic datum $(\bK_{S}, \calK_{S})$.
\sss{Assumptions} We assume $G$ is simply-connected and $(\bK_{S},\cK_{S})$ is geometrically rigid. Since $\Bun_{G}(\bK_{S})$ is connected, it has a unique relevant point $\cE$. We further assume that $\Aut(\cE)$ is trivial. This implies that $\cE$ is an open point in $\Bun_{G}(\bK_{S})$.

\sss{} Let $\frG$ be the group ind-scheme over $U$ whose fiber over $x\in U$ is the automorphism group of $\cE|_{X-\{x\}}$ as a $G$-torsor with $\bK_{S}$-level structures. The evaluation map along $S$ (well-defined up to conjugacy) can be extended to $\frG$:
\begin{equation*}
\ev_{S,\cE}: \frG\to \prod_{x\in S}\bK_{x}\ot \ov k\to L_{S}\ot \ov k.
\end{equation*}

For any dominant coweight $\l\in\xcoch(T)^{+}$, let $\frG^{\le \l}\subset \frG$ be the closed subscheme whose fiber over $x\in U$ are those automorphisms of $\cE|_{X-\{x\}}$ with modification type $\le \l$ at $x$. Thus $\frG^{\le \l}_{x}$ can be identified with an open subscheme of $\Gr_{\le \l}$.  

\begin{prop}\label{p:loc simple} Under the assumption that $G$ is simply-connected and $\Aut(\cE)$ is trivial, $\Perv_{c}(\ov k; \bK_{S}, \cK_{S})$ contains a unique simple object $\cF$, and it is a Hecke eigensheaf. The corresponding $\dG$-local system $E$ is described as follows.

For a  dominant coweight $\l\in\xcoch(T)^{+}$ and the corresponding irreducible representation $V_{\l}$ of $\dG$, we have
\begin{equation*}
E_{V_{\l}}\cong \pi^{\l}_{!}(\ev_{S,\cE}^{*}\cK_{S}\ot \IC_{\frG^{\le \l}}[-1])
\end{equation*}
where $\pi^{\l}: \frG^{\le \l}\to U$ is the projection (it contains the implicit statement that the right side is concentrated in degree zero).
\end{prop}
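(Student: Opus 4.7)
The plan is to first pin down the unique simple object $\cF$ using geometric rigidity, then verify the Hecke eigensheaf property by a support argument, and finally extract the formula for $E_{V_\l}$ by proper base change along the Hecke correspondence.

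To start, let $j:L_S\hookrightarrow \Bun_G(\bK_S^+)$ denote the preimage of $\{\cE\}\subset \Bun_G(\bK_S)$; since $\Aut(\cE)$ is trivial and $\cE$ is open, $j$ is the inclusion of an open substack isomorphic to the algebraic group $L_S$ acting freely on itself by translation. By Lemma \ref{l:irr sh}, every $\cG\in \Perv_c(\ov k;\bK_S,\cK_S)$ is supported inside $j(L_S)$; descent along $L_S\to\pt$ identifies the category of $(L_S,\cK_S)$-equivariant perverse sheaves on $L_S$ with $\Vect(\Qlbar)$, giving a unique simple object $\cF$ (essentially $\cK_S$ shifted into perverse degree and extended to $\Bun_G(\bK_S^+)$).

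For the eigensheaf property, note that $\TT_V$ preserves $D_{(L_S,\cK_S)}(\Bun_G(\bK_S^+)\times U)$, and applying Lemma \ref{l:irr sh} fiberwise over each $u\in U(\ov k)$ shows that $\TT_V(\cF\boxtimes\Qlbar)$ is supported on $j(L_S)\times U$. The descent argument, applied relative to $U$, then forces $\TT_V(\cF\boxtimes\Qlbar)\cong \cF\boxtimes E_V$ for a well-defined complex $E_V$ on $U$; functoriality $\TT_V\c \TT_W\cong \TT_{V\ot W}$ promotes $V\mapsto E_V$ to a tensor functor $\Rep(\dG,\Qlbar)\to D^b(U,\Qlbar)$, which gives the desired $\dG$-local system once $E_{V_\l}$ is checked to be a shifted local system.

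To compute $E_{V_\l}$ explicitly, restrict $\TT_{V_\l}(\cF\boxtimes\Qlbar)$ to $\{\cE\}\times U$ and apply proper base change. The relevant piece of the Hecke stack is $Z_\l = \oll h^{-1}(j(L_S))\cap\orr h^{-1}(\{\cE\})$, whose fiber over $u\in U$ parametrizes pairs $(\cE',\tau)$ with $\cE'\in L_S$ and $\tau:\cE'|_{X-\{u\}}\cong \cE|_{X-\{u\}}$ of modification type $\le\l$ at $u$. Composing $\tau$ with the tautological identification of $\cE'$ with an $L_S$-translate of $\cE$ identifies $Z_\l$ with an $L_S$-torsor over $\frG^{\le\l}$. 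Under this identification $\oll h^*\cF$ descends along the $L_S$-direction to the character sheaf $\ev_{S,\cE}^*\cK_S$ on $\frG^{\le\l}$, while $\ev^*\IC_{V_\l}|_{\frG^{\le\l}}\cong \IC_{\frG^{\le\l}}[-1]$ because $\frG^{\le\l}$ is open in each fiber of $\Gr_{\le\l}\times U\to U$ and the $[-1]$ absorbs the extra one-parameter $U$-direction relative to the perverse normalization of $\IC_{V_\l}$. Proper base change then yields the claimed
\begin{equation*}
E_{V_\l}\cong \pi^\l_!\bigl(\ev_{S,\cE}^*\cK_S\ot \IC_{\frG^{\le\l}}[-1]\bigr).
\end{equation*}

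The main obstacle is the implicit cohomological-degree assertion that the right-hand side is concentrated in degree zero, i.e., that $E_{V_\l}$ is a genuine local system. The cleanest route is to use that $\TT_{V_\l}$ sends perverse sheaves to perverse sheaves up to the normalization shift of $\IC_{V_\l}$; applied to the perverse sheaf $\cF\boxtimes\Qlbar[1]$ on $\Bun_G(\bK_S^+)\times U$, this forces $\cF\boxtimes E_{V_\l}[1]$ to be perverse on $j(L_S)\times U$, so $E_{V_\l}$ must be a shifted local system on the smooth curve $U$. The part of the proof requiring the most care is the normalization bookkeeping (perverse shifts and Tate twists) together with the clean verification of the restriction identity $\ev^*\IC_{V_\l}|_{\frG^{\le\l}}\cong \IC_{\frG^{\le\l}}[-1]$; everything else is formal rigidity and base change.
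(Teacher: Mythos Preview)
Your outline is essentially correct and follows the natural direct route: pin down $\cF$ via Lemma~\ref{l:irr sh} and the openness of $\cE$, then use the same lemma fiberwise over $U$ together with the equivalence $\Perv_{(L_S,\cK_S)}(L_S)\simeq\Vect(\Qlbar)$ to force $\TT_{V}(\cF\boxtimes\Qlbar)\cong\cF\boxtimes E_V$, and finally compute $E_{V_\l}$ by base change. One small correction: your space $Z_\l=\oll h^{-1}(j(L_S))\cap\orr h^{-1}(\{\cE\})$ is not an $L_S$-torsor over $\frG^{\le\l}$ but is already isomorphic to $\frG^{\le\l}$ itself. Once a lift of $\cE$ to $\Bun_G(\bK_S^+)$ is fixed, an element $g\in\frG^{\le\l}_u$ determines $\cE'$ uniquely via $\ev_{S,\cE}(g)\in L_S$ (since $\tau$ must match the $\bK_S^+$-level structures along $S$). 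Under this identification $\oll h|_{Z_\l}$ becomes $\ev_{S,\cE}$ up to inversion, and $\oll h^*\cF$ restricts directly to $\ev_{S,\cE}^*\cK_S$ with no descent step needed. This actually streamlines your argument.

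The paper does not give a self-contained proof of this proposition; it is stated as a special case and the ambient Theorem~\ref{th:eigencat} is attributed to \cite[Corollary A.4.2]{JY}, whose proof sketch invokes the $E_2$-action of $\Rep(\dG)$ on $\Perv_c(\ov k;\bK_S,\cK_S)$ spread over $U$ (coming from Hecke modifications at several moving points). That machinery is what guarantees in general that the resulting functor $V\mapsto E_V$ lands in local systems and is genuinely monoidal. Your more elementary argument works cleanly here precisely because the target category is $\Vect(\Qlbar)$, so the monoidal structure and the degree-zero claim can be extracted from perversity of $\TT_{V_\l}$ as you indicate; in the general case of Proposition~\ref{p:loc} (nontrivial $\Gamma_\cE$) one really needs the extra structure to organize the $\{{}_\zeta E^\l_\eta\}$ into a single $\dG$-local system.
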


In particular, if $\l$ is minuscule so that $\frG^{\le\l}=\frG^{\l}$ is smooth of relative dimension $\j{\l,2\r}$ over $U$, then
\begin{equation*}
E_{V_{\l}}\cong \bR^{\j{\l,2\r}}\pi^{\l}_{!}\ev_{S,\cE}^{*}\cK_{S}.
\end{equation*}

\subsection{Computing local systems -- general case}
Now we drop the condition that $G$ be simply-connected (still assumed to be split semisimple) and the automorphism groups of the relevant points be trivial. The discussion in the general case is a bit technical.

\subsubsection{Twisted representations}\label{ss:tw rep} Let $\Gamma$ be a group and $\xi\in Z^{2}(\Gamma, \Qlbar^{\times})$ a cocycle such that $\xi_{1,\gamma}=\xi_{\gamma,1}=1$ for all $\gamma\in\Gamma$. A $\xi$-twisted representation of $\Gamma$ is a finite-dimensional $\Qlbar$-vector space $V$ with automorphisms $T_{\gamma}:V\to V$, one for each $\gamma\in\Gamma$, such that $T_{1}=\id_{V}$ and
\begin{equation*}
T_{\gamma\delta}=\xi_{\gamma,\delta}T_{\gamma}T_{\delta}, \forall\gamma,\delta\in\Gamma.
\end{equation*}
Let $\Rep_{\xi}(\Gamma,\Qlbar)$ be the category of $\xi$-twisted representations of $\Gamma$. This is a $\Qlbar$-linear abelian category which, up to equivalence, only depends on the cohomology class $[\xi]\in \cohog{2}{\Gamma, \Qlbar^{\times}}$.

A natural source of $2$-cocycles on $\G$ come from rank one character sheaves. We did not define rank one character sheaves for disconnected groups but the same definition works, except now they may have nontrivial automorphisms. Isomorphism classes in $\CS(\G)$ are in bijection with $\cohog{2}{\Gamma, \Qlbar^{\times}}$.

Suppose  $\calE\in\Bun_{G}(\bK_{S})(\kbar)$ be a relevant point for $(\bK_{S},\cK_{S})$. Let $A_{\cE}=\Aut(\cE)$ and $\G_{\cE}=\pi_{0}(\Aut(\calE))$. Since $\ev_{S,\ \calE}^{*}\calK_{S}$ is trivial on $\Aut^{\c}(\calE)$,  it descends to a rank one character sheaf on $\G_{\cE}$ and gives a cocycle $\xi\in Z^{2}(\G_{\cE}, \Qlbar^{\times})$ satisfying $\xi_{1,\g}=\xi_{\g,1}=1$ for all $\g\in \G_{\cE}$, whose cohomology class is well-defined.  

\begin{lemma}\label{l:relpt twrep} Let $\Perv_{c}(\ov k; \bK_{S},\cK_{S})_{\cE}$ be the category of perverse sheaves in $\Perv_{c}(\ov k; \bK_{S},\cK_{S})$ that have vanishing stalks outside the preimage of the relevant point $\cE$. Then  $\Perv_{c}(\ov k; \bK_{S},\cK_{S})_{\cE}\cong \Rep_{\xi}(\G_{\cE},\Qlbar)$.
\end{lemma}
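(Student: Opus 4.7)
The plan is to identify the preimage of $\cE$ in $\Bun_{G}(\bK^{+}_{S})$ explicitly as a quotient stack, reduce equivariant perverse sheaves on it to linear algebra, and then match the resulting structure against the definition of a $\xi$-twisted representation.

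First I would describe the preimage. Write $A=A_{\cE}=\Aut(\cE)$ and let $Z_{\cE}\subset\Bun_{G}(\bK^{+}_{S})\otimes\ov k$ be the preimage of the point $i_{\cE}:\pt/A\hookrightarrow\Bun_{G}(\bK_{S})$. Since $\Bun_{G}(\bK^{+}_{S})\to\Bun_{G}(\bK_{S})$ is an $L_{S}$-torsor, pullback gives $Z_{\cE}\cong[L_{S}/A]$, where $A$ acts on $L_{S}\otimes\ov k$ by left translation through $\ev_{S,\cE}:A\to L_{S}\otimes\ov k$. A perverse sheaf in $\Perv_{c}(\ov k;\bK_{S},\cK_{S})$ whose stalks vanish outside the locally closed substack $Z_{\cE}$ is the $!*$-extension of an equivariant perverse sheaf on $Z_{\cE}$, and this passage is fully faithful; hence $\Perv_{c}(\ov k;\bK_{S},\cK_{S})_{\cE}$ is equivalent to the category of $(L_{S},\cK_{S})$-equivariant perverse sheaves on $[L_{S}/A]$.

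Next I would pull back further to the $A$-cover $L_{S}\to[L_{S}/A]$. Since $L_{S}$ is connected and acts simply transitively on itself, $(L_{S},\cK_{S})$-equivariant perverse sheaves on $L_{S}$ are exactly of the form $V\otimes\cK_{S}[\dim L_{S}]$ (up to Tate twist), where $V$ is a finite-dimensional $\Qlbar$-vector space; here an $L_{S}$-equivariance of $\cK_{S}$ against itself is built in from the character sheaf structure. Thus an object of $\Perv_{c}(\ov k;\bK_{S},\cK_{S})_{\cE}$ amounts to a pair $(V,\alpha)$ where $\alpha$ is an $A$-equivariance on $V\otimes\cK_{S}$ for the left-translation action of $A$ on $L_{S}$ through $\ev_{S,\cE}$. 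Using the multiplicativity isomorphism $\mu:m^{*}\cK_{S}\isom\cK_{S}\boxtimes\cK_{S}$, pulled back along $\ev_{S,\cE}\times\id$, the datum $\alpha$ reduces to an isomorphism $\cL\otimes V\cong V$ over $A$, satisfying the evident cocycle condition over $A\times A$, where $\cL:=\ev^{*}_{S,\cE}\cK_{S}$.

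To finish I would descend the coefficient system $\cL$ from $A$ to $\Gamma_{\cE}=\pi_{0}(A)$. By relevance, $\cL|_{A^{\c}}$ is trivial as a character sheaf, so there is a unique trivialization (up to scalar) of $\cL$ along each connected component; the resulting rank one local system on the discrete group $\Gamma_{\cE}$ is a character sheaf whose class in $\cohog{2}{\Gamma_{\cE},\Qlbar^{\times}}$ is exactly the cocycle $\xi$ defined in the statement. An $A$-equivariance $\alpha$ for the trivial sheaf $V$ twisted by $\cL$ then becomes, on the level of geometric fibers over components of $A$, a collection of automorphisms $T_{\gamma}\in\GL(V)$, one for each $\gamma\in\Gamma_{\cE}$, with composition law $T_{\gamma\delta}=\xi_{\gamma,\delta}T_{\gamma}T_{\delta}$ coming from the cocycle condition for $\alpha$ combined with $\mu$. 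This is by definition a $\xi$-twisted representation of $\Gamma_{\cE}$, and the construction is manifestly functorial, giving the required equivalence of categories.

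The main technical hurdle will be in the last paragraph: identifying the cocycle obtained from descending $\cL|_{A^{\c}}$ with the one governing the twisting of $\alpha$ via $\mu$, so that these two \emph{a priori} different $2$-cocycles agree in $\cohog{2}{\Gamma_{\cE},\Qlbar^{\times}}$. Once conventions for sides of the actions and for the associator of $\mu$ are pinned down, the rest is standard equivariant-descent bookkeeping.
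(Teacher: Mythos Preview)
Your argument is correct and follows the same opening move as the paper: identify the preimage of $\cE$ in $\Bun_{G}(\bK^{+}_{S})$ with $[L_{S}/A_{\cE}]$ and study $(L_{S},\cK_{S})$-equivariant perverse sheaves there. Where you diverge is in how you unwind the twisting. You work intrinsically with $\cL=\ev^{*}_{S,\cE}\cK_{S}$ on $A_{\cE}$, use the multiplicativity $\mu$ of $\cK_{S}$ to turn $A_{\cE}$-equivariance into the data $\cL\otimes V\cong V$ with a cocycle condition, and then trivialize on connected components to extract the $2$-cocycle $\xi$. The paper instead chooses a finite isogeny $\nu:\wt L_{S}\to L_{S}$ with kernel $C$ and a character $\chi_{C}$ so that $\cK_{S}\cong(\nu_{*}\Qlbar)_{\chi_{C}}$; pulling back gives a central extension $1\to C\to\wt\G_{\cE}\to\G_{\cE}\to1$, and the category becomes $\Rep^{\chi_{C}}(\wt\G_{\cE},\Qlbar)$, which is then matched with $\Rep_{\xi}(\G_{\cE},\Qlbar)$ via a set-theoretic splitting.

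Your route is slightly more conceptual and avoids the auxiliary cover, at the cost of the bookkeeping you flag in your last paragraph (matching the two cocycles coming from $\mu$ and from descent). The paper's route trades that bookkeeping for the choice of $\nu$, and has the practical advantage that the objects $\wt L_{S}$, $\wt A_{\cE}$, $\wt\G_{\cE}$ it introduces are reused verbatim in the later computation of eigen local systems (the formula for ${}_{\z}E^{\l}_{\y}$), so the investment pays off downstream.
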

\begin{proof}[Sketch of proof] We base change the spaces to $\ov k$ without changing notation. We can find a finite isogeny $\nu: \wt L_{S}\to L_{S}$ with discrete kernel $C$, and a character $\chi_{C}: C\to \Qlbar^{\times}$  such that the local system  $\cK_{S}$ on $L_{S}$ is of the form
\begin{equation*}
\cK_{S}\cong(\nu_{*}\Qlbar)_{\chi_{C}}\in \CS(L_{S}).
\end{equation*}
Let $\wt A_{\cE}$ be the pullback of the cover $\nu$ along $\ev_{S,\cE}: A_{\cE}\to L_{S}$. Let $\wt\G_{\cE}=\pi_{0}(\wt A_{\cE})$. Then $\G_{\cE}$ fits into an exact sequence
\begin{equation*}
1\to C\to \wt\G_{\cE}\to \G_{\cE}\to 1.
\end{equation*}
The pushout of the sequence along $\chi_{C}: C\to\Qlbar^{\times}$ gives a $2$-cocycle $\xi\in Z^{2}(\G_{\cE},\Qlbar^{\times})$ (upon choosing a set-theoretic splitting $s: \G_{\cE}\to \wt\G_{\cE}$, so well-defined up to coboundaries).  Let $\Rep^{\chi_{C}}(\wt\G_{\cE}, \Qlbar)$ be the category of finite-dimensional $\Qlbar$-representations of $\wt\G_{\cE}$ whose restriction to $C$ is $\chi_{C}$.. We have an equivalence
\begin{equation*}
\Rep^{\chi_{C}}(\wt\G_{\cE}, \Qlbar)\isom \Rep_{\xi}(\G_{\cE},\Qlbar)
\end{equation*}
by restriction along $s$.

The preimage of $\cE$ in $\Bun_{G}(\bK_{S}^{+})$ is isomorphic to $L_{S}/A_{\cE}\cong \wt L_{S}/\wt A_{\cE}$ where $A_{\cE}$ acts on $L_{S}$ via $\ev_{S,\cE}$ and right translation. For $V\in \Rep_{\xi}(\G_{\cE},\Qlbar)$ viewed as a representation of $\wt \G_{\cE}$ with restriction $\chi_{C}$ to $C$,  we get a $\wt L_{S}$-equivariant local system $\cF_{V}$ on $\wt L_{S}/\wt A_{\cE}\cong L_{S}/A_{\cE}$ (since $\wt\G_{\cE}=\pi_{0}(\wt A_{\cE})$). Let $i: L_{S}/A_{\cE}\incl \Bun_{G}(\bK_{S},\cK_{S})$ be the inclusion, then up to a shift $i_{!}\cF_{V}$ is the object in $\Perv_{c}(\ov k; \bK_{S},\cK_{S})_{\cE}$ corresponding to $V$.
\end{proof}

\sss{General case} Let 
$$\Bun_{G}(\bK_{S})=\coprod_{\a\in \Om}\Bun_{G}(\bK_{S})_{\a}$$ be the decomposition into connected components. Here $\Om=\xcoch(T)/\ZZ\Phi^{\vee}$ is the algebraic $\pi_{1}$ of $G$. Let $D_{c}(\ov k; \bK_{S}, \cK_{S})_{\a}\subset D_{c}(\ov k;\bK_{S}, \cK_{S})$ be the full subcategory of sheaves supported on $\Bun_{G}(\bK_{S})_{\a}$. Similarly define the abelian category of perverse sheaves $\Perv_{c}(\ov k; \bK_{S}, \cK_{S})_{\a}$.

Let $\cE_{\a}$ be the unique relevant $\kbar$-point in $\Bun_{G}(\bK_{S})_{\a}$. Let $A_{\a}=\Aut(\cE_{\a})$ (an algebraic  group over $\ov k$), and $
\G_{\a}=\pi_{0}(A_{\a})$ as a discrete group. 
%We shall give a description of $\rho_{\eta, V}$ for $V\in\Rep(\dG,\Qlbar)$.

Define ${}_{\b}\frG_{\a}\to U$ whose fiber over $x\in U$ parametrize isomorphisms of $G$-torsors $\calE_{\a}|_{X-\{x\}}\isom \cE_{\b}|_{X-\{x\}}$ preserving the $\bK_{S}$-level structures. This is a right torsor under the group ind-scheme ${}_{\a}\frG_{\a}$ and a left torsor under ${}_{\b}\frG_{\b}$.  In particular, $A_{\a}$ acts on ${}_{\b}\frG_{\a}$ from the right and $A_{\b}$ acts from the left. Upon choosing trivializations of $\cE_{\a}$ and $\cE_{\b}$ near $S$, we have the evaluation map
\begin{equation*}
{}_{\a}\ev_{S, \b}: {}_{\a}\frG_{\b}\to \prod_{x\in S}\bK_{x}\ot \ov k\to L_{S}\ot\ov k.
\end{equation*}

%combined with \eqref{ev Hk}, 

Choose $(\wt L_{S}, \chi_{C})$ as in the proof of Lemma \ref{l:relpt twrep}. Pulling back the covering $\nu: \wt L_{S}\to L_{S}$ along ${}_{\a}\ev_{S,\b}$ we get $C$-torsors ${}_{\a}\tfrG_{\b}\to {}_{\a}\frG_{\b}$ and central extensions $1\to C\to \wt A_{\a}\to A_{\a}\to 1$ and $1\to C\to  \wt \G_{\a}\to \G_{\a}\to 1$.

Then by Lemma \ref{l:relpt twrep} we have an equivalence of categories
\begin{equation}\label{perv a}
\Perv_{c}(\ov k; \bK_{S}, \cK_{S})_{\a}\cong \Rep_{\xi_{\a}}(\G_{\a},\Qlbar)\cong \Rep^{\chi_{C}}(\wt\G_{\a},\Qlbar).
\end{equation}
where the last category consists of finite-dimensional $\Qlbar$-representations of $\wt\G_{\a}$ whose restriction to $C$ is $\chi_{C}$.

We have an evaluation map
\begin{equation*}
{}_{\a}\ev_{U,\b}: {}_{\a}\frG_{\b}\to \left[\dfrac{L^{+}G\bs LG/L^{+}G}{\Aut^{+}}\right]
\end{equation*}
recording the modification at  $x\in U$. Let ${}_{\a}\frG^{\le \l}_{\b}$ be the preimage of $L^{+}G\bs \Gr_{\le \l}$ under the above map, then ${}_{\a}\frG^{\le \l}_{\b}\ne\vn$ only when $\l$ has image $\b-\a\in \Om$. The pullback ${}_{\a}\ev_{U,\b}^{*}\IC_{\l}$ is supported on ${}_{\a}\frG^{\le \l}_{\b}$.  Let ${}_{\a}\wt\ev_{U,\b}$ be the precomposition of ${}_{\a}\ev_{U,\b}$ with ${}_{\a}\tfrG_{\b}\to {}_{\a}\frG_{\b}$. We also have ${}_{\a}\wt\ev_{U,\b}^{*}\IC_{\l}$ supported on ${}_{\a}\tfrG^{\le \l}_{\b}$.

Let ${}_{\a}\wt\pi_{\b}^{\l}: {}_{\a}\tfrG_{\b}\to U$ be the projection. Let $\Sig_{\a}$ be the set of irreducible object in $\Rep^{\chi_{C}}(\wt\G_{\a}, \Qlbar)$. For $\a\in\Om$, let $\calF_{\eta}\in \Perv_{c}(\ov k; \bK_{S}, \cK_{S})_{\a}$ corresponding to an irreducible object $\y\in \Sig_{\a}$ under \eqref{perv a}. 

\begin{prop}\label{p:loc} Let $(\bK_{S},\cK_{S})$ be geometrically rigid.  Then for a dominant coweight $\l$ with image $\b-\a$ in $\Om$, we have
\begin{equation*}
\TT_{V_{\l}}(\cF_{\y}\bt\Qlbar)\cong \op_{\z\in \Sig_{\b}}\cF_{\z}\bt {}_{\z}E^{\l}_{\y}
\end{equation*}
where ${}_{\z}E_{\y}^{\l}$ is the local system on $U$ given by
\begin{equation}\label{compute rho}
{}_{\z}E^{\l}_{\y}\cong \Hom_{\wt\G_{\a}\times\wt\G_{\b}}(\y\boxtimes \z^{\vee}, {}_{\a}\wt{\pi}^{\l}_{\b!}{}_{\a}\wt{\ev}_{\b}^{*}\IC_{\l})[-1].
\end{equation}
\end{prop}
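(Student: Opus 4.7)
The plan is to restrict the Hecke correspondence to the preimages of the relevant points in each component, pass to the auxiliary covers, and then invoke the twisted-representation description of Lemma~\ref{l:relpt twrep} to read off the irreducible summands. First, by geometric rigidity combined with Lemma~\ref{l:irr sh} applied fiberwise over $U$, the complex $\TT_{V_\lambda}(\cF_\eta\boxtimes\Qlbar)$ on $\Bun_G(\bK_S^+)\times U$ has stalks and costalks concentrated over the preimages of the relevant points $\cE_\beta$; the hypothesis that $\lambda$ has image $\beta-\alpha$ in $\Om$ ensures that precisely this $\beta$-component is nonzero. Let $\iota_\gamma:L_S/A_\gamma\incl\Bun_G(\bK_S^+)$ be the embedding of the preimage of $\cE_\gamma$. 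Base-changing the Hecke correspondence $\Hk_G(\bK_S^+)$ along $\iota_\alpha\times\iota_\beta$, I identify the resulting correspondence (up to the $L_S\times L_S$-torsor structure coming from the $\bK_S^+/\bK_S$-level data) with the modification stack ${}_\alpha\frG_\beta$ carrying a natural $A_\alpha\times A_\beta$-action, and its evaluation map along $U$ with ${}_\alpha\ev_{U,\beta}$. The push-pull $\orr h_!(\oll h^*(\cdot)\otimes\ev^*\IC_\lambda)$ is thereby computed, after this restriction, by push-pulling against ${}_\alpha\ev_{U,\beta}^*\IC_\lambda$ along the two projections from ${}_\alpha\frG^{\le\lambda}_\beta$.

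Next, I transfer everything to the covers. With $\nu:\wt L_S\to L_S$ and $\chi_C:C\to\Qlbar^\times$ chosen as in the proof of Lemma~\ref{l:relpt twrep} so that $\cK_S\cong(\nu_*\Qlbar)_{\chi_C}$, every $(L_S,\cK_S)$-equivariant datum over $\cE_\gamma$ becomes a genuine $\wt L_S$-equivariant datum on which $C$ acts through $\chi_C$, and $A_\gamma$ is replaced by $\wt A_\gamma$ with $\pi_0(\wt A_\gamma)=\wt\G_\gamma$. Under this conversion the modification space lifts to ${}_\alpha\wt\frG^{\le\lambda}_\beta$ with a $\wt A_\alpha\times\wt A_\beta$-action, and the complex
\begin{equation*}
\wt\cG:={}_\alpha\wt\pi^\lambda_{\beta!}\,{}_\alpha\wt\ev_\beta^*\,\IC_\lambda
\end{equation*}
on $U$ carries a natural action of $\wt\G_\alpha\times\wt\G_\beta$ whose restriction to the central $C\times C$ is governed by $\chi_C\boxtimes\chi_C^{-1}$. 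Using the equivalence $\Perv_c(\ov k;\bK_S,\cK_S)_\beta\cong\Rep^{\chi_C}(\wt\G_\beta,\Qlbar)$ of Lemma~\ref{l:relpt twrep}, extracting the $\cF_\zeta$-summand of the pushforward amounts to taking the $\zeta$-isotypic component of the $\wt\G_\beta$-action, while pairing with the input representation $\eta$ on the $\wt\G_\alpha$-side gives the contragredient dependence on $\eta$. Putting this together yields the asserted formula
\begin{equation*}
{}_\zeta E^\lambda_\eta\cong\Hom_{\wt\G_\alpha\times\wt\G_\beta}(\eta\boxtimes\zeta^\vee,\wt\cG)[-1],
\end{equation*}
where $\zeta^\vee$ arises from the contragredient pairing used to split off the $\cF_\zeta$-coefficient, and the shift $[-1]$ absorbs the one-dimensional $U$-direction that is not included in the normalization of the Satake sheaf $\IC_\lambda$.

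The main obstacle I expect is the careful bookkeeping in the first step: identifying the base change of $\Hk_G(\bK_S^+)$ along $\iota_\alpha\times\iota_\beta$ with a twist of ${}_\alpha\frG_\beta$ modulo $A_\alpha\times A_\beta$, and then tracking how the $\cK_S$-equivariant structure lifts through $\wt L_S\to L_S$ so that the resulting $\wt\G_\alpha\times\wt\G_\beta$-action on $\wt\cG$ has the expected central character (without which the Hom could vanish identically). A secondary point is verifying that $\wt\cG$ sits in a single perverse degree, so that $\TT_{V_\lambda}(\cF_\eta\boxtimes\Qlbar)$ decomposes as a genuine direct sum $\bigoplus_\zeta\cF_\zeta\boxtimes{}_\zeta E^\lambda_\eta$ of perverse sheaves rather than as a nontrivial complex; this uses semisimplicity of $\Rep^{\chi_C}(\wt\G_\beta,\Qlbar)$ together with a dimension count on the fibers of ${}_\alpha\wt\frG^{\le\lambda}_\beta\to U$ and purity of the Satake sheaf.
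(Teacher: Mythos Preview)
The paper does not actually supply a proof of this proposition: immediately after the statement it passes to the special case where all $\Aut(\cE_\alpha)$ are trivial, and the introduction explicitly says that most proofs are omitted with a reference to \cite{Y-CDM}. So there is no in-paper argument to compare your proposal against.

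That said, your outline follows exactly the scaffolding the paper sets up in the paragraphs preceding the proposition: the description of ${}_{\alpha}\frG_{\beta}$, the cover ${}_{\alpha}\tfrG_{\beta}$ obtained by pulling back $\nu:\wt L_S\to L_S$, and the equivalence $\Perv_c(\ov k;\bK_S,\cK_S)_\alpha\cong\Rep^{\chi_C}(\wt\G_\alpha,\Qlbar)$ from Lemma~\ref{l:relpt twrep}. Restricting the Hecke correspondence to the relevant loci, lifting through the $C$-cover to replace $(L_S,\cK_S)$-equivariance by honest $\wt L_S$-equivariance, and then reading off isotypic components for $\wt\G_\alpha\times\wt\G_\beta$ is precisely the intended computation. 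Your identification of the two delicate points---the base-change of $\Hk_G(\bK_S^+)$ along $\iota_\alpha\times\iota_\beta$ with ${}_{\alpha}\frG_{\beta}$ quotiented by $A_\alpha\times A_\beta$, and the perversity (hence local-system) property of the resulting ${}_\zeta E^\lambda_\eta$---is accurate; the latter is what ultimately rests on the ind-properness and semi-smallness encoded in Theorem~\ref{th:eigencat} (and its reference \cite[Corollary A.4.2]{JY}), which your sketch invokes only implicitly.
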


If $\Aut(\cE_{\a})$ is trivial for all $\a$, there is a unique simple perverse sheaf $\cF_{\a}\in D_{c}(\ov k;\bK_{S},\cK_{S})_{\a}$. The sum of $\cF_{\a}$ is a Hecke eigensheaf whose eigen $\dG$-local system $E$ is described as follows.  For a dominant coweight $\l$ with image $\b-\a$ in $\Om$, we have
\begin{equation*}
E_{V_{\l}}\cong {}_{\a}\pi^{\l}_{\b!}({}_{\a}\ev_{S,\b}^{*}\cK_{S}\ot {}_{\a}\ev^{*}_{\b}\IC_{\l})[-1].
\end{equation*}
When $\l$ is minuscule we have
\begin{equation}\label{Emin ab}
E_{V_{\l}}\cong \bR^{\j{\l,2\r}}{}_{\a}\pi^{\l}_{\b!}{\a}\ev_{S,\b}^{*}\cK_{S}.
\end{equation}

%\subsubsection{Rationality issue} Suppose that the unique relevant point $\calE$ is defined over $k$ and that the representation  $W_{\eta}$ can be extended to $\tilA(\kbar)\rtimes\Gk$, then the Hecke eigensheaf $\calF_{\eta}$ is also defined over $k$ (i.e., it descends to an object in $D_{G}(\bK_{S}, \chi_{S})$), and the Hecke eigen local system $\rho_{\eta}$ is a $\dG$-local system over $U$. The identity \eqref{compute rho} now holds over $U$.

%
%
%The $\bK_{S}$-analog of the diagram \eqref{diag Hk} restricted to the open relevant point becomes the diagram
%\begin{equation*}
%\xymatrix{& A_{\b}\backslash {}_{\b}\frG_{\a}/A_{\a}\ar[dl]_{\oll{h}}\ar[dr]^{\orr{h}}\ar[rr]^{\pi}  && U\\
%\pt/ A_{\a} & & \pt/ A_{\b}}
%\end{equation*}

\subsection{Kloosterman local systems}\label{ss:Kl}
We apply the method above to the Kloosterman \aud\ in Example \ref{ex:Kl} in the case $G=\PGL_{n}$. We consider only the case $\l=(1,0,\cdots, 0)$, so $V_{\l}$ is the standard representation of $\dG=\SL_{n}$. So we are looking for an $\dG=\SL_{n}$-local system on $U=\PP^{1}-\{0,\infty\}\cong \Gm$. The answer turns out to be a classical object.

\sss{Calculation of $\frG^{\le \l}$}
The moduli stack $\Bun_{\GL_{n}}(\bK_{S})$ classifies equivalence classes of $(\cE, F^{*}, F_{*}, )$ where 
\begin{enumerate}
\item $\cE$ is a rank $n$ vector bundle over $\PP^{1}$;
\item $F^{*}: (0=F^{n}\subset F^{n-1}\subset \cdots\subset F^{0}=\cE|_{0})$  is a  complete flag of the fiber of $\cE$ at $0$;
\item $F_{*}: (0=F_{0}\subset F_{1}\subset \cdots\subset F_{n}=\cE|_{\infty})$  is a  complete flag of the fiber of $\cE$ at $\infty$;
\item For $1\le i\le n$, $v_{i}\in F_{i}/F_{i-1}$ is a nonzero vector.
\end{enumerate}
Let $\Pic_{\infty}$ be the moduli space of line bundles on $\PP^{1}$ together with a trivialization at $\infty$. We have a canonical isomorphism $\Pic_{\infty}\cong \ZZ$ by taking degrees. Then $\Pic_{\infty}$ acts on $\Bun_{\GL_{n}}(\bK_{S})$ by tensoring, and $\Bun_{G}(\bK_{S})=\Bun_{\GL_{n}}(\bK_{S})/\Pic_{\infty}$. 

The components of $\Bun_{G}(\bK_{S})$ are parametrized by $\ZZ/n\ZZ$ by taking $\deg\cE$ mod $n$. Let $\cE_{i}$ be the unique relevant point of degree $i\in \ZZ_{n\ZZ}$. Then we have the following description:
\begin{enumerate}
\item $\cE_{0}=\cO e_{1}\op \cO e_{2}\op\cdots\op \cO e_{n}$,   $F^{i}=\Span\{e_{i+1}, \cdots, e_{n}\}$,  $F_{i}=\Span\{e_{1},\cdots, e_{i}\}$, $v_{i}=e_{i}\mod F_{i-1}$. 
\item $\cE_{1}=\cO e_{1}\op \cO e_{2}\op\cdots\op \cO(1) e_{n}$. Here $\cO(1)$ is equipped with a trivialization at $\infty$, so we identify it with $\cO(\{0\})$. Take $F^{i}=\Span\{e_{i}, \cdots, e_{n-1}\}$ if $i\ge 1$, $F_{i}=\Span\{e_{1},\cdots, e_{i}\}$,  and $v_{i}=e_{i}\mod F_{i-1}$. 
\end{enumerate}
A point in $\frG^{\le \l}$ is an injective map of coherent sheaves $f: \cE_{0}\to \cE_{1}$ preserving $F^{*}, F_{*}$ and $\{v_{i}\}$. We may write $f$ as a matrix $(a_{ij})$ where $a_{ij}\in k$ if $i<n$, and $a_{nj}\in \G(\PP^{1},\cO(1))=k\op k\t$. The fact that $f$ preserve $F^{*}, F_{*}$ and $\{v_{i}\}$ imply
\begin{enumerate}
\item $(a_{ij})|_{\t=0}$ is upper triangular with $1$ on the diagonal. This implies $(a_{ij})$ has the form
\begin{equation*}
\left(\begin{array}{cccc} 1 & * & * & * \\  0 &  \ddots & * & *  \\ 0 & 0 & 1 & * \\  a'_{n1}\t & \cdots & a'_{n,n-1}\t & 1+a'_{nn}\t \end{array}\right)
\end{equation*}

\item Let $a'_{ij}=a_{ij}$ if $i<n$ and let $a'_{nj}$ be as above. Then $(a'_{ij})$ sends the flag $e_{i}$ to $\Span\{e_{i-1},\cdots, e_{n-1}\}$ for $i=2,\cdots, n$.  This implies $(a_{ij})$ has the form 
\begin{equation*}
\left(\begin{array}{cccc} 1 & a_{1} & 0 & 0 \\  0 &  \ddots & * & 0  \\ 0 & 0 & 1 & a_{n-1} \\  a_{0}\t & \cdots & 0 & 1 \end{array}\right)
\end{equation*}
Moreover, $a_{0},\cdots, a_{n-1}$ are nonzero since they are the action of $f$ on the associated graded of the filtrations $F^{*}$ at $0$.
\end{enumerate}
We get an isomorphism $\frG^{\le \l}\cong \Gm^{n}$ with coordinates $(a_{0},\cdots,a _{n-1})$. The map $\pi: \frG^{\le \l}\to U$ sends $f$ to the support of the cokernel of $f$. Since $\det(f)=1+(-1)^{n-1}a_{0}\cdots a_{n-1}t^{-1}\in \G(\PP^{1}, \cO(1))$, we see that $\pi(f)=(-1)^{n}a_{0}a_{1}\cdots a_{n-1}$. 

For simplicity assume $\cK_{0}$ is trivial. By \eqref{Emin ab}, we get that ($\St=V_{\l}$ is the standard representation of $\dG=\SL_{n}$ )
\begin{equation}\label{KlSt}
E_{\St}\cong \bR^{n-1}\pi_{!}\phi^{*}\AS_{\psi}
\end{equation}
where we use $\phi$ to denote the composition $\frG^{\le \l}\cong \Gm^{n}\xr{\ev_{S}} \prod_{i=0}^{n-1} U_{\a_{i}}\xr{\ph}\Ga$; it is  linear in each coordinate $a_{i}$ with nonzero coefficient.

\subsubsection{The classical Kloosterman sheaf}
We first recall the definition of Kloosterman sums. Let $p$ be a prime number. Fix a nontrivial additive character $\psi:\FF_p\to\Qlbar^{\times}$. Let $n\geq2$ be an integer. Then the $n$-variable Kloosterman sum over $\FF_p$ is a function on $\FF^{\times}_{p}$ whose value at $a\in\FF_{p}^{\times}$ is
\begin{equation*}
\Kl_n(p;a)=\sum_{x_1,\cdots,x_n\in\FF^\times_p;x_1x_{2}\cdots x_{n}=a}\psi(x_1+\cdots+x_n).
\end{equation*}
These exponential sums arise naturally in the study of automorphic forms for $\GL_{n}$.

Deligne \cite{Deligne-ExpSum} gave a geometric interpretation of the Kloosterman sum. He considered the following diagram of schemes over $\FF_{p}$
\begin{equation*}
\xymatrix{& \Gm^n\ar[dl]_{\pi}\ar[dr]^{\ph}\\ \Gm & & \AA^1}
\end{equation*}
Here $\pi$ is the morphism of taking the product and $\ph$ is the morphism of taking the sum. He defines the {\em Kloosterman sheaf} to be 
\begin{equation*}
\Kl_n:=\bR^{n-1}\pi_!\ph^*\AS_\psi,
\end{equation*}
over $\Gm=\PP^{1}_{\FF_{p}}-\{0,\infty\}$. Up to a change of coordinates, this is essentially \eqref{KlSt}. The relationship between the local system $\Kl_{n}$ and the Kloosterman sum $\Kl_{n}(p;a)$ is explained by the following identity
\begin{equation*}
\Kl_n(p;a)=(-1)^{n-1}\Tr(\Frob_a,(\Kl_{n})_a).
\end{equation*}
Here $\Frob_{a}$ is the geometric Frobenius operator acting on the geometric stalk $(\Kl_{n})_{a}$ of $\Kl_{n}$ at $a\in\Gm(\FF_{p})=\FF_{p}^{\times}$.

\sss{Properties of Kloosterman local systems} The following properties of $\Kl_{n}$ were proved by Deligne. 
\begin{enumerate}
\item[(0)] $\Kl_n$ is a local system of rank $n$.
\item[(1)] $\Kl_n$ is tamely ramified at $0$, and the monodromy is unipotent with a single Jordan block.
\item[(2)] $\Kl_n$ is totally wild at $\infty$ (i.e., the wild inertia at $\infty$ has no nonzero fixed vector on the stalk of $\Kl_n$), and the Swan conductor $\Sw_\infty(\Kl_n)=1$. 
%\item[(3)] The Zariski closure of the geometric monodromy of $\Kl_{n}$ is
%\begin{equation}
%\begin{cases}  \Sp_{n}, &  \mbox{$n$ is even}\\
%\SL_{n}, & \mbox{$n$ is odd, $p\ne 2$}\\
%\SO_{n}, & \mbox{$n>7$ is odd, $p=2$}\\
%G_{2}, & \mbox{$n=7$, $p=2$.} 
%\end{cases}
%\end{equation}
\end{enumerate}

%\begin{theorem}[Gross \cite{Gross-Prescribe}, simple proof by Heinloth-Ng\^o-Yun \cite{HNY}]\label{th:Kl auto} 
%\begin{enumerate}
%\item The Kloosterman automorphic datum is strongly rigid.
%\item When $\chi=1$, the local component at $0$ of any $((\bI^{\opp}_{0},1),(\bI^{+}_{\infty}, \psi\circ\phi))$-typical automorphic representation is isomorphic to the Steinberg representation of $G(F_{0})$.
%\end{enumerate}
%\end{theorem}
%Part (1) above is a special case of Theorem \ref{th:eigen}, and the automorphism group of the unique relevant point is trivial. By Theorem \ref{th:eigen}, we have a Hecke eigen $\dG$-local system $\Kl_{\dG}(\chi,\phi)$ over $\PP^{1}-\{0,\infty\}$ associated with the unique irreducible perverse sheaf in $D_{G}(\bK_{S}, \chi_{S})$.

Applying a special case of Theorem \ref{th:eigencat} to the Kloosterman \auda, we  get $\dG$-local systems $\Kl_{\dG}(\chi,\ph)$.  In \cite{HNY}, we show that $\Kl_{\dG}(\chi,\ph)$ enjoy analogous  properties as $\Kl_{n}$. These properties were predicted by Gross \cite{Gross-Prescribe}, Frenkel--Gross \cite{FG}. For example, when $\chi=1$ we prove:
\begin{enumerate}
\item $\Kl_{\dG}(1,\ph)$ is tame at $0$, and a generator of the tame inertia maps to a regular unipotent element in $\dG$.
\item The local monodromy of $\Kl_{\dG}(1,\ph)$ at $\infty$ is a {\em simple wild parameter} in the sense of Gross and Reeder \cite[\S5]{GR}.
\end{enumerate}

\section{Rigidity for local systems; Applications}

This section is likely not to be covered in the lectures. We compare the notion of rigidity for automorphic data and for local systems. We also mention some applications of the rigid \auda\ and some open problems.

%
%\begin{defn}\label{define global mono} Let $\rho:\pi_{1}(U,u)\to H(\Qlbar)$ be an $H$-local system.
%The {\em global geometric monodromy group} $H^{\geom}_{\rho}$ of the $H$-local system $\rho$ is the Zariski closure of $\rho(\pi^{\geom}_{1}(U,u))$ in $H$. 
%%When we think of $\rho$ as a tensor functor $\calF:\Rep(H,\Qlbar)\to \Loc(U)$, we also use the notation $H^{\geom}_{\calF}$ for the global geometric monodromy group.
%\end{defn}

\subsection{Rigid $\dG$-local systems}

We shall review the notion of rigidity for $\dG$-local systems, introduced by Katz \cite{Katz} for $\dG=\GL_{n}$. We assume the base field $k$ to be {\bf algebraically closed} in this subsection. Let $X$ be a complete smooth connected algebraic curve over $k$. Fix an open subset $U\subset X$ with finite complement $S$. 

\sss{Physical rigidity}
\begin{defn}[extending Katz {\cite[\S1.0.3]{Katz}}]\label{phy rig} Let $E$ be an $\dG$-local system on $U$. Then $E$ is called {\em physically rigid} if,  for any other $\dG$-local system $E'$,  $E'|_{\Spec F_{x}}\cong E|_{\Spec F_{x}}$ for all  $x\in S$ implies $E'\cong E$.
\end{defn}
Although the definition uses $U$ as an input, the notion of physical rigidity is in fact independent of the open subset $U$: for any nonempty open subset $V\subset U$, $\rho$ is rigid over $U$ if and only if $E|_{V}$ is rigid over $V$. Therefore, physical rigidity is a property of the Galois representation $\rho_{E}:\Gal(F^{\sep}/F)\to \dG(\Qlbar)$ obtained by restricting $E$ to a geometric generic point $\eta$ of the $X$.

\sss{Cohomological rigidity}\label{sss:coh rig}
Next we introduce cohomological rigidity. Let $\wh\frg$ be the Lie algebra of $\dG$, and let $\Ad(E)$ be local system $E_{\wh\frg}$, viewing $\wh\frg$ as the adjoint representation of $\dG$. Let $j:U\incl X$ be the open embedding and $j_{!*}\Ad(E)$ be the non-derived direct image of $\Ad(E)$ along $j$. Concretely, the stalk of $j_{!*}\Ad(E)$ at $x\in S$ is the $\calI_{x}$-invariants on $\wh\frg$. 

\begin{defn}[extending Katz {\cite[\S5.0.1]{Katz}}]\label{coho rig} A $\dG$-local system $E$ on $U$ is  called {\em cohomogically rigid}, if
\begin{equation*}
\t(E):=\cohog{1}{X,j_{!*}\Ad(E)}=0.
\end{equation*}
\end{defn}

The vector space $\t(E)$ does not change if we shrink $U$ to a smaller open subset. Therefore cohomological rigidity is also a property of the Galois representation $\rho_{E}:\Gal(F^{\sep}/F)\to \dG(\Qlbar)$.

\begin{remark}When we work over the base field $\CC$ and view $U$ as a topological surface, one can define a moduli stack $\calM$ of $\dG$-local systems over $U$ with prescribed local monodromy around the punctures $S$. Then $\t(E)$ is the Zariski tangent space of $\calM$ at $E$. The condition $\t(E)=0$ in this topological setting says that $E$ does not admit infinitesimal deformations with prescribed local monodromy around  $S$. This interpretation is the motivation for Definition \ref{coho rig}. 
\end{remark}

\begin{remark} An alternative approach to define the notion of rigidity for a local system $E$ over $U$ over a finite field $k$ is by requiring that the adjoint $L$-function of $E$ to be trivial (constant function $1$). This is the approach taken by Gross in \cite{Gross-Trivial L}. When $\cohog{0}{U_{\kbar},\Ad(E)}=0$, triviality of the adjoint $L$-function of $E$ is equivalent to cohomological rigidity of $E$.  
\end{remark}

Using the Grothendieck-Ogg-Shafarevich formula,  it is easy to give the following numerical criterion for cohomological rigidity:  $E$ is cohomologically rigid if and only if
\begin{equation}\label{num rigid}
\frac{1}{2}\sum_{x\in S}a_{x}(\Ad(E))=(1-g_{X})\dim\dG-\dim \cohog{0}{U,\Ad(E)}.
\end{equation}
Here $a_{x}(\Ad(E))$ is the Artin conductor of $\Ad(E)$ at $x$, see \S\ref{sss:loc mon}, and $g_{X}$ is the genus of $X$.

From \eqref{num rigid} we see that cohomologically rigid $\dG$-local systems exist only when $g_{X}\leq1$. When $g_{X}=1$, rigid examples are very limited (see \cite[\S1.4]{Katz}). 

Most examples of rigid local systems are over open subsets of $X=\PP^{1}$. In this case, if we further assume $\cohog{0}{U,\Ad(E)}=0$, then 
\begin{equation*}
\frac{1}{2}\sum_{x\in S}a_{x}(\Ad(E))=\dim\dG.
\end{equation*}
Compare with \eqref{dim condition}, it is natural to expect that if $E$ comes as the Hecke eigen local system of a weakly rigid \gaud\ $(\bK_{x}, \cK_{x})$, then
\begin{equation*}
\frac{1}{2}a_{x}(\Ad(E))\stackrel{?}{=}[\frg(\cO_{x}): \frk_{x}], \quad \forall x\in S.
\end{equation*}

When $\dG=\SL_{n}$ and the local system is irreducible, the two notions of rigidity are equivalent.
\begin{theorem} Let $E$ be an irreducible rank $n$ $\Qlbar$-local system on $U=X-S$ with trivial determinant, viewed as an $\SL_{n}$-local system.
\begin{enumerate}
\item (Katz \cite[Theorem 5.0.2]{Katz}) If $E$ is cohomological rigid, then it is physical rigid.
\item (L.Fu \cite[Theorem 0.1]{Fu}) If $E$ is  physically rigid, then it is cohomological rigid.
\end{enumerate}
\end{theorem}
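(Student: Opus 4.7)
I would use the Euler characteristic of the sheaf-Hom. Given a second irreducible $\SL_{n}$-local system $E'$ on $U$ with $E|_{\Spec F_{x}}\cong E'|_{\Spec F_{x}}$ for all $x\in S$, set $\cF:=\uHom(E,E')$. By the Grothendieck--Ogg--Shafarevich formula,
\begin{equation*}
\chi(X,j_{!*}\cF)=(2-2g_{X})\,n^{2}-\sum_{x\in S}a_{x}(\cF),
\end{equation*}
and since $\cF|_{\Spec F_{x}}\cong \End(E)|_{\Spec F_{x}}$ as $\Gal(F^{\sep}_{x}/F_{x})$-representations, the Artin conductors match term by term, so $\chi(X,j_{!*}\cF)=\chi(X,j_{!*}\End(E))$. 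Decomposing $\End(E)=\one\op \Ad(E)$: irreducibility of $E$ gives $\cohog{0}{X,j_{!*}\Ad(E)}=0$, Poincar\'e duality combined with the self-duality of $\Ad(E)$ then gives $\cohog{2}{X,j_{!*}\Ad(E)}=0$, and cohomological rigidity gives $\cohog{1}{X,j_{!*}\Ad(E)}=\t(E)=0$. Hence $\chi(X,j_{!*}\End(E))=\chi(X,\Qlbar)=2-2g_{X}$. On the other hand, if $E\not\cong E'$, the same Schur/duality argument applied to $\cF$ forces $\cohog{0}{X,j_{!*}\cF}=\cohog{2}{X,j_{!*}\cF}=0$, hence $\chi(X,j_{!*}\cF)\le 0$. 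In the main case $g_{X}=0$ this gives $2\le 0$, a contradiction, so $E\cong E'$.

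\textbf{Plan for direction (2) (Fu).} I would argue by contradiction via deformation theory. Suppose $\t(E)>0$; I aim to produce a local system $E'\not\cong E$ with $E'|_{\Spec F_{x}}\cong E|_{\Spec F_{x}}$ for every $x\in S$. Consider the moduli stack $\cM$ of irreducible $\SL_{n}$-local systems $E'$ on $U$ whose restriction to each $\Spec F_{x}$ is isomorphic to $E|_{\Spec F_{x}}$ (rigidified up to the action of the local centralizers). Standard deformation theory identifies the tangent space of $\cM$ at $[E]$ with $\cohog{1}{X,j_{!*}\Ad(E)}=\t(E)$ and the obstruction space with $\cohog{2}{X,j_{!*}\Ad(E)}$, which vanishes by the Schur/Poincar\'e argument just used. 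Hence $\cM$ is smooth of positive dimension at $[E]$, so working over $\ov{k}$ it has many non-isomorphic geometric points in any neighborhood of $[E]$; in particular some $E'\not\cong E$ with the same local restrictions exists, contradicting physical rigidity.

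\textbf{Main obstacle.} The delicate step is direction (2): the above deformation-theoretic argument must be made rigorous in the $\ell$-adic setting. One needs a genuinely algebraic, finite-type moduli stack of ``$\ell$-adic local systems on $U$ with pointwise-prescribed local Galois representation at each $x\in S$'', and must show that its smoothness and positive dimension at $[E]$ (guaranteed by the $\cohog{1}/\cohog{2}$ computation) yield honest $\Qlbar$-local systems over $\ov{k}$ with exactly matching local restrictions---not merely matching up to local conjugation. This algebraization and the careful handling of the rigidification at each $x\in S$ is the essential content of Fu's argument; it has no counterpart on the Katz side, which is a pure Euler-characteristic computation.
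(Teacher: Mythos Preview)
The paper does not prove this theorem; it merely states it with attributions to Katz \cite[Theorem 5.0.2]{Katz} and Fu \cite[Theorem 0.1]{Fu}, so there is no in-paper argument to compare against.

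Your plan for (1) is exactly Katz's proof: compare $\chi(X,j_{!*}\uHom(E,E'))$ with $\chi(X,j_{!*}\End(E))$ via Grothendieck--Ogg--Shafarevich, using that the local Artin conductors agree, and force a contradiction from $2-2g_{X}>0$. Two small remarks. First, your restriction to $g_{X}=0$ is harmless: the numerical criterion \eqref{num rigid} together with irreducibility forces $g_{X}\le 1$, and the $g_{X}=1$ case is degenerate (all $a_{x}(\Ad(E))=0$, so $E$ extends to the elliptic curve and one argues directly). Second, the duality step needs the statement that $j_{!*}$ commutes with Verdier duality, which you are implicitly using when you claim $\cohog{2}{X,j_{!*}\cF}=0$.

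Your plan for (2) is the right heuristic, and you have correctly located the real difficulty. What you have not supplied is the mechanism for turning a nonzero $\t(E)$ into an honest $\ell$-adic local system $E'\not\cong E$: there is no ready-made finite-type moduli stack of $\Qlbar$-local systems on $U$ with prescribed local restrictions whose tangent space is $\cohog{1}{X,j_{!*}\Ad(E)}$. Fu's actual argument works formally: he develops a deformation theory for $\ell$-adic sheaves over Artinian local $\Zl$-algebras, shows the relevant deformation functor is pro-representable and formally smooth (obstructions live in $\cohog{2}{}$, which vanishes), and then---this is the substantive step you are missing---proves that the resulting formal family can be algebraized to produce genuinely non-isomorphic $\Qlbar$-local systems with the required local behavior. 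The passage from ``formal smoothness of positive dimension'' to ``existence of a distinct $\Qlbar$-point'' is where the work lies, and your sketch does not indicate how you would carry it out.
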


\subsection{Applications of rigid \aud} We shall give three applications of the rigid objects in the Langlands correspondence.

\subsubsection{Local systems with exceptional monodromy groups} Katz \cite{Katz-DE} has constructed an example of a local system over $\PP^{1}_{\FF_{p}}-\{0,\infty\}$ whose geometric monodromy lies in the exceptional group $G_{2}$ and is Zariski dense there. This $G_{2}$-local system comes from a rank $7$ rigid local system which is an example of Katz's hypergeometric sheaves. 

The work \cite{HNY}, inspired by the work of Gross \cite{Gross-Prescribe} and Frenkel--Gross \cite{FG},  gives the first examples of local systems (coming from geometry) with Zariski dense monodromy in other exceptional groups $F_{4}, E_{7}$ and $E_{8}$ in a uniform way. The Zariski closure of the geometric monodromy of $\Kl_{\dG}(1,\ph)$ is a connected simple subgroup of $\dG$ of types given by the following table (assuming $p\ne 2,3$)
\begin{center}
\begin{tabular}{|l|l|}
\hline
$\dG$ & \textup{geometric monodromy} \\ \hline
$A_{2n}$ & $A_{2n}$ \\
$A_{2n-1}, C_n$ &  $C_n$ \\ 
$B_n, D_{n+1}$ $(n\geq4)$ & $B_n$   \\
$E_7$ & $E_7$  \\
$E_8$ & $E_8$  \\
$E_6, F_4$ & $F_4$  \\
$B_3,D_4, G_2$ & $G_2$  \\
\hline
\end{tabular}
\end{center}

%In joint work with Heinloth and Ng\^o , for any reductive group $\dG$ over $\Qlbar$, we constructed a $\dG$-local system $\Kl_{\dG}$ over $\PP^{1}_{\FF_{p}}-\{0,\infty\}$. The geometric monodromy of these local systems is quite large. For example, when $\dG$ is of type  $E_{7}, E_{8}, F_{4}$ or $G_{2}$, the monodromy is Zariski dense in $\dG$. When $\dG=\GL_{n}$ or $\Sp_{n}$, $\Kl_{\dG}$ is the same as the Kloosterman sheaf constructed by Deligne \cite{Deligne-ExpSum}; when $\dG=G_{2}$, $\Kl_{\dG}$ is the same as the $G_{2}$ example of Katz.  These generalized Kloosterman sheaves $\Kl_{\dG}$ give first examples of motivic local systems with Zariski dense monodromy in exceptional groups other than $G_{2}$. The associated Frobenius traces give new exponential sums generalizing the Kloosterman sums, and they are equidistributed according to the Sato-Tate measure for compact Lie groups of type $E_{7}, E_{8}, F_{4}$ or $G_{2}$.
%
%Our construction in \cite{HNY} was inspired by a result of B.Gross \cite{Gross-Prescribe} and the work of Frenkel and Gross \cite{FG} on rigid irregular connections. Gross showed in \cite{Gross-Prescribe} that there exists a unique automorphic representation of $G$ over the rational function field $F=k(t)$ whose local component at $0$ is a Steinberg representation and at $\infty$ is a simple supercuspidal representation. He then conjectured that when $G=\GL_{n}$, the Satake parameters of this automorphic representation should give the classical Kloosterman sums. Our work \cite{HNY} confirmed this conjecture and generalized it to other reductive groups.

\subsubsection{Motives over number fields with exceptional motivic Galois groups}\label{ss:Serre Q}
In early 1990s, Serre asked the following question \cite{Serre-motive}: Is there a motive over a number field whose motivic Galois group is of exceptional type such as $G_{2}$ or $E_{8}$?

A motive $M$ over a number field $K$ is, roughly speaking, part of the cohomology $\upH^{i}(X)$ for some (smooth projective) algebraic variety $X$ over $K$ and some integer $i$, which is cut out by geometric operations (such as group actions). For each prime $\ell$, the motive $M$ has the associated $\ell$-adic cohomology $\upH_{\ell}(M)\subset \upH^{i}(X_{\overline{K}},\Qlbar)$, which admits a Galois action:
\begin{equation*}
\rho_{M,\ell}:\Gal(\overline{K}/K)\to \GL(\upH_{\ell}(M))
\end{equation*}
The {\em $\ell$-adic motivic Galois group } $G_{M,\ell}$ of $M$ is the Zariski closure of the image of $\rho_{M,\ell}$. This is an algebraic group over $\Qlbar$. Since the motivic Galois groups that appear in the original question of Serre are only well-defined assuming the standard conjectures in algebraic geometry, we shall use the $\ell$-adic motivic Galois group as a working substitute for the actual motivic Galois group (conjecturally they should be isomorphic to each other). Classical groups appear as motivic Galois groups of abelian varieties.  This is why Serre raised the question for exceptional groups only. 

The $G_{2}$ case was answered affirmatively by Dettweiler and Reiter \cite{DR}.

In \cite{Y-motive}, we construct motivic local systems on $\PP^{1}_{\QQ}-\{0,1,\infty\}$ with Zariski dense monodromy in exceptional groups $E_{7},E_{8}$ and $G_{2}$ in a uniform way. They arise as eigen $\dG$-local systems from the \gaud  \  in Example \ref{ex:mot}. As a consequence of this construction, we give an affirmative answer to the $\ell$-adic version of Serre's question for $E_{7}, E_{8}$ and $G_{2}$. With a bit more work, one can also realize $F_{4}$ as a motivic Galois group over $\QQ$ (unpublished). 

Recently, $E_{6}$ has also been realized as a motivic Galois group by Boxer--Calegari--Emerton--Levin--Madapusi-Pera--Patrikis \cite{E6}.

We remark that in the case of $E_{8}$,  no connections are known between $E_{8}$-motives and Shimura varieties. The example given by the rigidity method in \cite{Y-motive} seems to be the only approach, even if one assumes knowledge about cohomology of Shimura varieties. 

%Usually people look for motives from Shimura varieties or abelian varieties. However, it is known that abelian varieties do not have exceptional motivic Galois groups; nor is there a Shimura variety of type $E_{8}$. To find an $E_{8}$-motive, at the very least, one  needs to find an algebraic variety over a number field $K$ whose cohomology in some degree has dimension at least $248$ (because the smallest nontrivial representation of $E_{8}$ has dimension $248$, the adjoint representation). Our construction gives a $248$-dimensional Galois representation in a natural way from the geometry of the algebraic group $E_{8}$ itself.

\subsubsection{Inverse Galois Problem}
The inverse Galois problem over $\QQ$ asks whether every finite group can be realized as the Galois group of some Galois extension $K/\QQ$. The problem is still open for many finite simple groups, especially those of Lie type.   

The same rigid local systems over $\PP^{1}_{\QQ}-\{0,1,\infty\}$ constructed to answer Serre's question can be used to solve new cases of the inverse Galois problem. We show in \cite{Y-motive} that for sufficiently large primes $\ell$, the finite simple groups $G_{2}(\FF_\ell)$ and $E_{8}(\FF_\ell)$ can be realized as Galois groups over $\QQ$. With a bit more work, one can show that $F_{4}(\FF_\ell)$ is  also a Galois group over $\QQ$. 

In inverse Galois theory, people use the ``rigidity method'' to prove certain finite groups $H$ are Galois groups over $\QQ$. This will be reviewed in the next subsection. In particular, the case of $G_{2}(\FF_{\ell})$ for all primes $\ell\geq5$ was known by the work of Thompson \cite{Thompson} and Feit and Fong \cite{FF}. However,  the case of $E_{8}(\FF_{\ell})$ was known previously only for primes $\ell$ satisfying a certain congruence condition modulo 31 (see the book of Malle and Matzat \cite[II.10]{MM} for a summary of what was known before). 

The $E_{8}$-local system contructed in \cite{Y-motive} also sheds some light to the rigidity method in inverse Galois theory. In fact, in \cite[Conjecture 5.16]{Y-motive} we suggest a rigid triple for $E_{8}(\FF_{\ell})$, which was subsequently proved by Guralnick and Malle \cite{GM}. Their result shows that $E_{8}(\FF_{\ell})$ is a Galois group over $\QQ$ for all primes $\ell>7$.

\subsection{Rigidity in the inverse Galois theory}
It is instructive to compare the notion of rigidity for local systems with the notion of a rigid tuple in inverse Galois theory. We give a quick review following \cite[Chapter 8]{Serre-Galois}. Let $H$ be a {\em finite} group with trivial center. 

\begin{defn} A tuple of conjugacy classes $(C_{1}, C_{2}, \cdots, C_{n})$ in $H$ is called {\em (strictly) rigid}, if
\begin{itemize}
\item The equation 
\begin{equation}\label{ggg}
g_{1}g_{2}\cdots g_{n}=1
\end{equation}
has a solution with $g_{i}\in C_{i}$, and the solution is unique up to simultaneous $H$-conjugacy;
\item For any solutions $(g_{1},\cdots, g_{n})$ of \eqref{ggg}, $\{g_{i}\}_{i=1,\cdots, n}$ generate $H$.
\end{itemize}
\end{defn}

The connection between rigid tuples and local systems is given by the following theorem. Let $S=\{P_{1},\cdots, P_{n}\}\subset\PP^{1}(\QQ)$, and let $U=\PP^{1}_{\QQ}-S$.

\begin{theorem}[Belyi, Fried, Matzat, Shih, and Thompson] Let $(C_{1},\cdots, C_{n})$ be a rigid tuple in $H$.   Then up to isomorphism there is a unique connected unramified Galois $H$-cover $\pi: Y\to U\otimes_{\QQ}{\Qbar}$ such that a topological generator of the (tame) inertia group at $P_{i}$ acts on $Y$ as an element in $C_{i}$.

Furthermore, if each $C_{i}$ is rational (i.e., $C_{i}$ takes rational values for all irreducible characters of $H$), then the $H$-cover $Y\to U\otimes_{\QQ}{\Qbar}$ is defined over $\QQ$. 
\end{theorem}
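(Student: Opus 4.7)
The plan is to first establish existence and uniqueness of $Y$ over $\Qbar$ by a topological/\'etale argument, then descend to $\QQ$ using the rationality of the conjugacy classes together with the hypothesis $Z(H)=1$.

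First, identify $\pi_{1}^{\top}(U(\CC),*)$ with the group presented by generators $\g_{1},\ldots,\g_{n}$ (small loops around $P_{1},\ldots,P_{n}$ oriented compatibly) and the single relation $\g_{1}\cdots\g_{n}=1$. A connected Galois $H$-cover of $U(\CC)$ whose tame inertia at $P_{i}$ lies in $C_{i}$ is the same datum, up to isomorphism of $H$-covers, as a surjection $\r:\pi_{1}^{\top}(U(\CC),*)\surj H$ with $\r(\g_{i})\in C_{i}$, modulo $H$-conjugation; equivalently, an $H$-conjugacy class of tuples $(g_{1},\ldots,g_{n})$ with $g_{i}\in C_{i}$, $g_{1}\cdots g_{n}=1$, and $\langle g_{1},\ldots,g_{n}\rangle=H$. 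The two clauses in the definition of a rigid tuple supply exactly this existence and uniqueness. By Grothendieck's comparison between topological and \'etale fundamental groups of smooth varieties over $\CC$, and the invariance of the \'etale fundamental group under algebraically closed base change, we conclude the asserted existence and uniqueness of $Y\to U\ot_{\QQ}\Qbar$ as a finite \'etale $H$-cover with the prescribed tame ramification.

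For descent to $\QQ$, the absolute Galois group $\GQ$ acts on the set of isomorphism classes of such covers by pullback along the action on $\Spec\Qbar$. For $\s\in\GQ$ write $Y^{\s}$ for the pullback; we check that $Y^{\s}$ still satisfies the local monodromy condition. The tame inertia at $P_{i}$ in $Y^{\s}$ is the $\chi(\s)$-th power of that in $Y$, where $\chi:\GQ\to\wh\ZZ^{\times}$ is the cyclotomic character: indeed, the tame inertia at $P_{i}$ is canonically $\varprojlim_{(m,p)=1}\mu_{m}(\Qbar)$, on which $\s$ acts via $\chi(\s)$. The rationality hypothesis says that $C_{i}$ is stable under $g\mapsto g^{m}$ for every integer $m$ coprime to $|H|$, so $Y^{\s}$ does satisfy the same local conditions as $Y$. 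Uniqueness from the previous paragraph then furnishes an isomorphism $\ph_{\s}:Y^{\s}\isom Y$ of $H$-covers over $U\ot_{\QQ}\Qbar$.

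To conclude, the collection $\{\ph_{\s}\}_{\s\in\GQ}$ must be upgraded to an effective descent datum. Each $\ph_{\s}$ is in fact unique: the group of self-automorphisms of $Y$ as a connected $H$-cover is the centralizer of the deck group $H$ inside $H$, namely $Z(H)$, which is trivial by assumption. Consequently the cocycle identity $\ph_{\s\t}=\ph_{\s}\c(\ph_{\t})^{\s}$ holds automatically. Since $Y\to U\ot_{\QQ}\Qbar$ is finite, hence quasi-projective, Galois descent along $\Qbar/\QQ$ is effective and produces a model of $Y$ over $\QQ$ carrying the descended $H$-action. The main obstacle in the program is this final descent step, and the trivial-center hypothesis is precisely what eliminates it: without $Z(H)=1$ each $\ph_{\s}$ would only be defined up to $Z(H)$, one would encounter an obstruction class in $\cohog{2}{\GQ,Z(H)}$, and in general the cover would only descend to a $Z(H)$-gerbe over $\QQ$ rather than to $\QQ$ itself.
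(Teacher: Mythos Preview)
Your argument is correct and is essentially the standard proof (as in Serre's \emph{Topics in Galois Theory}, which the paper cites). Note, however, that the paper itself does not supply a proof of this theorem: it is quoted as a classical result due to Belyi, Fried, Matzat, Shih, and Thompson, with no argument given in the text, so there is nothing to compare against on the paper's side.

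A couple of minor points worth tightening. First, when you invoke descent along $\Qbar/\QQ$, it is cleaner to observe that $Y$, being of finite type, is already defined over some finite Galois extension $K/\QQ$, and likewise each $\ph_{\s}$; the uniqueness of $\ph_{\s}$ (from $Z(H)=1$) then yields a genuine $\Gal(K/\QQ)$-descent datum, and effectivity follows because $Y\to U_{K}$ is finite, hence affine. Second, your assertion that rationality of $C_{i}$ implies $C_{i}^{m}=C_{i}$ for $(m,|H|)=1$ is correct but is itself a small lemma (the Galois action of $\Gal(\QQ(\zeta_{|H|})/\QQ)$ on characters versus on classes); you might say one word about it. Neither point affects the validity of your outline.
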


From the above theorem we see that the notion of a rigid tuple is an analog of physical rigidity for $H$-local systems when the algebraic group $H$ is a finite group.

Rigid tuples combined with the Hilbert irreducibility theorem solves the inverse Galois problem for $H$.
\begin{cor} Suppose there exists a rational rigid tuple in $H$, then $H$ can be realized as $\Gal(K/\QQ)$ for some Galois number field $K/\QQ$.
\end{cor}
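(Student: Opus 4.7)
The plan is to deduce the result by combining the Belyi--Fried--Matzat--Shih--Thompson theorem stated just above with Hilbert's irreducibility theorem. First, I would pick any finite subset $S = \{P_1, \dots, P_n\} \subset \PP^1(\QQ)$ indexed in the same way as the given rational rigid tuple $(C_1, \dots, C_n)$, and set $U = \PP^1_\QQ - S$. Since the $C_i$ are all rational, the cited theorem produces a connected unramified Galois $H$-cover $\pi : Y \to U$ defined over $\QQ$ (not merely over $\Qbar$), whose inertia at $P_i$ lies in $C_i$ and whose geometric Galois group is $H$.

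Next I would translate this into a statement about function fields. Let $F = \QQ(t)$ be the function field of $\PP^1_\QQ$ and let $E = \QQ(Y)$ be the function field of $Y$. Because $Y \to U$ is geometrically connected and Galois with group $H$ over $\QQ$, the extension $E/F$ is a regular Galois extension with $\Gal(E/F) \cong H$. (Regularity is exactly the assertion that $\QQ$ is algebraically closed in $E$, which follows from geometric connectedness of $Y$.)

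Finally I would apply Hilbert's irreducibility theorem to $E/F$: there exists a thin set of exceptional values of $t \in \QQ$, outside of which the specialization of the minimal polynomial of a primitive element of $E/F$ remains irreducible over $\QQ$ with Galois group isomorphic to $\Gal(E/F) = H$. Choosing any $t_0 \in \QQ$ outside this thin set and outside $S$, the fiber of $Y \to \PP^1_\QQ$ at $t_0$ is $\Spec K$ for a number field $K/\QQ$ with $\Gal(K/\QQ) \cong H$, which is what we wanted.

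There is essentially no hard step here once the cited theorem is granted: the only two things one needs to check are (i) that the $H$-cover is genuinely defined over $\QQ$ (this is exactly the content of the rationality clause of the theorem, which is why the hypothesis asks for a \emph{rational} rigid tuple) and (ii) that the geometric connectedness of $Y$ implies regularity of $E/F$, so that Hilbert irreducibility applies in its standard form. Neither presents any real obstacle.
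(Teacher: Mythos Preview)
Your proposal is correct and follows exactly the approach the paper indicates: the paper does not spell out a proof but simply remarks, just before stating the corollary, that ``Rigid tuples combined with the Hilbert irreducibility theorem solves the inverse Galois problem for $H$.'' Your argument is a faithful expansion of that sentence---use the rationality clause of the Belyi--Fried--Matzat--Shih--Thompson theorem to descend the cover to $\QQ$, observe that geometric connectedness gives a regular Galois extension of $\QQ(t)$ with group $H$, and then specialize via Hilbert irreducibility.
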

For a comprehensive survey of finite simple groups that are realized as  Galois groups over $\QQ$ using rigidity tuples, we refer the readers to the book \cite{MM} by Malle and Matzat.

\subsection{Further directions}

\sss{De Rham point of view}
There is a well-known analogy between $\ell$-adic local systems and connections (i.e., de Rham version of local systems) on algebraic curves. The rigidity method  can be adapted to the de Rham setting by working with $D$-modules on various moduli stacks over $\CC$, and one gets $\dG$-connections on $X-S$ as eigen-connections from Hecke eigen $D$-modules.

In \cite{FG}, the connection version of the Kloosterman local system was first constructed.  In \cite{Zhu}, it is shown that the $D$-module analogue of Hecke eigensheaf construction in \cite{HNY} indeed yields the Frenkel-Gross connection.  There are further works \cite{Ch} and \cite{XZ} in this direction.

\sss{Checking rigidity}
Typically, checking an \aud\ is weakly rigid involves going over certain double cosets (indexed by the affine Weyl group or its cosets) and computing $\Aut(\cE)$ for many $\cE$. In \cite{JY-mic}, we will give a simple criterion for weak rigidity inspired by the microlocal geometry of $\Bun_{G}(\bK_{S})$. Using this criterion, one can check that most of the euphotic automorphic data are weakly rigid.

\sss{Classification of rigid \aud}
Katz \cite{Katz} gave an algorithmic classification of tamely ramified rigid rank $n$ local systems on a punctured $\PP^{1}$ for all $n$. Arinkin \cite{Arin} extended Katz's algorithm to rigid connections with irregular singularities (de Rham version of wildly ramified rigid local systems).  It would be desirable to give a classification of rigid \aud\ for $G=\PGL_{n}$.

\sss{Computing monodromy}
Give a rigid \aud\, one can make predictions on the local monodromy of its Langlands parameter following the principles in \S\ref{sss:matching loc mono}. Verifying these predictions is only done in a small number of cases such as the Kloosterman \auda\ \cite{HNY}, and the tame ramification of the epipelagic \auda\ \cite{Y-Epi}. For example, proving the prediction on the slopes of the epipelagic \auda is an open question.

One the other hand, the global monodromy of the Langlands parameters (Zariski closure of the image of $\pi^{\geom}_{1}(X- S)$ in $\dG$) is unknown for many rigid \auda.

%%% reference %%%

\end{document}